\sloppy\pagestyle{plain}
\theoremstyle{definition}
\newtheorem{example}[equation]{Example}
\newtheorem{definition}[equation]{Definition}
\newtheorem{theorem}[equation]{Theorem}
\newtheorem{proposition}[equation]{Proposition}
\newtheorem{lemma}[equation]{Lemma}
\newtheorem{question}[equation]{Question}
\newtheorem{corollary}[equation]{Corollary}
\theoremstyle{remark}
\newtheorem{remark}[equation]{Remark}
\makeatletter\@addtoreset{equation}{section} \makeatother
\title{Worst singularities of plane curves of given degree}
\author{Ivan Cheltsov}
\keywords{Log canonical threshold, plane curve, GIT-stability,
$\alpha$-invariant of Tian, smooth surface.}
\subjclass[2010]{14H20, 14H50, 14J70 (primary), and 14E05, 14L24,
32Q20 (secondary)}
\begin{document}

\begin{abstract}
We prove that $\frac{2}{d}$, $\frac{2d-3}{(d-1)^2}$,
$\frac{2d-1}{d(d-1)}$, $\frac{2d-5}{d^2-3d+1}$ and
$\frac{2d-3}{d(d-2)}$ are the smallest log canonical thresholds of
reduced plane curves of degree $d\geqslant 3$, and we describe reduced
plane curves of degree $d$ whose log canonical thresholds are
these numbers. As an application, we prove that $\frac{2}{d}$, $\frac{2d-3}{(d-1)^2}$,
$\frac{2d-1}{d(d-1)}$, $\frac{2d-5}{d^2-3d+1}$ and
$\frac{2d-3}{d(d-2)}$  are the smallest values of the
$\alpha$-invariant of Tian of smooth surfaces in $\mathbb{P}^3$ of degree $d\geqslant 3$.
We also prove that every reduced plane curve of degree
$d\geqslant 4$ whose log canonical threshold is smaller than
$\frac{5}{2d}$ is GIT-unstable for the action of the group
$\mathrm{PGL}_3(\mathbb{C})$, and we describe GIT-semistable
reduced plane curves with log canonical thresholds~$\frac{5}{2d}$.
 \end{abstract}

\maketitle

All varieties are assumed to be algebraic, projective and defined
over $\mathbb{C}$.

\section{Introduction}
\label{section:intro}

Let $C_d$ be a \emph{reduced} plane curve in $\mathbb{P}^2$ of degree $d\geqslant 3$, and let $P$ be a point in $C_d$. 
The curve $C_d$ can have \emph{any} given plane curve singularity at $P$ provided that its degree $d$ is \emph{sufficiently} big. 
Thus, it is natural to ask

\begin{question}
\label{question:plane-curves-old} What is the \emph{worst} singularity that $C_d$ can have at $P$?
\end{question}

Denote by $m_P$ the multiplicity of the curve $C_d$ at the point
$P$, and denote by $\mu(P)$ the Milnor number of the point $P$. If
we use $m_P$ to measure the singularity of $C_d$ at the point $P$,
then a union of $d$ lines passing through $P$ is an answer to
Question~\ref{question:plane-curves-old}, since $m_P\leqslant d$,
and $m_P=d$ if and only if $C_d$ is a union of $d$ lines passing
through $P$. If we use the Milnor number $\mu(P)$, then the answer
would be the same, since $\mu(P)\leqslant(d-1)^2$, and
$\mu(P)=(d-1)^2$ if and only if $C_d$ is a union of $d$ lines
passing through $P$. Alternatively, we can use the number
$$
\mathrm{lct}_P\big(\mathbb{P}^2,C_d\big)=\mathrm{sup}\Big\{\lambda\in\mathbb{Q}\ \Big|\ \text{the log pair}\ \big(\mathbb{P}^2, \lambda C_d\big)\ \text{is log canonical at $P$}\Big\},%
$$
which is known as the \emph{log canonical threshold} of the log
pair $(\mathbb{P}^2, C_d)$ at the point $P$ or the log canonical
threshold of the curve $C_d$ at the point $P$ (see
\cite[Definition~6.34]{CoKoSm}). The smallest
$\mathrm{lct}_P(\mathbb{P}^2,C_d)$ when $P$ runs through all
points in $C_d$ is usually denoted by
$\mathrm{lct}(\mathbb{P}^2,C_d)$. Note that
$$
\frac{1}{m_P}\leqslant\mathrm{lct}_P\big(\mathbb{P}^2,C_d\big)\leqslant\frac{2}{m_P}.
$$
This is well-known (see, \cite[Exercise~6.18]{CoKoSm} and \cite[Lemma~6.35]{CoKoSm}). So, the smaller
$\mathrm{lct}_P(\mathbb{P}^2,C_d)$, the worse singularity of the
curve $C_d$ at the point $P$ is.

\begin{example}
\label{example:Kuwata} Suppose that $C_d$ is given by
$x_1^{n_1}x_2^{n_2}(x_1^{m_1}+x_2^{m_2})=0$
up to analytic change of local coordinates, where $m_1$ and
$m_2$ are non-negative integers, and $n_1,n_2\in\{0,1\}$. Then
$$
\mathrm{lct}_P\big(\mathbb{P}^2,C_d\big)=\mathrm{min}\Bigg\{1,\frac{\frac{1}{m_1}+\frac{1}{m_2}}{1+\frac{n_1}{m_1}+\frac{n_2}{m_2}}\Bigg\}%
$$
by \cite[Proposition~2.2]{Kuwata}.
\end{example}

Log canonical thresholds of plane curves have been intensively studied (see, for example, \cite{Kuwata}). 
Surprisingly, they give the same answer to Question~\ref{question:plane-curves-old} by

\begin{theorem}[{\cite[Theorem~4.1]{Che01}}]
\label{theorem:plane-curve-old} One has
$\mathrm{lct}_P(\mathbb{P}^2,C_d)\geqslant\frac{2}{d}$. Moreover,
\hbox{$\mathrm{lct}(\mathbb{P}^2,C_d)=\frac{2}{d}$} if and only if
$C_d$ is a union of $d$ lines that pass through $P$.
\end{theorem}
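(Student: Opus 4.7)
\emph{Plan.} My plan is to prove the lower bound via Nadel's vanishing and to handle the equality case by extracting a Koll\'ar component at the minimal log canonical center. The key identity is $K_{\mathbb{P}^2} + \lambda C_d \equiv (\lambda d - 3) H$ with $H$ a line, so $\mathcal{O}(e) - (K_{\mathbb{P}^2} + \lambda C_d) = \mathcal{O}(e + 3 - \lambda d)$ is ample whenever $e > \lambda d - 3$. Because $\lambda < 1$ rules out every irreducible component of the reduced curve $C_d$ from being a log canonical center, the multiplier ideal $\mathcal{J} := \mathcal{J}(\mathbb{P}^2, \lambda C_d)$ has zero-dimensional cosupport throughout the argument.

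For the lower bound, suppose by contradiction that some $\lambda < 2/d$ makes $(\mathbb{P}^2, \lambda C_d)$ fail to be log canonical (hence not KLT) at some point $P$. Then $\mathcal{J}_P \subsetneq \mathcal{O}_P$ and $\mathcal{O}/\mathcal{J}$ is a nonzero skyscraper sheaf. The choice $e = -1$ is admissible because $\lambda d - 3 < -1$, so Nadel's vanishing yields $H^1(\mathbb{P}^2, \mathcal{J} \otimes \mathcal{O}(-1)) = 0$; combined with the long exact sequence from $0 \to \mathcal{J}(-1) \to \mathcal{O}(-1) \to (\mathcal{O}/\mathcal{J})(-1) \to 0$ and $H^0(\mathbb{P}^2, \mathcal{O}(-1)) = 0$, this forces $H^0(\mathbb{P}^2, (\mathcal{O}/\mathcal{J})(-1)) = 0$. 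But a skyscraper sheaf is insensitive to twisting by a line bundle on global sections, so $\mathcal{O}/\mathcal{J} = 0$, contradicting $\mathcal{J}_P \subsetneq \mathcal{O}_P$.

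For the equality case, suppose $\mathrm{lct}(\mathbb{P}^2, C_d) = 2/d$ is achieved at $P$. The minimal log canonical center of $(\mathbb{P}^2, (2/d)\,C_d)$ is then $\{P\}$, so I can extract a Koll\'ar component: a birational morphism $\sigma : Y \to \mathbb{P}^2$ producing an irreducible exceptional divisor $F \cong \mathbb{P}^1$ over $P$ with $a_F(\mathbb{P}^2, (2/d)\,C_d) = -1$ and $(Y, \sigma_*^{-1}((2/d)\,C_d) + F)$ plt. Adjunction on $F$ reads $K_F + \mathrm{Diff}_F \equiv \sigma^*(K_{\mathbb{P}^2} + (2/d)\,C_d)|_F = 0$ (the right-hand side vanishes because $F$ is contracted to a point), so $\deg \mathrm{Diff}_F = 2$. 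With $\mathrm{Diff}_F = (2/d)\,\sigma_*^{-1}(C_d)|_F$ this reads $\sigma_*^{-1}(C_d) \cdot F = d$; combining with $\sigma^*(C_d) \cdot F = 0$ and $\sigma^* C_d = \sigma_*^{-1} C_d + \nu_F F$ yields $F^2 = -d/\nu_F$, and the log canonical place condition $a_F - (2/d)\,\nu_F = -1$ gives $\nu_F = (a_F + 1)\,d/2$, hence $F^2 = -2/(a_F + 1)$. The negative integer $F^2$ forces $a_F + 1 \in \{1, 2\}$; since $\mathbb{P}^2$ is smooth and every exceptional divisor over a smooth point has discrepancy at least $1$, I obtain $a_F = 1$, so $F$ is the exceptional of the single blow-up at $P$ and $m_P = \nu_F = d$. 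Finally, $\mathrm{mult}_P(C_d) = d = \deg(C_d)$ forces $C_d$ to coincide with its tangent cone at $P$ --- a union of $d$ lines through $P$ --- and reducedness of $C_d$ ensures these $d$ lines are distinct. The most delicate step is the integrality argument for $F^2$ that pins down $F$ as the first exceptional divisor; once $m_P = d$ is established, reducedness delivers the conclusion immediately.
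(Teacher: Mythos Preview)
The paper does not give its own proof of this statement: it is quoted as a known result from \cite{Che01} and \cite{FernexMustataEin}, so there is no in-paper argument to compare your proposal against. Your Nadel-vanishing argument for the inequality $\mathrm{lct}_P(\mathbb{P}^2,C_d)\geqslant 2/d$ is correct and is one of the standard proofs.

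Your treatment of the equality case, however, has a genuine gap. When you extract the Koll\'ar component $\sigma\colon Y\to\mathbb{P}^2$, the surface $Y$ is only guaranteed to be $\mathbb{Q}$-factorial with $(Y,F)$ plt; in dimension two this permits cyclic quotient singularities of $Y$ along $F$ (a weighted blow-up of a smooth point is the model example). Two of your steps silently assume $Y$ is smooth along $F$. First, adjunction for plt pairs gives
\[
\mathrm{Diff}_F\Big(\tfrac{2}{d}\,\sigma_*^{-1}C_d\Big)=\tfrac{2}{d}\,\sigma_*^{-1}C_d\big\vert_F+\sum_{Q\in\mathrm{Sing}(Y)\cap F}\Big(1-\tfrac{1}{n_Q}\Big)[Q]+\cdots,
\]
so from $\deg\mathrm{Diff}_F=2$ you only obtain $\sigma_*^{-1}C_d\cdot F\leqslant d$, not the equality you use. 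Second, on a merely $\mathbb{Q}$-factorial surface the number $F^2$ is a negative \emph{rational} number, so the integrality step that forces $a_F+1\in\{1,2\}$ is unjustified. Indeed, for a $(p,q)$-weighted blow-up one has $a_F=p+q-1$ and $F^2=-1/(pq)$, and a direct check shows that all of your displayed identities (with the orbifold term included) are consistent for \emph{every} coprime pair $(p,q)$, not just $(1,1)$; what eventually rules out $(p,q)\ne(1,1)$ is the constraint that $C_d$ be reduced of degree $d$, and your argument never brings this to bear on the extraction. The published proofs in \cite{Che01} and \cite{FernexMustataEin} avoid this difficulty altogether by arguing via semicontinuity of log canonical thresholds under degeneration to the tangent cone (equivalently, via inversion of adjunction on the exceptional divisor of the ordinary blow-up), rather than through an integrality constraint on a single extraction.
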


In this paper we want to address

\begin{question}
\label{question:plane-curves} What is the \emph{second worst}
singularity that $C_d$ can have at $P$?
\end{question}

To give a \emph{reasonable} answer to this question, we have to
disregard $m_P$ by obvious reasons. Thus, we will use the numbers
$\mu(P)$ and $\mathrm{lct}_P(\mathbb{P}^2,\mathbb{C}_d)$. For
cubic curves, they give the same answer.

\begin{example}
\label{example:plane-curve-cubic} Suppose that $d=3$, $m_P<3$ and
$P$ is a singular point of $C_3$. Then $P$ is a singular point of
type $\mathbb{A}_1$, $\mathbb{A}_2$ or $\mathbb{A}_3$. Moreover,
if $C_3$ has singularity of type $\mathbb{A}_3$ at $P$, then
$C_3=L+C_2$, where $C_2$ is a smooth conic, and $L$ is a line
tangent to $C_2$ at $P$. Furthermore, we have
$$
\mu(P)=\left\{%
\aligned
&1\ \text{if $C_3$ has $\mathbb{A}_1$ singularity at $P$},\\%
&2\ \text{if $C_3$ has $\mathbb{A}_2$ singularity at $P$},\\%
&3\ \text{if $C_3$ has $\mathbb{A}_3$ singularity at $P$}.\\%
\endaligned\right.%
$$
Similarly, we have
$$
\mathrm{lct}_P\big(\mathbb{P}^2,C_3\big)=\left\{%
\aligned
&1\ \text{if $C_3$ has $\mathbb{A}_1$ singularity at $P$},\\%
&\frac{5}{6}\ \text{if $C_3$ has $\mathbb{A}_2$ singularity at $P$},\\%
&\frac{3}{4}\ \text{if $C_3$ has $\mathbb{A}_3$ singularity at $P$}.\\%
\endaligned\right.%
$$
\end{example}

For quartic curves, the numbers $\mu(P)$ and
$\mathrm{lct}_P(\mathbb{P}^2,\mathbb{C}_d)$ give different answers
to Question~\ref{question:plane-curves}.

\begin{example}
\label{example:plane-curve-quartic} Suppose that $d=4$, $m_P<4$
and $P$ is a singular point of $C_4$. Going through the list of
all possible singularities that $C_P$ can have at $P$ (see, for
example, \cite{Hui}), we obtain
$$
\mu(P)=\left\{%
\aligned
&6\ \text{if $C_4$ has $\mathbb{D}_6$ singularity at $P$},\\%
&6\ \text{if $C_4$ has $\mathbb{A}_6$ singularity at $P$},\\%
&6\ \text{if $C_4$ has $\mathbb{E}_6$ singularity at $P$},\\%
&7\ \text{if $C_4$ has $\mathbb{A}_7$ singularity at $P$},\\%
&7\ \text{if $C_4$ has $\mathbb{E}_7$ singularity at $P$},\\%
\endaligned\right.%
$$
and $\mu(P)<6$ in all remaining cases. Similarly, we get
$$
\mathrm{lct}_P\big(\mathbb{P}^2,C_4\big)=\left\{%
\aligned
&\frac{5}{8}\ \text{if $C_4$ has $\mathbb{A}_7$ singularity at $P$},\\%
&\frac{5}{8}\ \text{if $C_4$ has $\mathbb{D}_5$ singularity at $P$},\\%
&\frac{3}{5}\ \text{if $C_4$ has $\mathbb{D}_6$ singularity at $P$},\\%
&\frac{7}{12}\ \text{if $C_4$ has $\mathbb{E}_6$ singularity at $P$},\\%
&\frac{5}{9}\ \text{if $C_4$ has $\mathbb{E}_7$ singularity at $P$},\\%
\endaligned\right.%
$$
and $\mathrm{lct}_P(\mathbb{P}^2,C_4)>\frac{5}{8}$ in all
remaining cases.
\end{example}

Recently, Arkadiusz P\l oski proved that
$\mu(P)\leqslant(d-1)^2-\lfloor\frac{d}{2}\rfloor$ provided that
$m_P<d$. Moreover, he described $C_d$ in the case when
$\mu(P)=(d-1)^2-\lfloor\frac{d}{2}\rfloor$. To present his
description, we need

\begin{definition}
\label{definition:Ploski-curve-even}
The curve $C_d$ is an \emph{even P\l oski} curve if $d$ is even,
the curve $C_d$ has $\frac{d}{2}\geqslant 2$ irreducible components that are smooth conics passing through $P$,
and all irreducible components of $C_d$ intersect each other pairwise at $P$ with multiplicity $4$.
The curve $C_d$ is an \emph{odd P\l oski} curve if $d$ is odd,
the curve $C_d$ has $\frac{d+1}{2}\geqslant 2$ irreducible components that all pass through $P$,
$\frac{d-1}{2}$ irreducible component of the curve $C_d$ are smooth conics that intersect each other pairwise at $P$
with multiplicity $4$, and the remaining irreducible component is a line in $\mathbb{P}^2$ that is tangent at $P$ to all other irreducible components.
We say that $C_d$ is \emph{P\l oski} curve if it is either an even P\l oski curve or an odd P\l oski curve.
\end{definition}

Each P\l oski curve has unique singular point. If $d=4$, then
$C_4$ is a P\l oski curve if and only if it has a singular point
of type $\mathbb{A}_7$. Thus, if $d=4$, then
$\mu(P)=(d-1)^2-\lfloor\frac{d}{2}\rfloor=7$ if and only if either
$C_4$ is a P\l oski curve and $P$ is its singular point or $C_4$
has singularity $\mathbb{E}_7$ at the point $P$ (see
Example~\ref{example:plane-curve-quartic}). For $d\geqslant 5$,
P\l oski proved

\begin{theorem}[{\cite[Theorem~1.4]{Ploski}}]
\label{theorem:Ploski} If $d\geqslant 5$, then
$\mu(P)=(d-1)^2-\lfloor\frac{d}{2}\rfloor$ if and only if $C_d$ is
a P\l oski curve and $P$ is its singular point.
\end{theorem}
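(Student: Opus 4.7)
For the ``if'' direction, I would simply compute $\mu(P)$ for a P\l oski curve directly, using the Milnor-$\delta$ formula
\[
\mu(C,P) = 2\delta(C,P) - r(C,P) + 1,
\]
where $r(C,P)$ denotes the number of analytic branches of $C$ at $P$, together with the decomposition
\[
\delta\bigl(C_{(1)} + \cdots + C_{(s)}, P\bigr) = \sum_{i} \delta(C_{(i)}, P) + \sum_{i<j} \bigl(C_{(i)} \cdot C_{(j)}\bigr)_P.
\]
Every component of a P\l oski curve is smooth at $P$, so each $\delta(C_{(i)},P) = 0$, and the prescribed intersection multiplicities are $4$ for two conics and $2$ for the line with a conic. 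Substitution then yields the claimed value of $\mu(P)$ in both parity cases.

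For the nontrivial direction, assume $m_P < d$ and $\mu(P) = (d-1)^2 - \lfloor d/2 \rfloor$. Decompose $C_d = C_{(1)} + \cdots + C_{(s)}$ into irreducible components of degrees $d_1, \ldots, d_s$ with $\sum d_i = d$, and (after discarding components not passing through $P$) assume each $C_{(i)}$ passes through $P$. From the identities above,
\[
\mu(P) = \sum_i \mu(C_{(i)}, P) + 2\sum_{i<j}\bigl(C_{(i)} \cdot C_{(j)}\bigr)_P + 1 - s.
\]
Applying the global $\delta$-bound $\delta(C_{(i)},P) \leq \binom{d_i-1}{2}$ (hence $\mu(C_{(i)}, P) \leq (d_i - 1)(d_i - 2)$) and local B\'ezout $(C_{(i)} \cdot C_{(j)})_P \leq d_i d_j$, a short manipulation (using $2\sum_{i<j} d_i d_j = d^2 - \sum d_i^2$) yields the clean bound $\mu(P) \leq d^2 - 3d + s + 1$. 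Equating with the assumed value forces $s \geq \lceil d/2 \rceil$, which together with $\sum d_i = d$ strongly restricts the admissible partitions.

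Pushing further, equality in the upper bound requires each $C_{(i)}$ to have all of its $\delta$-invariant concentrated at $P$ (so $C_{(i)}$ is rational with a unique singularity at $P$ if $d_i \geq 3$) and each pair $C_{(i)}, C_{(j)}$ to meet only at $P$ with full B\'ezout multiplicity $d_i d_j$. For $s = \lceil d/2 \rceil$ and the partition $(2, 2, \ldots, 2)$ in the even case or $(2, 2, \ldots, 2, 1)$ in the odd case, the saturation condition translates to: pairs of conics meet with multiplicity $4$, and the line is tangent to each conic. A detailed analysis of the local jets at $P$ then forces all conics to share a common tangent direction (and the line, in the odd case, to coincide with this direction), producing exactly a P\l oski curve.

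The main obstacle is ruling out partitions containing a component of degree at least $3$. Indeed, already for $d = 6$ the partition $(3, 2, 1)$ saturates the crude bound $d^2 - 3d + s + 1 = 22 = (d-1)^2 - d/2$, formally corresponding to a hypothetical configuration of a cuspidal cubic, a smooth conic, and a line pairwise meeting at $P$ with maximal intersection multiplicity. Eliminating such configurations requires a finer analysis---for instance, via the infinitely near multiplicity sequence obtained by successive blow-ups at $P$, or equivalently via Puiseux data---to show that the geometric constraints on simultaneously matching high-order contact between a cuspidal component and transversal smooth branches are incompatible with the degree bounds for $d \geq 5$. This is the technical core of P\l oski's argument in \cite{Ploski}.
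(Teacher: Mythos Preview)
The paper does not prove this theorem: it is quoted verbatim from \cite{Ploski} and used only as background to contrast P\l oski's Milnor-number answer to Question~\ref{question:plane-curves} with the log-canonical-threshold answer given by Theorem~\ref{theorem:plane-curve}. There is therefore no proof in the paper to compare your proposal against.

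As for the proposal itself, it is a reasonable outline of the standard strategy, and your computations for the ``if'' direction and the coarse bound $\mu(P)\leqslant d^2-3d+s+1$ are correct. Two remarks. First, the step ``after discarding components not passing through $P$, assume each $C_{(i)}$ passes through $P$'' is slightly loose: once you discard components you no longer have $\sum d_i=d$, and the bound must be rederived with $d'=\sum d_i\leqslant d$; one then checks separately that $d'<d$ already forces $\mu(P)$ below the target value, so in fact every component must pass through $P$. Second, and more importantly, you explicitly concede that the substantive step---ruling out partitions with a component of degree $\geqslant 3$, such as $(3,2,1)$ for $d=6$---is ``the technical core of P\l oski's argument in \cite{Ploski}''. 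So your proposal is not a self-contained proof but rather an annotated reduction to the original reference, which is exactly what the paper itself does by citing \cite{Ploski} without proof.
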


This result gives a \emph{very good} answer to Question~\ref{question:plane-curves}.
The main goal of this paper is to give an  answer to Question~\ref{question:plane-curves}.
using log canonical thresholds.
Namely, we will prove that
$$
\mathrm{lct}_P\big(\mathbb{P}^2,C_d\big)\geqslant\frac{2d-3}{(d-1)^2}
$$
provided that $m_P<d$, and we will describe $C_d$ in the case when
$\mathrm{lct}_P(\mathbb{P}^2,C_d)=\frac{2d-3}{(d-1)^2}$. To present this
description, we need

\begin{definition}
\label{definition:plane-curve-singularities} The curve $C_d$ has
singularity of type $\mathbb{T}_r$ (resp., $\mathbb{K}_r$,
$\widetilde{\mathbb{T}}_{r}$, $\widetilde{\mathbb{K}}_{r}$) at the
point $P$ if the curve $C_d$ can be given by $x_1^r=x_1x_2^r$
(resp., $x_1^{r}=x_2^{r+1}$, $x_2x_1^{r-1}=x_1x_2^{r}$,
$x_2x_1^{r-1}=x_2^{r+1}$) up to analytic change of  coordinates at the point $P$.
\end{definition}

Note that $\mathbb{T}_2=\mathbb{A}_3$, $\mathbb{K}_2=\mathbb{A}_2$,
$\widetilde{\mathbb{T}}_{2}=\widetilde{\mathbb{K}}_{2}=\mathbb{A}_1$,
$\widetilde{\mathbb{K}}_{3}=\mathbb{D}_5$,
$\widetilde{\mathbb{T}}_{3}=\mathbb{D}_6$,
$\mathbb{K}_3=\mathbb{E}_6$ and $\mathbb{T}_3=\mathbb{E}_7$.
Furthermore, since we assume that $d\geqslant 3$, the formula in Example~\ref{example:Kuwata} gives
$$
\mathrm{lct}_P\big(\mathbb{P}^2,C_d\big)=\left\{%
\aligned
&\frac{2d-3}{(d-1)^2}\ \text{if $C_d$ has $\mathbb{T}_{d-1}$ singularity at $P$},\\%
&\frac{2d-1}{d(d-1)}\ \text{if $C_d$ has $\mathbb{K}_{d-1}$ singularity at $P$},\\%
&\frac{2d-5}{d^2-3d+1}\ \text{if $C_d$ has $\widetilde{\mathbb{T}}_{d-1}$ singularity at $P$},\\%
&\frac{2d-3}{d(d-2)}\ \text{if $C$ has $\widetilde{\mathbb{K}}_{d-1}$ singularity at $P$},\\%
\endaligned\right.%
$$
where $\frac{2}{d}<\frac{2d-3}{(d-1)^2}<\frac{2d-1}{d(d-1)}<\frac{2d-5}{d^2-3d+1}\leqslant\frac{2d-3}{d(d-2)}$.
In this paper we will prove

\begin{theorem}
\label{theorem:plane-curve} Suppose that $d\geqslant 4$ and $\mathrm{lct}_P(\mathbb{P}^2,C_d)\leqslant\frac{2d-3}{d(d-2)}$.
Then one of the following holds:
\begin{enumerate}
\item $m_P=d$,
\item the curve $C_d$ has singularity of type $\mathbb{T}_{d-1}$, $\mathbb{K}_{d-1}$, $\widetilde{\mathbb{T}}_{d-1}$ or $\widetilde{\mathbb{K}}_{d-1}$ at the point $P$,
\item $d=4$ and $C_d$ is a P\l oski quartic curve (in this case $\mathrm{lct}_P(\mathbb{P}^2,C_d)=\frac{5}{8}$).
\end{enumerate}
\end{theorem}

This result describes the \emph{five worst} singularities that $C_d$ can have at the point $P$.
In particular, Theorem~\ref{theorem:plane-curve} answers Question~\ref{question:plane-curves}.
This answer is very different from the answer given by Theorem~\ref{theorem:Ploski}.
Indeed, if $C_d$ is a P\l oski curve and $P$ is its singular point, then the formula in
Example~\ref{example:Kuwata} gives
$$
\mathrm{lct}_P\big(\mathbb{P}^2,C_d\big)=\frac{5}{2d}>\frac{2d-3}{(d-1)^2}.
$$

The proof of Theorem~\ref{theorem:plane-curve} implies one result
that is interesting on its own. To describe it, let us identify
the curve $C_d$ with a point in the space
$|\mathcal{O}_{\mathbb{P}^2}(d)|$ that parameterizes all (not
necessarily reduced) plane curves of degree $d$. Since the group
$\mathrm{PGL}_3(\mathbb{C})$ acts on
$|\mathcal{O}_{\mathbb{P}^2}(d)|$, it is natural to ask whether
$C_d$ is GIT-stable (resp., GIT-semistable) for this action or not. 
For small $d$, its answer is classical and immediately follows from the Hilbert--Mumford criterion (see \cite[Chapter~2.1]{MFK}).

\begin{example}[{\cite[Chapter~4.2]{MFK}}]
\label{example:Mumford} If $d=3$, then $C_3$ is GIT-stable (resp.,
GIT-semistable) if and only if $C_3$ is smooth (resp., $C_3$ has at most
$\mathbb{A}_1$ singularities). If $d=4$, then $C_4$ is GIT-stable
(resp., GIT-semistable) if and only if $C_4$ has at most
$\mathbb{A}_1$ and $\mathbb{A}_2$ singularities (resp., $C_4$ has at
most singular double points and $C_4$ is not a union of a cubic
with an inflectional tangent line).
\end{example}

Paul Hacking, Hosung Kim and Yongnam Lee noticed that the log
canonical threshold $\mathrm{lct}(\mathbb{P}^2,C_d)$ and
GIT-stability of the curve $C_d$ are closely related. In particular, they proved

\begin{theorem}[{\cite[Propositions~10.2~and~10.4]{Hacking}, \cite[Theorem~2.3]{KimLee}}]
\label{theorem:KimLee} If
$\mathrm{lct}(\mathbb{P}^2,C_d)\geqslant\frac{3}{d}$, then the
curve $C_d$ is GIT-semistable. If $d\geqslant 4$ and
$\mathrm{lct}(\mathbb{P}^2,C_d)>\frac{3}{d}$, then the curve $C_d$
is GIT-stable.
\end{theorem}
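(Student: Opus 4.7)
My plan is to establish the contrapositive of each assertion by combining the Hilbert--Mumford numerical criterion with the divisorial log discrepancy along the exceptional divisor of a toric weighted blow-up of $\mathbb{P}^{2}$.

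First, if $C_{d}=\{F=0\}$ is not GIT-semistable (respectively, not GIT-stable), the Hilbert--Mumford criterion produces homogeneous coordinates $[x:y:z]$ and a non-trivial one-parameter subgroup of $\mathrm{SL}_{3}(\mathbb{C})$ acting diagonally with integer weights $(w_{0},w_{1},w_{2})$ satisfying $w_{0}+w_{1}+w_{2}=0$ and $w_{0}\geqslant w_{1}\geqslant w_{2}$, such that every monomial $x^{i}y^{j}z^{k}$ appearing in $F$ has weight $iw_{0}+jw_{1}+kw_{2}>0$ (respectively, $\geqslant 0$). Set $P=[0:0:1]$ and $(p,q)=(w_{0}-w_{2},w_{1}-w_{2})$; then $p\geqslant q\geqslant 0$ and $p+q=-3w_{2}>0$. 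Reducedness of $C_{d}$ rules out the degenerate case $q=0$: the weight condition would then force the exponent of $x$ in every monomial of $F$ to be at least $2$ (using $d\geqslant 4$ in the semistable case), contradicting $x^{2}\nmid F$.

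Next, after dividing $(p,q)$ by its greatest common divisor I may assume $\gcd(p,q)=1$ and perform the toric weighted blow-up $\pi\colon Y\to\mathbb{P}^{2}$ of $P$ with weights $(p,q)$ on the affine coordinates $(x/z,y/z)$. Standard toric computations give the irreducible exceptional divisor $E\subset Y$ together with
$$
K_{Y}=\pi^{*}K_{\mathbb{P}^{2}}+(p+q-1)E,\qquad \pi^{*}C_{d}=\widetilde{C}_{d}+mE,
$$
where $m=\min\{ip+jq\mid c_{i,j,d-i-j}\neq 0\}$ is the weighted multiplicity of $F(x,y,1)$ at the origin. Using $w_{0}+w_{1}+w_{2}=0$, the identity
$$
ip+jq=\bigl(iw_{0}+jw_{1}+kw_{2}\bigr)+\frac{d(p+q)}{3}
$$
holds for every monomial of degree $d$, so the Hilbert--Mumford weight condition translates into $m>d(p+q)/3$ (respectively, $m\geqslant d(p+q)/3$). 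The log canonicity test along $E$ then yields
$$
\mathrm{lct}\bigl(\mathbb{P}^{2},C_{d}\bigr)\leqslant\mathrm{lct}_{P}\bigl(\mathbb{P}^{2},C_{d}\bigr)\leqslant\frac{p+q}{m}<\frac{3}{d}
$$
in the unstable case, and $\leqslant 3/d$ in the strictly semistable case. Taking contrapositives proves both assertions.

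The main technical obstacle is the correct toric bookkeeping in the weighted blow-up: verifying the formula $K_{Y}=\pi^{*}K_{\mathbb{P}^{2}}+(p+q-1)E$ for the discrepancy, identifying the pull-back multiplicity of $C_d$ with the Hilbert--Mumford weight minimum, and checking that the normalization $\gcd(p,q)=1$ leaves the ratio $(p+q)/m$ unchanged. One must also be careful with the Hilbert--Mumford sign conventions so that the translation from monomial weights to weighted multiplicities has the correct direction. The role of the hypothesis $d\geqslant 4$ in the second assertion is only to render the strict inequality $\mathrm{lct}>3/d$ non-vacuous, since for $d=3$ any effective divisor has $\mathrm{lct}\leqslant 1=3/d$.
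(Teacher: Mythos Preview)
The paper does not give its own proof of this theorem; it is quoted from \cite{Hacking} and \cite{KimLee}, and only the converse direction (Lemma~\ref{lemma:stability}, the raw Hilbert--Mumford weight inequality) is used later. Your argument is correct and is essentially the standard proof from those references: translate a destabilising one-parameter subgroup into a monomial valuation and bound the log canonical threshold by the corresponding log discrepancy ratio. The identity $ip+jq=(iw_{0}+jw_{1}+kw_{2})+d(p+q)/3$ and the bound $\mathrm{lct}_{P}\leqslant (p+q)/m$ coming from the exceptional divisor of the $(p,q)$-weighted blow-up are exactly the mechanism in \cite[Theorem~2.3]{KimLee}.

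One small inconsistency: you correctly use $d\geqslant 4$ in the body of the proof to rule out the degenerate case $q=0$ for the ``not stable'' direction (with $q=0$ and $d=3$ the weight inequality only forces $x\mid F$, not $x^{2}\mid F$, so reducedness alone does not give a contradiction). Your closing remark that the hypothesis $d\geqslant 4$ serves \emph{only} to make $\mathrm{lct}>3/d$ non-vacuous therefore undersells its role; it is genuinely needed in your $q=0$ step, even though the $d=3$ statement happens to be vacuously true for the reason you give. This is a matter of exposition, not of validity.
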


This gives a \emph{sufficient} condition for the curve $C_d$ to be
GIT-stable (resp, GIT-semistable). However, this condition is not
a \emph{necessary} condition. Let us give two examples that
illustrate this.

\begin{example}[{\cite[p.~268]{Wall1996}, \cite[Example~10.5]{Hacking}}]
\label{example:WahlHacking} Suppose that $d=5$, the quintic curve
$C_5$ is given by
$$
x^5+\Big(y^2-xz\Big)^2\Big(\frac{x}{4}+y+z\Big)=x^2\Big(y^2-xz\Big)\Big(x+2y\Big),
$$
and $P=[0:0:1]$. Then $C_5$ is irreducible and has singularity
$\mathbb{A}_{12}$ at the point $P$. In particular, it is rational.
Furthermore, the curve $C_5$ is GIT-stable
(see, for example, \cite[Chapter~4.2]{MFK}). On the other hand, it
follows from Example~\ref{example:Kuwata} that
$$
\mathrm{lct}\big(\mathbb{P}^2,C_5\big)=\mathrm{lct}_P\big(\mathbb{P}^2,C_5\big)=\frac{1}{2}+\frac{1}{13}=\frac{15}{26}<\frac{3}{5}.
$$
\end{example}

\begin{example}
\label{example:Ploski} Suppose that $C_d$ is a P\l oski curve. Let
$P$ be its singular point, and let $L$ be a general line in
$\mathbb{P}^2$. Then
$$
\mathrm{lct}\big(\mathbb{P}^2,C_{d}+L\big)=\mathrm{lct}\big(\mathbb{P}^2,C_d\big)=\mathrm{lct}_P\big(\mathbb{P}^2,C_d\big)=\frac{5}{2d}<\frac{3}{d}.
$$
On the other hand, if $d$ is even, then $C_d$ is
GIT-semistable, and $C_d+L$ is GIT-stable. This follows from the
Hilbert--Mumford criterion. Similarly, if $d$ is odd, then $C_d$
is GIT-unstable, and $C_d+L$ is GIT-semistable.
\end{example}

In this paper we will prove the following result that complements Theorem~\ref{theorem:KimLee}.

\begin{theorem}
\label{theorem:plane-curve-stability} If
$\mathrm{lct}(\mathbb{P}^2,C_d)<\frac{5}{2d}$, then $C_d$ is
GIT-unstable. Moreover, if
$\mathrm{lct}(\mathbb{P}^2,C_d)\leqslant\frac{5}{2d}$, then $C_d$
is not GIT-stable. Furthermore, if
$\mathrm{lct}(\mathbb{P}^2,C_d)=\frac{5}{2d}$, then $C_d$ is
GIT-semistable if and only if $C_d$ is an even P\l oski curve.
\end{theorem}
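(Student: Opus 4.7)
The plan is to prove Theorem~\ref{theorem:plane-curve-stability} in tandem with Theorem~\ref{theorem:plane-curve}, by carrying out a combined local analysis of the defining equation of $C_d$ near a point $P$ at which $\mathrm{lct}_P(\mathbb{P}^2,C_d)$ attains its minimum. After applying an element of $\mathrm{PGL}_3(\mathbb{C})$, I may place $P=[0:0:1]$ and write the equation of $C_d$ as $F(x,y,z)=\sum_{i+j+k=d}c_{ijk}x^{i}y^{j}z^{k}$. I will apply the Hilbert--Mumford numerical criterion in the following form: for the 1-PS $\lambda(t)=\mathrm{diag}(t^{w_1},t^{w_2},t^{w_3})$ with $w_1+w_2+w_3=0$, the curve $C_d$ is $\lambda$-unstable whenever every monomial $x^{i}y^{j}z^{k}$ in $F$ has strictly positive weight $iw_1+jw_2+kw_3$, and $C_d$ fails to be $\lambda$-stable as soon as every such weight is merely non-negative.

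When $\mathrm{lct}_P(\mathbb{P}^2,C_d)\leq\tfrac{2d-3}{d(d-2)}$, Theorem~\ref{theorem:plane-curve} forces $F$ into the explicit normal form
\[
\alpha x^{d-1}z+\beta yx^{d-2}z\;=\;\gamma xy^{d-1}+\delta y^d+\sum_{i=2}^{d}a_i x^{i}y^{d-i}
\]
in one of the four cases $(\alpha,\beta,\gamma,\delta)\in\{(1,0,1,0),(1,0,0,1),(0,1,1,0),(0,1,0,1)\}$. In each such case I will test the 1-PS $\lambda(t)=\mathrm{diag}(t,t,t^{-2})$: the unique $z$-containing monomial of $F$, either $x^{d-1}z$ or $yx^{d-2}z$, receives weight $d-3$, while every remaining monomial $x^{i}y^{d-i}$ receives weight $d$, both strictly positive since $d\geq 4$. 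This proves GIT-instability whenever $\mathrm{lct}_P$ equals one of the four distinguished values $\tfrac{2d-3}{(d-1)^2}$, $\tfrac{2d-1}{d(d-1)}$, $\tfrac{2d-5}{d^2-3d+1}$, $\tfrac{2d-3}{d(d-2)}$. The only exception allowed by Theorem~\ref{theorem:plane-curve} is a P\l oski quartic in the case $d=4$ (where $\tfrac{5}{2d}=\tfrac{5}{8}$ coincides with $\tfrac{2d-3}{d(d-2)}$); for such a quartic I will instead apply the Hilbert--Mumford criterion directly to its explicit model as a product of two smooth conics tangent to order four at $P$.

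For $d\geq 5$ the interval $\bigl(\tfrac{2d-3}{d(d-2)},\tfrac{5}{2d}\bigr]$ is non-empty, and here lies the genuinely new content of Theorem~\ref{theorem:plane-curve-stability}. I plan to extend the Newton-polygon analysis of the proof of Theorem~\ref{theorem:plane-curve} one level deeper in the weighted blow-up at $P$, and use Example~\ref{example:Kuwata} to evaluate $\mathrm{lct}_P$ on every combinatorial type of singularity that can occur. In every case other than a P\l oski curve I will exhibit a diagonal 1-PS $\mathrm{diag}(t^{a},t^{b},t^{-a-b})$, adapted to the leading branches of $F$ at $P$, such that all monomials of $F$ receive strictly positive weight. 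For odd P\l oski curves the extra line component tangent to the conic components at $P$ yields such a 1-PS directly, so these are GIT-unstable. For even P\l oski curves I will use the explicit product form $F=\prod_{i=1}^{d/2}Q_i$ of $d/2$ pairwise osculating conics to check that every diagonal 1-PS leaves at least one monomial of $F$ with non-positive weight; this establishes GIT-semistability, and the 1-PS rescaling the common tangent against the normal direction realises equality, showing that even P\l oski curves are strictly semistable and hence not GIT-stable.

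The main difficulty will be the extended local classification in the range $\tfrac{2d-3}{d(d-2)}<\mathrm{lct}_P\leq\tfrac{5}{2d}$: Theorem~\ref{theorem:plane-curve} cannot be invoked directly here, so I will have to rerun its proof with looser upper bounds and catalogue the new possible branch configurations at $P$, such as several smooth branches sharing a tangent line or a smooth branch together with a higher-cuspidal branch. In each such configuration I must either locate a weighted blow-up forcing $\mathrm{lct}_P<\tfrac{5}{2d}$ (and then destabilize by a matching 1-PS) or recognise a P\l oski configuration. The final ingredient, GIT-semistability of even P\l oski curves, is itself a delicate Hilbert--Mumford computation on the product $\prod_{i=1}^{d/2}Q_i$, tying the pairwise fourth-order tangencies of the $Q_i$ at $P$ to the precise combinatorics of which monomial of $F$ survives under each diagonal 1-PS; this is where the sharp threshold $\tfrac{5}{2d}$ first appears naturally on the GIT side.
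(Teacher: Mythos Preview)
Your strategy differs fundamentally from the paper's. You propose to first classify all reduced curves with $\mathrm{lct}_P \leqslant \frac{5}{2d}$ and then, for each such curve other than an even P\l oski curve, exhibit a destabilizing one-parameter subgroup. The paper instead argues by contraposition: it assumes that $(\mathbb{P}^2,\lambda_2 C_d)$ is not Kawamata log terminal \emph{and} that $C_d$ is GIT-semistable (this is condition (\textbf{B}) in Section~\ref{section:plane-curves}), and then shows that $C_d$ must be an even P\l oski curve. The point is that the semistability hypothesis, fed back through the Hilbert--Mumford criterion (Lemma~\ref{lemma:stability}), yields upper bounds on the weighted multiplicities arising in the tower of blow-ups, for instance $m_0+m_1\leqslant d$ (Lemma~\ref{lemma:plane-curve-stability-m0-m1-d}) and $2m_0+m_1+m_2\leqslant\frac{5d}{3}$ (inside Lemma~\ref{lemma:plane-curve-P2-E1-E2}). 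These upper bounds, combined with the lower bounds supplied by the non-KLT assumption, squeeze $C_d$ directly into the P\l oski form without any intermediate classification of singularities.

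Your plan, by contrast, requires classifying all singularity types with $\mathrm{lct}_P\in\bigl(\frac{2d-3}{d(d-2)},\frac{5}{2d}\bigr]$, which is precisely the content of your final paragraph, and this is a genuine gap. You acknowledge it as ``the main difficulty'' and promise to ``rerun [the] proof with looser upper bounds and catalogue the new possible branch configurations,'' but you give no indication of what those configurations are, why the list is finite, or how to produce a matched destabilizing $1$-PS for each. There may well be many non-P\l oski curves with $\mathrm{lct}_P$ in this interval (odd P\l oski curves already sit at the endpoint), and you would need the normal form of each to destabilize it. The paper's contrapositive sidesteps all of this: it never needs to know what the unstable curves look like, because semistability is taken as a hypothesis and is exactly what furnishes the missing inequalities. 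As a minor additional point, proving that even P\l oski curves are GIT-semistable requires checking all one-parameter subgroups, not merely the diagonal ones in a fixed coordinate system; the paper simply appeals to Example~\ref{example:Ploski} for this fact rather than redoing the computation.
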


Example~\ref{example:Ploski} shows that this result is \emph{sharp}.
Surprisingly, its proof is very similar to the proof of Theorem~\ref{theorem:plane-curve}.
In fact, we will give a combined proof of both these theorems in Section~\ref{section:plane-curves}.

In this paper we will also prove one application of
Theorem~\ref{theorem:plane-curve}. To describe it, we need

\begin{definition}[{\cite[Appendix~A]{Tian2012}, \cite[Definition~1.20]{ChePark13}}]
\label{definition:alpha-function} For a given smooth variety $V$
equipped with an ample $\mathbb{Q}$-divisor $H_V$, let
$\alpha_{V}^{H_{V}}\colon V\to\mathbb{R}_{\geqslant 0}$ be a
function defined as
$$
\alpha_{V}^{H_{V}}(O)=\mathrm{sup}\left\{\lambda\in\mathbb{Q}\ \left|%
\aligned
&\text{the pair}\ \left(V, \lambda D_V\right)\ \text{is log canonical at $O$}\\
&\text{for every effective $\mathbb{Q}$-divisor}\ D_V\sim_{\mathbb{Q}} H_{V}\\
\endaligned\right.\right\}.%
$$
Denote its infimum by $\alpha(V,H_V)$.
\end{definition}

Let $S_d$ be a smooth surface in $\mathbb{P}^3$ of degree
$d\geqslant 3$, let $H_{S_d}$ be its hyperplane section, let $O$
be a point in $S_d$, and let $T_O$ be the hyperplane section of
$S_d$ that is singular at $O$. Similar to
$\mathrm{lct}_{P}(\mathbb{P}^2,C_d)$, we can define
$$
\mathrm{lct}_O\big(S_d,T_O\big)=\mathrm{sup}\Big\{\lambda\in\mathbb{Q}\ \Big|\ \text{the log pair}\ \big(S_d, \lambda T_O\big)\ \text{is log canonical at $O$}\Big\}.%
$$
Then $\alpha_{S_d}^{H_{S_d}}(O)\leqslant\mathrm{lct}_{O}(S_d,T_O)$ by Definition~\ref{definition:alpha-function}.
Note that $T_O$ is reduced, since the surface $S_d$ is smooth. In this paper we prove

\begin{theorem}
\label{theorem:main}  If
$\alpha_{S_d}^{H_{S_d}}(O)<\frac{2d-3}{d(d-2)}$, then
$$
\alpha_{S_d}^{H_{S_d}}(O)=\mathrm{lct}_{O}\big(S_d, T_O\big)\in\Bigg\{\frac{2}{d}, \frac{2d-3}{(d-1)^2}, \frac{2d-1}{d(d-1)}, \frac{2d-5}{d^2-3d+1}\Bigg\}.%
$$
Similarly, if $\alpha(S_d, H_{S_d})<\frac{2d-3}{d(d-2)}$, then
$$
\alpha\big(S_d, H_{S_d}\big)=\inf_{O\in S_d}\Big\{\mathrm{lct}_{O}\big(S_d, T_O\big)\Big\}\in\Bigg\{\frac{2}{d}, \frac{2d-3}{(d-1)^2}, \frac{2d-1}{d(d-1)}, \frac{2d-5}{d^2-3d+1}\Bigg\}.%
$$
\end{theorem}

If $d=3$, then we can drop the condition $\alpha_{S_d}^{H_{S_d}}(O)<\frac{2d-3}{d(d-2)}$ in Theorem~\ref{theorem:main}, since $\frac{2d-3}{d(d-2)}=1$ in this case.
Thus, Theorem~\ref{theorem:main} implies

\begin{corollary}[{\cite[Corollary~1.24]{ChePark13}}]
\label{corollary:cubic-surfaces} Suppose that $d=3$. Then $\alpha_{S_3}^{H_{S_3}}(O)=\mathrm{lct}_{O}(S_3, T_O)$.
\end{corollary}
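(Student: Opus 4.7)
My plan is to establish the easy inequality $\alpha_{S_{3}}^{H_{S_{3}}}(O)\leqslant\mathrm{lct}_{O}(S_{3},T_{O})$ directly from Definition~\ref{definition:alpha-function} (since $T_{O}$ is an effective divisor linearly equivalent to $H_{S_{3}}$), and then to attack the reverse inequality by a case split on whether $\alpha_{S_{3}}^{H_{S_{3}}}(O)$ is less than $1$ or at least $1$. The dichotomy is dictated by the shape of Theorem~\ref{theorem:main} evaluated at $d=3$.

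First I would specialize Theorem~\ref{theorem:main} to $d=3$: the cutoff $\frac{2d-3}{d(d-2)}$ collapses to $1$, and the four permissible values $\frac{2}{d}$, $\frac{2d-3}{(d-1)^{2}}$, $\frac{2d-1}{d(d-1)}$, $\frac{2d-5}{d^{2}-3d+1}$ specialize to $\frac{2}{3}$, $\frac{3}{4}$, $\frac{5}{6}$, $1$. Hence if $\alpha_{S_{3}}^{H_{S_{3}}}(O)<1$, Theorem~\ref{theorem:main} applies as stated and at once yields $\alpha_{S_{3}}^{H_{S_{3}}}(O)=\mathrm{lct}_{O}(S_{3},T_{O})$, with common value in $\{\frac{2}{3},\frac{3}{4},\frac{5}{6}\}$. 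This case requires no further work.

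In the remaining case $\alpha_{S_{3}}^{H_{S_{3}}}(O)\geqslant 1$ I would use the elementary fact that a reduced Cartier divisor $D$ through $O$ on a smooth surface $V$ satisfies $\mathrm{lct}_{O}(V,D)\leqslant 1$: if $D$ is smooth at $O$, one reads off $\mathrm{lct}_{O}(V,D)=1$ from the local description $D=\{x=0\}$, while if $D$ has multiplicity $m\geqslant 2$ at $O$, a single blow-up produces an exceptional divisor with log discrepancy $2-\lambda m$ against $\lambda D$, forcing $\mathrm{lct}_{O}(V,D)\leqslant 2/m\leqslant 1$. Applying this to the reduced hyperplane section $T_{O}$, which passes through $O$, gives $\mathrm{lct}_{O}(S_{3},T_{O})\leqslant 1$. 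Combined with the cheap inequality $\alpha_{S_{3}}^{H_{S_{3}}}(O)\leqslant\mathrm{lct}_{O}(S_{3},T_{O})$, this yields the sandwich
\[
1\leqslant\alpha_{S_{3}}^{H_{S_{3}}}(O)\leqslant\mathrm{lct}_{O}(S_{3},T_{O})\leqslant 1,
\]
so both quantities equal $1$ and the desired equality holds.

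I do not expect a substantive obstacle here: all the real work lives inside Theorem~\ref{theorem:main}, and this corollary is essentially its specialization at $d=3$ glued to the standard bound $\mathrm{lct}\leqslant 1$ for a reduced divisor through a point on a smooth surface. As a consistency check, the four values $\{\frac{2}{3},\frac{3}{4},\frac{5}{6},1\}$ produced by the specialization match exactly the list in Example~\ref{example:plane-curve-cubic} (three concurrent lines, and singularities of types $\mathbb{A}_{3}$, $\mathbb{A}_{2}$, $\mathbb{A}_{1}$), which is reassuring because $\mathrm{lct}_{O}(S_{3},T_{O})$ is computed as the log canonical threshold of the plane cubic $T_{O}$ at $O$ inside the hyperplane containing it.
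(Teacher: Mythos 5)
Your argument is correct and is exactly the intended derivation: the paper states this corollary as an immediate consequence of Theorem~\ref{theorem:main}, relying on the noted inequality $\alpha_{S_d}^{H_{S_d}}(O)\leqslant\mathrm{lct}_{O}(S_d,T_O)$ together with the fact that $\mathrm{lct}_{O}(S_3,T_O)\leqslant 1=\tfrac{2d-3}{d(d-2)}\big|_{d=3}$ for the reduced curve $T_O$ through $O$, which is precisely your case split. No gaps; your consistency check against Example~\ref{example:plane-curve-cubic} is also accurate.
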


If $d\geqslant 4$, we cannot drop the condition $\alpha_{S_d}^{H_{S_d}}(O)<\frac{2d-3}{d(d-2)}$ in Theorem~\ref{theorem:main} in general.
Let us give two examples that illustrate this.

\begin{example}
\label{example:lct-alpha-quartic} Suppose that $d=4$. Let $S_4$ be
a quartic surface in $\mathbb{P}^3$ that is given by
$$
t^3x+t^2yz+xyz(y+z)=0,
$$
and let $O$ be the point $[0:0:0:1]$. Then $S_4$ is smooth, and
$T_O$ has singularity $\mathbb{A}_1$ at $O$, which implies that
$\mathrm{lct}_{O}(S_4, T_O)=1$. Let $L_y$ be the line $x=y=0$, let
$L_z$ be the line $x=z=0$, and let $C_2$ be the conic
$y+z=xt+yz=0$. Then $L_y$, $L_z$ and $C_2$ are contained in $S_4$,
and $O=L_y\cap L_z\cap C_2$. Moreover,
$$
L_y+L_z+\frac{1}{2}C_2\sim 2H_{S_4},%
$$
because the divisor $2L_y+2L_z+C_2$ is cut out on $S_4$ by
$tx+yz=0$. Furthermore, the log pair
$(S_4,L_y+L_z+\frac{1}{2}C_2)$ is not log canonical at $O$, so that
$\alpha_{S_4}^{H_{S_4}}(O)<1$ by Definition~\ref{definition:alpha-function}.
\end{example}

\begin{example}
\label{example:lct-alpha} Suppose that $d\geqslant 5$ and $T_O$
has $\mathbb{A}_1$ singularity at $O$. Then $\mathrm{lct}_{O}(S_d,
T_O)=1$. Let $f\colon\widetilde{S}_d\to S_d$ be a blow up of the point
$O$. Denote by $E$ its exceptional curve. Then
$$
\Big(f^*(H_{S_d})-\frac{11}{5}E\Big)^2=5-\frac{121}{25}>0.
$$
Hence, it follows from Riemann--Roch theorem there is an integer
$n\geqslant 1$ such that the linear system $|f^*(5nH_{S_d})-11nE|$
is not empty. Pick a divisor $\widetilde{D}$ in this linear system,
and denote by $D$ its image on $S_d$. Then $(S_d,\frac{1}{5n}D)$
is not log canonical at $P$, since $\mathrm{mult}_{P}(D)\geqslant
11n$. On the other hand, $\frac{1}{5n}D\sim_{\mathbb{Q}} H_{S_d}$
by construction, so that $\alpha_{S_d}^{H_{_d}}(O)<1$ by Definition~\ref{definition:alpha-function}.
\end{example}

This work was was carried out during the author's stay at the Max Planck
Institute for Mathematics in Bonn in 2014. We would like to thank
the institute for the hospitality and very good working condition.
We would like to thank Michael Wemyss for checking the singularities of the curve $C_{5}$ in Example~\ref{example:WahlHacking}.
We would like to thank Alexandru Dimca, Yongnam Lee, Jihun Park, Hendrick S\"u\ss\ and Mikhail Zaidenberg for very useful comments.

\section{Preliminaries}
\label{section:preliminaries}

In this section, we present results that will be used in the proof of Theorems~\ref{theorem:plane-curve}, \ref{theorem:plane-curve-stability}, \ref{theorem:main}.
Let $S$ be a~smooth surface, let $D$ be an effective non-zero $\mathbb{Q}$-divisor on the surface $S$, and let $P$ be a point in the surface $S$.
Write
$$
D=\sum_{i=1}^{r}a_iC_i,
$$
where each $C_i$ is an irreducible curve on the surface $S$, and each $a_i$ is a non-negative rational number.
Let us recall

\begin{definition}[{\cite[\S~6]{CoKoSm}}]
\label{definition:lct-KLT} Let $\pi\colon\widetilde{S}\to S$ be a
birational morphism such that $\widetilde{S}$ is smooth. Then $\pi$ is
a composition of blow ups of smooth points.  For each $C_i$,
denote by $\widetilde{C}_i$ its proper transform on the surface
$\widetilde{S}$. Let $F_1,\ldots, F_n$ be $\pi$-exceptional curves.
Then
$$
K_{\widetilde{S}}+\sum_{i=1}^{r}a_i\widetilde{C}_i+\sum_{j=1}^{n}b_jF_j\sim_{\mathbb{Q}}\pi^{*}\big(K_{S}+D\big)
$$
for some rational numbers $b_1,\ldots,b_n$.
Suppose, in addition, that $\sum_{i=1}^r\widetilde{C}_i+\sum_{j=1}^n F_j$ is a divisor with simple normal crossings.
Then the log pair $(S,D)$ is said to be \emph{log canonical} at $P$ if and only if the following two conditions are satisfied:
\begin{itemize}
\item $a_i\leqslant 1$ for every $C_i$ such that $P\in C_i$,

\item $b_j\leqslant 1$ for every $F_j$ such that $\pi(F_j)=P$.
\end{itemize}
Similarly, the log pair $(S,D)$ is said to be \emph{Kawamata log
terminal} at $P$ if and only if $a_i<1$ for every $C_i$ such that
$P\in C_i$, and $b_j<1$ for every $F_j$ such that $\pi(F_j)=P$.
\end{definition}

Using just this definition, one can easily prove

\begin{lemma}
\label{lemma:three-curves} Suppose that $r=3$, $P\in C_1\cap
C_2\cap C_3$, the curves $C_1$, $C_2$ and $C_3$ are smooth at $P$,
$a_1<1$, $a_2<1$ and $a_3<1$. Moreover, suppose that both curves
$C_1$ and $C_2$ intersect the curve $C_3$ transversally at $P$.
Furthermore, suppose that $(S, D)$ is not Kawamata log terminal at
$P$. Put $k=\mathrm{mult}_{P}(C_1\cdot C_2)$. Then
$k(a_1+a_2)+a_3\geqslant k+1$.
\end{lemma}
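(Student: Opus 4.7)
The plan is to construct a log resolution of $(S, D)$ near $P$ by $k$ successive point blow-ups along the shared locus of $C_1$ and $C_2$, and then read off the discrepancies directly from the blow-up formula. Choose local analytic coordinates $(x, y)$ at $P$ in which $C_1 = \{y = 0\}$ and $C_3 = \{x = 0\}$; this is possible since $C_1$ and $C_3$ are smooth and meet transversally at $P$. Since $C_2$ is smooth, transverse to $C_3$, and satisfies $\mathrm{mult}_P(C_2 \cdot C_1) = k$, after an analytic change of the $y$-coordinate I may write $C_2 = \{y = x^k + \text{higher order terms}\}$.

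Let $\pi_1$ be the blow-up of $P_0 = P$ with exceptional curve $E_1$, and for $2 \leq i \leq k$ let $\pi_i$ be the blow-up of the unique point $P_{i-1} \in E_{i-1}$ through which both proper transforms of $C_1$ and $C_2$ pass. A direct computation in the chart given by $(x_{i-1}, y_{i-1}) = (x_i, x_i y_i)$ shows that the proper transform of $C_2$ at $P_i$ has local equation $y_i = x_i^{k-i} + \ldots$, that $C_3$ is separated from $C_1$ and $C_2$ after $\pi_1$ alone, and that the proper transform of $E_{i-1}$ does not pass through $P_i$ for $i \geq 2$. Hence after $k$ blow-ups the total transform of $D$ is simple normal crossing, giving a log resolution of $(S, D)$ near $P$. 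Since $P_{i-1}$ (for $i \geq 2$) lies only on $\tilde C_1$, $\tilde C_2$, and $E_{i-1}$, each with local multiplicity one, the discrepancy coefficients satisfy the recursion
$$
m_1 = a_1 + a_2 + a_3 - 1, \qquad m_i = a_1 + a_2 + m_{i-1} - 1 \ \text{for}\ 2 \leq i \leq k,
$$
which solves to $m_i = i(a_1 + a_2 - 1) + a_3$.

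By hypothesis $a_1, a_2, a_3 < 1$, so the failure of Kawamata log terminality at $P$ forces $m_i \geq 1$ for some $i$. If $a_1 + a_2 < 1$, the sequence $(m_i)$ is strictly decreasing, so $m_1 \geq 1$; but then $a_1 + a_2 + a_3 \geq 2$, and combined with $a_3 < 1$ this forces $a_1 + a_2 > 1$, a contradiction. Therefore $a_1 + a_2 \geq 1$, the sequence $(m_i)$ is non-decreasing, and $m_k \geq 1$. Since $m_k = k(a_1 + a_2) + a_3 - k$, this is exactly the desired inequality $k(a_1 + a_2) + a_3 \geq k + 1$. The only step requiring real care is the local coordinate verification that at each stage precisely the three curves $\tilde C_1$, $\tilde C_2$, $E_{i-1}$ pass through $P_{i-1}$ (so the recursion picks up $a_1 + a_2 + m_{i-1}$ rather than anything else) and that $k$ blow-ups suffice to achieve simple normal crossings over $P$; both follow routinely from the explicit form $y = x^k + \ldots$ of $C_2$.
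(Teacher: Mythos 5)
Your proof is correct and follows essentially the same route as the paper: the same tower of $k$ blow-ups at the successive intersection points of the proper transforms of $C_1$ and $C_2$, the same discrepancy formula $j(a_1+a_2)+a_3-j$, and the same monotonicity observation to pass from ``some $m_l\geqslant 1$'' to $m_k\geqslant 1$. Your case split on whether $a_1+a_2\geqslant 1$ is just a cleaner phrasing of the contradiction the paper derives from assuming $k(a_1+a_2)+a_3<k+1$.
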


\begin{proof}
Put $S_0=S$ and consider a sequence of blow ups
$$
\xymatrix{S_k\ar@{->}[rr]^{\pi_k}&& S_{k-1}\ar@{->}[rr]^{\pi_{k-1}}&& \cdots\ar@{->}[rr]^{\pi_3}&& S_2\ar@{->}[rr]^{\pi_2}&& S_1\ar@{->}[rr]^{\pi_1}&& S_0,}%
$$
where each $\pi_j$ is the blow up of the intersection point of the
proper transforms of the curves $C_1$ and $C_2$ on the surface
$S_{j-1}$ that dominates $P$ (such point exists, since
$k=\mathrm{mult}_{P}(C_1\cdot C_2)$). For each $\pi_j$, denote by
$E_j^k$ the proper transform of its exceptional curve on $S_k$.
For each $C_i$, denote by $C_i^k$ its proper transform on the
surface $S_{k}$. Then
$$
K_{S_{k}}+\sum_{i=1}^{n}a_iC_i^k+\sum_{j=1}^{k}\Big(j\big(a_1+a_2\big)+a_3-j\Big)E_j^k\sim_{\mathbb{Q}}(\pi_1\circ\pi_2\circ\cdots\circ\pi_k)^{*}\Big(K_{S}+D\Big),
$$
and $\sum_{i=1}^{n}C_i^k+\sum_{j=1}^{k}E_j$ is a simple normal
crossing divisor in every point of $\cup_{j=1}^{k}E_j$. Thus, it
follows from Definition~\ref{definition:lct-KLT} that there exists
$l\in\{1,\ldots,k\}$ such that $l(a_1+a_2)+a_3\geqslant l+1$,
because $(S, D)$ is not Kawamata log terminal at $P$. If $l=k$,
then we are done. So, we may assume that $l<k$. If
$k(a_1+a_2)+a_3<k+1$, then $a_1+a_2<1+\frac{1}{k}-a_3\frac{1}{k}$,
which implies that
$$
l+1\leqslant
l\big(a_1+a_2\big)+a_3<\Bigg(l+\frac{l}{k}-a_3\frac{l}{k}\Bigg)+a_3=l+\frac{l}{k}+a_3\Bigg(1-\frac{l}{k}\Bigg)\leqslant
l+\frac{l}{k}+\Bigg(1-\frac{l}{k}\Bigg)=l+1,
$$
because $a_3<1$. Thus, the obtained contradiction shows that $k(a_1+a_2)+a_3\geqslant k+1$.
\end{proof}

\begin{corollary}
\label{corollary:three-curves}
Suppose that $r=2$, $P\in C_1\cap C_2$, the curves $C_1$ and $C_2$ are smooth at $P$, $a_1<1$ and $a_2<1$.
Put $k=\mathrm{mult}_{P}(C_1\cdot C_2)$.
If $(S, D)$ is not Kawamata log terminal at $P$, then $k(a_1+a_2)\geqslant k+1$.
\end{corollary}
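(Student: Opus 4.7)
The plan is to reduce the corollary to Lemma~\ref{lemma:three-curves} by introducing an auxiliary third curve with arbitrarily small coefficient and then passing to the limit. Choose any smooth curve $C_3$ on $S$ that passes through $P$ and whose tangent line at $P$ is distinct from the tangent lines of $C_1$ and $C_2$ at $P$. Such a curve exists because the tangent space $T_PS$ is two-dimensional and contains infinitely many lines through the origin; concretely, one can take $C_3$ to be a smooth member of a sufficiently ample linear system passing through $P$ with a prescribed tangent direction (or, in purely local terms, any analytic smooth curve germ at $P$ transverse to both $C_1$ and $C_2$ will do). By construction both $C_1$ and $C_2$ intersect $C_3$ transversally at $P$.

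For each rational $\varepsilon\in(0,1)$, form the effective $\mathbb{Q}$-divisor
$$
D_{\varepsilon}=a_1C_1+a_2C_2+\varepsilon C_3.
$$
Since $D_{\varepsilon}\geqslant D$ as effective $\mathbb{Q}$-divisors and $(S,D)$ fails to be Kawamata log terminal at $P$, the same is true of $(S,D_{\varepsilon})$. The hypotheses of Lemma~\ref{lemma:three-curves} are now satisfied with $a_3=\varepsilon<1$, and $\mathrm{mult}_P(C_1\cdot C_2)=k$ is unaffected by the presence of $C_3$. Applying the lemma to $(S,D_{\varepsilon})$ yields
$$
k(a_1+a_2)+\varepsilon\geqslant k+1.
$$
Letting $\varepsilon\to 0^{+}$ through rational values gives the desired inequality $k(a_1+a_2)\geqslant k+1$.

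The only possible obstacle is the construction of the auxiliary curve $C_3$ with the required transversality, but on a smooth surface this is immediate, so no genuine difficulty arises. As a conceptual alternative, one can instead carry out the blow-up sequence in the proof of Lemma~\ref{lemma:three-curves} verbatim, omitting the $C_3$-term throughout (equivalently, setting $a_3=0$ in the discrepancy formula); the non-Kawamata-log-terminal hypothesis then forces $l(a_1+a_2)-l\geqslant 1$ for some $l\in\{1,\dots,k\}$, and the same elementary manipulation as at the end of the proof of Lemma~\ref{lemma:three-curves} promotes this to $k(a_1+a_2)\geqslant k+1$.
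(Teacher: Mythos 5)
Your argument is correct and is essentially the paper's intended derivation: the corollary is stated without proof precisely because it follows from Lemma~\ref{lemma:three-curves} applied to an auxiliary curve through $P$ transversal to $C_1$ and $C_2$. Your $\varepsilon$-perturbation and limit are harmless but unnecessary, since Lemma~\ref{lemma:three-curves} only requires the coefficients to be non-negative, so one may take $a_3=0$ directly (equivalently, your second, ``set $a_3=0$ in the blow-up computation'' alternative is the cleanest route).
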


The log pair $(S,D)$ is called \emph{log canonical} if it is log canonical at every point of $S$.
Similarly, the log pair $(S,D)$ is called \emph{Kawamata log terminal} if it is Kawamata log terminal at every point of the surface $S$.

\begin{remark}
\label{remark:convexity}
Let $R$ be any effective $\mathbb{Q}$-divisor on $S$ such that $R\sim_{\mathbb{Q}} D$ and $R\ne D$.
Put
$$
D_{\epsilon}=(1+\epsilon) D-\epsilon R,
$$
where $\epsilon$ is a non-negative rational number.
Then $D_{\epsilon}\sim_{\mathbb{Q}} D$.
Moreover, since $R\ne D$, there exists the greatest rational number $\epsilon_0\geqslant 0$ such that the divisor $D_{\epsilon_0}$ is effective.
Then $\mathrm{Supp}(D_{\epsilon_0})$ does not contain at least one irreducible component of $\mathrm{Supp}(R)$.
Moreover, if $(S,D)$ is not log canonical at $P$, and $(S,R)$ is log canonical at $P$,
then $(S,D_{\epsilon_0})$ is not log canonical at $P$ by
Definition~\ref{definition:lct-KLT}, because
$$
D=\frac{1}{1+\epsilon_0}D_{\epsilon_0}+\frac{\epsilon_0}{1+\epsilon_0}
R
$$
and $\frac{1}{1+\epsilon_0}+\frac{\epsilon_0}{1+\epsilon_0}=1$.
Similarly, if the log pair $(S,D)$ is not Kawamata log terminal at
$P$, and $(S,R)$ is Kawamata log terminal at $P$, then
$(S,D_{\epsilon_0})$ is not Kawamata log terminal at $P$.
\end{remark}

The following result is well-known.

\begin{lemma}[{\cite[Exercise~6.18]{CoKoSm}}]
\label{lemma:Skoda} If $(S,D)$ is not log canonical at $P$, then
$\mathrm{mult}_{P}(D)>1$. Similarly, if $(S,D)$ is not Kawamata log terminal at $P$, then
$\mathrm{mult}_{P}(D)\geqslant 1$.
\end{lemma}

Combining with

\begin{lemma}[{\cite[Lemma~5.36]{CoKoSm}}]
\label{lemma:Pukhlikov} Suppose that $S$ is a smooth surface in
$\mathbb{P}^3$, and $D\sim_{\mathbb{Q}} H_{S}$, where $H_S$ is a
hyperplane section of $S$. Then each $a_i$ does not exceed $1$.
\end{lemma}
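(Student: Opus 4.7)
The plan is to transfer the problem to $\mathbb{P}^3$ via the embedding $S \hookrightarrow \mathbb{P}^3$ and then bound each coefficient $a_i$ by a degree estimate on a generic line. Fix an irreducible component $C = C_i$ of $D$ with coefficient $a = a_i$, and let $s = \deg C$ denote the degree of $C$ in $\mathbb{P}^3$. I will first choose an integer $k \gg 0$ sufficiently divisible so that $kD$ is an effective Cartier divisor on $S$ lying in $|kH_S|$. Because smooth surfaces in $\mathbb{P}^3$ are projectively normal, $kD$ is cut out on $S$ by a homogeneous polynomial $F$ of degree $k$, i.e.\ $V(F) \cap S = kD$ as divisors on $S$. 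By the very choice of $F$, its restriction $F|_S$ vanishes along $C$ in $S$ to order exactly $ka$.

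The key step will be to arrange that $F$ itself vanishes along $C$ in $\mathbb{P}^3$ to order at least $ka$, and not just on $S$. Because $S$ is smooth along $C$, powers of the ideal sheaf restrict cleanly, $\mathcal{I}_{C,\mathbb{P}^3}^{ka}\cdot\mathcal{O}_S = \mathcal{I}_{C,S}^{ka}$, and from the short exact sequence
\[
0 \to \mathcal{I}_{C,\mathbb{P}^3}^{ka}(k-d) \xrightarrow{\,\cdot F_S\,} \mathcal{I}_{C,\mathbb{P}^3}^{ka}(k) \to \mathcal{I}_{C,S}^{ka}(k) \to 0
\]
(with $F_S$ the defining equation of $S$) the restriction map on $H^0$ is surjective as soon as $H^1\bigl(\mathbb{P}^3,\mathcal{I}_{C,\mathbb{P}^3}^{ka}(k-d)\bigr) = 0$, which I can arrange by taking $k$ sufficiently large (Serre vanishing for the sheaf in question). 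I will then replace $F$ by $F + G\cdot F_S$ for a suitable homogeneous polynomial $G$ of degree $k-d$, so that the new $F$ lies in $\mathcal{I}_{C,\mathbb{P}^3}^{ka}$.

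Finally, I will pick a general line $M \subset \mathbb{P}^3$ meeting $C$ transversally in exactly $s$ distinct points. At each such point $p$, transversality of $M$ to $C$ in $\mathbb{P}^3$ ensures that the restriction $\mathcal{I}_{C,\mathbb{P}^3}|_M$ coincides with the maximal ideal $\mathfrak{m}_p \subset \mathcal{O}_{M,p}$, so $F|_M$ lies in $\mathfrak{m}_p^{ka}$ and therefore vanishes at $p$ to order at least $ka$. Since $F|_M$ is a polynomial of degree $k$ on $M\cong\mathbb{P}^1$, the total order of vanishing is at most $k$, and summing the $s$ contributions yields $s\cdot ka \leq k$, hence $a \leq 1/s \leq 1$. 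I expect the principal obstacle to be the cohomology vanishing used in the lifting step; this is the technical heart of the argument and relies on both the freedom to choose $k$ arbitrarily large and the controlled asymptotic behavior of the powers of $\mathcal{I}_{C,\mathbb{P}^3}$.
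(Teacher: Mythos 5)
The paper does not prove this lemma; it quotes it from \cite{Pukhlikov} and \cite[Lemma~5.36]{CoKoSm}, so I compare your argument with the standard one. Your proof has a fatal gap at the lifting step: it is in general \emph{impossible} to arrange, by adding a multiple of $F_S$, that $F$ vanishes to order $ka$ along $C$ in $\mathbb{P}^3$ rather than merely on $S$. Concretely, let $S$ be a smooth quartic surface containing a twisted cubic $C$, so $H_S\cdot C=3$ and $C^2=-2$. Riemann--Roch on the K3 surface $S$ gives $\chi(9H_S-5C)=2+\tfrac{1}{2}(9H_S-5C)^2=4>0$, and $5C-9H_S$ has negative degree, so $9H_S-5C\sim A$ for some effective $A$; then $D=\tfrac{1}{9}(5C+A)\sim_{\mathbb{Q}}H_S$ has $a=\mathrm{mult}_C(D)\geqslant\tfrac{5}{9}>\tfrac{1}{2}$. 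If a degree-$k$ form $F'$ cutting out $kD$ on $S$ vanished to order $\geqslant ka$ along $C$ in $\mathbb{P}^3$, I could restrict it to a general secant line $M$ of $C$: the secant lines of the (non-degenerate) twisted cubic sweep out $\mathbb{P}^3$, so $F'\vert_M\not\equiv 0$, yet $F'\vert_M$ would vanish to total order $\geqslant 2ka>k$ at the two points of $M\cap C$. Hence no such $F'$ exists, and the lifting you want cannot be performed. This is also visible in the cohomological formulation: the vanishing you need is not Serre vanishing, because the sheaf $\mathcal{I}_{C,\mathbb{P}^3}^{ka}$ changes with $k$; one would need $H^1$ of powers of $\mathcal{I}_C$ twisted linearly in $k$ to vanish, which is governed by the asymptotic regularity of $\mathcal{I}_C^m$ and fails exactly in the regime of interest. (A smaller error: since $F_S$ itself lies in $\mathcal{I}_{C,\mathbb{P}^3}$, the kernel of $\mathcal{I}_{C,\mathbb{P}^3}^{ka}(k)\to\mathcal{I}_{C,S}^{ka}(k)$ contains $F_S\cdot\mathcal{I}_{C,\mathbb{P}^3}^{ka-1}(k-d)$, so your sequence is not exact as written.)

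The final counting step is also broken: a general line in $\mathbb{P}^3$ misses a curve entirely, and lines meeting $C$ in $s=\deg C$ distinct points exist in the required generality only when $C$ is planar, in which case they lie in the plane of $C$ and $F$ may restrict to zero on them. Indeed the conclusion $a\leqslant 1/s$ you would obtain is false: if $C$ is a conic on $S$ and $\Pi$ is its plane, then $D=\Pi\vert_S=C+R\sim H_S$ has $a=1>\tfrac12$. (The one-point version of your restriction argument would give $a\leqslant 1$, which is all that is needed, but only if the lifting worked.) The standard proof stays on $S$ and replaces your line by auxiliary curves cut out by cones: for a general $x\in\mathbb{P}^3$, the cone over $C$ with vertex $x$ cuts out $C+R_x$ on $S$ with $\deg R_x=s(d-1)$; the curves $R_x$ sweep out $S$, so a general $R_x$ shares no component with $D$, whence $s(d-1)=R_x\cdot D\geqslant a\,R_x\cdot C=a(s^2-C^2)$. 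Adjunction gives $C^2=2p_a(C)-2-(d-4)s$, and projecting $C$ birationally to a plane curve of degree $s$ gives $p_a(C)\leqslant\tfrac{(s-1)(s-2)}{2}$, so $R_x\cdot C\geqslant s(d-1)$ and $a\leqslant 1$.
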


Lemma~\ref{lemma:Skoda} gives

\begin{corollary}
\label{corollary:Pukhlikov} Suppose that $S$ is a smooth surface
in $\mathbb{P}^3$, and $D\sim_{\mathbb{Q}} H_{S}$, where $H_S$ is
a hyperplane section of $S$. Then $(S,D)$ is log canonical outside
of finitely many points.
\end{corollary}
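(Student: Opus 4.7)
The plan is to combine the two lemmas the corollary explicitly invokes. By Lemma~\ref{lemma:Skoda}, the locus $Z\subset S$ where $(S,D)$ fails to be log canonical is contained in $\{P\in S:\mathrm{mult}_P(D)>1\}$, so it suffices to prove that this multiplicity-$>1$ locus is finite. Finiteness should follow from the bound $a_i\leqslant 1$ supplied by Lemma~\ref{lemma:Pukhlikov}, together with the elementary observation that a general point of an irreducible curve on $S$ sees at most one irreducible component of $\mathrm{Supp}(D)$ and sees it with multiplicity exactly one.

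More concretely, write $D=\sum_{i=1}^{r}a_iC_i$ as in the setup of the section. I would argue by contradiction: assume that the locus $\{P:\mathrm{mult}_P(D)>1\}$ contains an irreducible curve $B\subset S$. If $B$ is not one of the components $C_i$, then $B\cap C_i$ is finite for every $i$, and at a general point $P$ of $B$ one has $\mathrm{mult}_P(D)=0$, contradicting $P\in Z$. If instead $B=C_{i_0}$ for some $i_0$, then at a general point $P$ of $B$ the curve $C_{i_0}$ is smooth and no other $C_j$ passes through $P$, so
\[
\mathrm{mult}_P(D)=\sum_{i=1}^{r}a_i\,\mathrm{mult}_P(C_i)=a_{i_0}\leqslant 1
\]
by Lemma~\ref{lemma:Pukhlikov}, again a contradiction. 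Hence the locus has dimension zero and is therefore a finite set of points.

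The argument is essentially a one-line synthesis of Lemmas~\ref{lemma:Skoda} and~\ref{lemma:Pukhlikov}, so I do not expect any real obstacle. The only mild subtlety is the standard \emph{general point} reasoning, which relies on the fact that any two distinct irreducible curves on a smooth projective surface meet in finitely many points, and on the bound $a_i\leqslant 1$ to prevent a single component from producing multiplicity $>1$ at its own generic points.
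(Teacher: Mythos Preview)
Your argument is correct and is exactly the synthesis the paper intends: the corollary is stated immediately after Lemmas~\ref{lemma:Skoda} and~\ref{lemma:Pukhlikov} with no proof beyond the phrase ``Combining with \ldots\ gives'', and you have supplied precisely the missing general-point reasoning that turns those two lemmas into the finiteness claim.
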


The following result is a special case of a much more general result, which is known as Shokurov's connectedness principle (see, for example, \cite[Theorem~6.3.2]{CoKoSm}).

\begin{lemma}[{\cite[Theorem~6.9]{Shokurov}}]
\label{lemma:Shokurov}
If $-(K_S+D)$ is big and nef, then the locus where $(S,D)$ is not Kawamata log terminal is connected.
\end{lemma}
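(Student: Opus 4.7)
The plan is to follow the original proof of Shokurov's connectedness principle via Kawamata--Viehweg vanishing, specialised to the setting of the lemma. The first step is to reduce to a simple normal crossings situation. Take a log resolution $\pi\colon\tilde S\to S$ of the pair $(S,D)$ and write
\[
\pi^{*}\bigl(K_S+D\bigr)=K_{\tilde S}+\sum_{i}c_{i}E_{i},
\]
where the $E_{i}$ are the proper transforms of the irreducible components of $D$ together with the $\pi$-exceptional curves, and $\sum_{i}E_{i}$ has simple normal crossings. By Definition~\ref{definition:lct-KLT}, the non-klt locus $N$ of $(S,D)$ equals $\pi\bigl(\bigcup_{c_{i}\ge 1}E_{i}\bigr)$, and since the continuous image of a connected set is connected, it suffices to prove that the reduced curve $\tilde N:=\bigcup_{c_{i}\ge 1}E_{i}$ is connected. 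Note that $-(K_{\tilde S}+\sum_{i}c_{i}E_{i})=-\pi^{*}(K_S+D)$ remains big and nef, since bigness and nefness are preserved by pullback under birational morphisms.

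The next step is to produce the relevant vanishing on $\tilde S$. Split $\sum_{i}c_{i}E_{i}=\tilde N+B$, so that $B$ has coefficients strictly less than $1$ on every component (possibly negative on exceptional curves where $(S,D)$ is already klt), and consider the integral $\pi$-exceptional divisor
\[
M:=\lceil-B\rceil.
\]
A direct check shows that $M$ is \emph{effective}, with positive coefficients only on exceptional curves where $c_{i}<0$, and that the $\mathbb{Q}$-divisor $F:=M+B$ has SNC support with coefficients in $[0,1)$. From the identity $K_{\tilde S}=\pi^{*}(K_S+D)-\tilde N-B$ one computes
\[
(M-\tilde N)-K_{\tilde S}-F=-\pi^{*}\bigl(K_{S}+D\bigr),
\]
which is big and nef, so Kawamata--Viehweg vanishing yields
\[
H^{1}\bigl(\tilde S,\mathcal O_{\tilde S}(M-\tilde N)\bigr)=0.
\]

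For the final step, consider the structure-sheaf sequence
\[
0\longrightarrow\mathcal O_{\tilde S}(M-\tilde N)\longrightarrow\mathcal O_{\tilde S}(M)\longrightarrow\mathcal O_{\tilde N}\bigl(M|_{\tilde N}\bigr)\longrightarrow 0.
\]
Because $M$ is effective and $\pi$-exceptional, $\pi_{*}\mathcal O_{\tilde S}(M)=\mathcal O_{S}$, and hence $H^{0}(\tilde S,\mathcal O_{\tilde S}(M))=H^{0}(S,\mathcal O_{S})$ is one-dimensional (assuming, as the context tacitly does, that $S$ is connected and projective). The long exact cohomology sequence combined with the vanishing above then forces $H^{0}(\tilde N,\mathcal O_{\tilde N}(M|_{\tilde N}))$ to be at most one-dimensional. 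Since $M|_{\tilde N}$ is effective, $H^{0}(\tilde N,\mathcal O_{\tilde N})$ injects into $H^{0}(\tilde N,\mathcal O_{\tilde N}(M|_{\tilde N}))$, so $\tilde N$ has at most one connected component, and therefore $N=\pi(\tilde N)$ is connected.

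The main technical obstacle is the rounding bookkeeping used in the second step: one must simultaneously verify that $M=\lceil-B\rceil$ is effective and $\pi$-exceptional and that $F=M+B$ has SNC support with coefficients genuinely lying in $[0,1)$. This requires a case analysis comparing the contributions of exceptional curves over klt points (where $B$ is negative and $M$ picks up positive coefficients) against strict transforms of $D$-components with coefficient in $(0,1)$ (which contribute $0$ to $M$); the verification is routine but is the technical heart of Shokurov's original argument.
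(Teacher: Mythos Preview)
The paper does not prove this lemma at all; it is quoted as \cite[Theorem~6.9]{Shokurov} and used as a black box, so there is no ``paper's proof'' to compare with. Your approach is the standard one---Kawamata--Viehweg vanishing combined with the short exact sequence for the non-klt locus---and the overall strategy is correct.

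There is, however, a genuine bookkeeping error in your second step. You set $\tilde N=\sum_{c_i\ge 1}E_i$ as a \emph{reduced} divisor and write $\sum_i c_iE_i=\tilde N+B$, then claim that $B$ has all coefficients strictly less than $1$ and that $M=\lceil -B\rceil$ is effective. This fails whenever some $c_i\ge 2$: the coefficient of such an $E_i$ in $B$ is $c_i-1\ge 1$, and hence its coefficient in $M$ is $\lceil 1-c_i\rceil\le -1$. Thus $M$ is not effective, and the key step $\pi_*\mathcal O_{\tilde S}(M)=\mathcal O_S$ breaks down. Coefficients $c_i\ge 2$ certainly occur in the situations where the paper applies the lemma (e.g.\ exceptional divisors over a point of high multiplicity).

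The standard fix is to replace the reduced $\tilde N$ by the full round-down on the non-klt side. Concretely, set $P=\lfloor\sum_i c_iE_i\rfloor$ and decompose $P=P^{+}-P^{-}$ into effective parts with no common components. Then $P^{-}$ is effective and $\pi$-exceptional (since $c_i<0$ forces $E_i$ exceptional), $P^{+}$ is effective with support exactly $\tilde N$, and $-P-K_{\tilde S}-\{\,\sum_i c_iE_i\,\}=-\pi^*(K_S+D)$ is big and nef. Kawamata--Viehweg gives $H^1(\tilde S,\mathcal O_{\tilde S}(P^{-}-P^{+}))=0$, and the sequence
\[
0\longrightarrow\mathcal O_{\tilde S}(P^{-}-P^{+})\longrightarrow\mathcal O_{\tilde S}(P^{-})\longrightarrow\mathcal O_{P^{+}}(P^{-})\longrightarrow 0
\]
now goes through exactly as you intended: $h^0(\mathcal O_{\tilde S}(P^{-}))=1$ since $P^{-}$ is effective exceptional, hence $h^0(\mathcal O_{P^{+}}(P^{-}))\le 1$, and since $P^{+}$ and $\tilde N$ have the same support this forces $\tilde N$ to be connected. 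Your last paragraph flags the rounding as ``routine'', but with the decomposition you wrote it is actually false; once you pass to $P^{\pm}$ the verification really is routine.
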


\begin{corollary}
\label{corollary:plane-curves-connectedness} Let $C_d$ be a
reduced curve in $\mathbb{P}^2$ of degree $d$, let $O$ and $Q$ be
two points in $C_d$ such that $O\ne Q$. If
$\mathrm{lct}_{O}(\mathbb{P}^2,C_d)<\frac{3}{d}$, then
$\mathrm{lct}_{Q}(\mathbb{P}^2,C_d)\geqslant\frac{3}{d}$.
\end{corollary}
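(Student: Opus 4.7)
My plan is to argue by contradiction via Shokurov's connectedness principle (Lemma~\ref{lemma:Shokurov}). Assuming that both $\mathrm{lct}_{O}(\mathbb{P}^2,C_d)$ and $\mathrm{lct}_{Q}(\mathbb{P}^2,C_d)$ are strictly less than $\frac{3}{d}$, I would pick a rational number $\lambda$ with
$$
\max\bigl\{\mathrm{lct}_{O}(\mathbb{P}^2,C_d),\ \mathrm{lct}_{Q}(\mathbb{P}^2,C_d)\bigr\}<\lambda<\frac{3}{d},
$$
so that the log pair $(\mathbb{P}^2,\lambda C_d)$ fails to be log canonical, and a fortiori fails to be Kawamata log terminal, at both $O$ and $Q$.

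The second step is to check the hypothesis of Lemma~\ref{lemma:Shokurov} for this pair. Writing $H$ for a line in $\mathbb{P}^2$, I have $C_d\sim dH$ and $-K_{\mathbb{P}^2}\sim 3H$, so
$$
-\bigl(K_{\mathbb{P}^2}+\lambda C_d\bigr)\sim_{\mathbb{Q}}\bigl(3-\lambda d\bigr)H,
$$
and the coefficient $3-\lambda d$ is strictly positive by the choice of $\lambda$. Hence $-(K_{\mathbb{P}^2}+\lambda C_d)$ is ample, in particular big and nef, and Lemma~\ref{lemma:Shokurov} guarantees that the locus on which $(\mathbb{P}^2,\lambda C_d)$ fails to be Kawamata log terminal is connected.

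For the final step I would observe that since $C_d$ is reduced and $\lambda<\frac{3}{d}\leqslant 1$ for $d\geqslant 3$, every irreducible component of $C_d$ appears in $\lambda C_d$ with coefficient exactly $\lambda<1$. By Definition~\ref{definition:lct-KLT} no irreducible component of $C_d$ can then belong to the non-Kawamata log terminal locus of $(\mathbb{P}^2,\lambda C_d)$, so this locus is zero-dimensional, i.e.\ a finite set of closed points. But it contains the two distinct points $O$ and $Q$, which disconnects it and contradicts the previous paragraph. No step is a genuine obstacle here: the entire content of the argument is the nefness check and the observation that the same inequality $\lambda<\frac{3}{d}$ that secures nefness also forces $\lambda<1$, ruling out any curve from the non-KLT locus.
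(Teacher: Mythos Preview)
Your argument is correct and is exactly the intended deduction: the paper states this result as an immediate corollary of Lemma~\ref{lemma:Shokurov} without further proof, and your write-up fills in precisely the routine verification (ampleness of $-(K_{\mathbb{P}^2}+\lambda C_d)$ from $\lambda<\frac{3}{d}$, and finiteness of the non-KLT locus from $\lambda<1$ since $d\geqslant 3$) that the paper leaves implicit.
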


Let $\pi_1\colon S_1\to S$ be a~blow up of the point $P$, and let $E_1$ be the $\pi_1$-exceptional curve.
Denote by $D^1$ the~proper transform of the divisor $D$ on the surface $S_1$ via $\pi_1$.
Then the log pair $(S_1, D^1+(\mathrm{mult}_{P}(D)-1)E_1)$ is often called \emph{the log pull back} of the log pair $(S,D)$,
because
$$
K_{S_1}+D^1+\Big(\mathrm{mult}_{P}(D)-1\Big)E_1\sim_{\mathbb{Q}}\pi_1^{*}\big(K_{S}+D\big).
$$
This $\mathbb{Q}$-rational equivalence implies that the log pair $(S,D)$ is not log canonical at $P$ provided that $\mathrm{mult}_{P}(D)>2$.
Similarly, if $\mathrm{mult}_{P}(D)\geqslant 2$, then the singularities of the log pair $(S,D)$ are not Kawamata log terminal at the point $P$.

\begin{remark}
\label{remark:log-pull-back}
The log pair $(S,D)$ is log canonical at $P$ if and only if $(S_1, D^1+(\mathrm{mult}_{P}(D)-1)E_1)$ is log canonical at every point of the curve $E_1$.
Similarly, the log pair $(S,D)$ is Kawamata log terminal at $P$ if and only if $(S_1, D^1+(\mathrm{mult}_{P}(D)-1)E_1)$ is Kawamata log terminal at every point of the curve $E_1$.
\end{remark}

Let $Z$ be an irreducible curve on $S$ that contains $P$. Suppose
that $Z$ is smooth at $P$, and $Z$ is not contained in
$\mathrm{Supp}(D)$. Let $\mu$ be a non-negative rational number.
The following result is a very special case of a much more general
result known as \emph{Inversion of Adjunction} (see, for example,
\cite[\S~3.4]{Shokurov} or \cite[Theorem~6.29]{CoKoSm}).

\begin{theorem}[{\cite[Corollary~3.12]{Shokurov}, \cite[Exercise~6.31]{CoKoSm}, \cite[Theorem~7]{Ch13}}]
\label{theorem:adjunction} Suppose that the log pair $(S,\mu Z+D)$
is not log canonical at $P$ and $\mu\leqslant 1$. Then
$\mathrm{mult}_{P}(D\cdot Z)>1$.
\end{theorem}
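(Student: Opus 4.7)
The plan is to reduce first to the case $\mu=1$ and then proceed by induction using a single blow-up of the point $P$. Since $\mu\leqslant 1$ gives $\mu Z+D\leqslant Z+D$ as $\mathbb{Q}$-divisors, if $(S,\mu Z+D)$ fails to be log canonical at $P$ so does $(S,Z+D)$, and it suffices to prove the stronger statement with coefficient $1$. I will argue by contradiction, assuming $\mathrm{mult}_P(D\cdot Z)\leqslant 1$. Since $Z$ is smooth at $P$ and not contained in $\mathrm{Supp}(D)$, this forces $\mathrm{mult}_P(D)\leqslant\mathrm{mult}_P(D\cdot Z)\leqslant 1$. The induction will be on the length of a minimal embedded log resolution of $(S,Z+D)$ at $P$, which is a finite nonnegative integer on a smooth surface and strictly decreases under the blow-up described below.

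Let $\pi\colon\tilde{S}\to S$ be the blow-up of $P$, with exceptional curve $E$, and let $\tilde Z$, $\tilde D$ denote proper transforms. Put $m=\mathrm{mult}_P(D)\leqslant 1$. The log pull-back formula
$$K_{\tilde S}+\tilde Z+\tilde D+mE\sim_{\mathbb{Q}}\pi^{*}(K_S+Z+D)$$
has coefficient $m\leqslant 1$ in front of $E$, so by Remark~\ref{remark:log-pull-back} the pulled-back pair fails to be log canonical at some point $P'\in E$; moreover $P'$ must lie on $\tilde Z\cup\tilde D$, because away from those loci the pair near a point of $E$ reduces to $(\tilde S,mE)$ with $m\leqslant 1$, which is log canonical. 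Split according to whether $P'=\tilde Z\cap E$ or not.

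If $P'\notin\tilde Z$, then $E$ is a smooth curve at $P'$ with coefficient $m\leqslant 1$ not contained in $\mathrm{Supp}(\tilde D)$, and $(\tilde S,mE+\tilde D)$ fails log canonicity at $P'$. The inductive hypothesis applied to $E$ yields $\mathrm{mult}_{P'}(\tilde D\cdot E)>1$. However, the projection formula gives $\tilde D\cdot E=m\leqslant 1$ globally, a direct contradiction, so this case does not occur. Hence $P'=\tilde Z\cap E$, and the inductive hypothesis applied to the pair $(\tilde S,\tilde Z+(\tilde D+mE))$ with the smooth curve $\tilde Z$ at $P'$ gives $\mathrm{mult}_{P'}((\tilde D+mE)\cdot\tilde Z)>1$. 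Since $\tilde Z$ and $E$ meet transversally at $P'$, this simplifies to $\mathrm{mult}_{P'}(\tilde D\cdot\tilde Z)>1-m$, whence
$$\mathrm{mult}_P(D\cdot Z)=m+\mathrm{mult}_{P'}(\tilde D\cdot\tilde Z)>1,$$
contradicting the assumption.

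The main subtlety is setting up the induction so that it is well-founded: one must ensure that after the single blow-up of $P$ the new pair, now centered at $P'$, has a strictly shorter minimal log resolution. This is automatic because the log resolution of $(S,Z+D)$ at $P$ factors through $\pi$, and the induced resolution of the strict transform at $P'$ uses one fewer blow-up. A secondary technical point is the treatment of possibly large coefficients of $\tilde D+mE$: the argument only needs the coefficient of the distinguished curve ($\tilde Z$ in Case~1, $E$ in Case~2) to be at most $1$, and both are, which is precisely why the reduction to $\mu=1$ at the start is safe under the inductive step.
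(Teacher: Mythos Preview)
The paper does not actually prove Theorem~\ref{theorem:adjunction}; it states the result with references to \cite{Shokurov}, \cite{CoKoSm} and \cite{Ch13} and moves on. So there is nothing in the paper to compare against, and your task is really to give a self-contained argument.

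Your proof is essentially correct and follows the standard inductive approach to surface inversion of adjunction. The reduction to $\mu=1$, the bound $\mathrm{mult}_P(D)\leqslant\mathrm{mult}_P(D\cdot Z)$, the log pull-back computation, and the two-case analysis at $P'$ are all sound. In the case $P'\notin\tilde Z$ you correctly use $\tilde D\cdot E=m$ to derive a contradiction, and in the case $P'=\tilde Z\cap E$ the blow-up formula $\mathrm{mult}_P(D\cdot Z)=m+\mathrm{mult}_{P'}(\tilde D\cdot\tilde Z)$ closes the argument.

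One omission worth patching: you never treat the base of the induction explicitly. When the pair $(S,Z+D)$ is already simple normal crossing at $P$, the non-log-canonical hypothesis forces a single component $C_i$ of $D$ through $P$, transversal to $Z$, with coefficient $a_i>1$; then $\mathrm{mult}_P(D\cdot Z)=a_i>1$ directly, contradicting your standing assumption. This is easy but should be said, since otherwise the induction has no anchor. A second small point: your claim that the minimal log resolution length strictly drops after one blow-up uses that any log resolution of a non-SNC surface pair at $P$ must begin by blowing up $P$; this is true but deserves one sentence of justification rather than being asserted.
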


This result implies

\begin{theorem}
\label{theorem:adjunction-2} Suppose that $(S,\mu Z+D)$ is not
Kawamata log terminal at $P$, and $\mu<1$. Then $\mathrm{mult}_{P}(D\cdot Z)>1$.
\end{theorem}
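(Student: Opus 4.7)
The approach is to reduce Theorem~\ref{theorem:adjunction-2} to Theorem~\ref{theorem:adjunction} by a small perturbation of the coefficient $\mu$. First I would observe that the hypothesis of Kawamata log terminality in a punctured neighborhood forces $\mu<1$: if $\mu\geqslant 1$, then at every point $Q\in Z$ near $P$ with $Q\ne P$ the coefficient of the smooth curve $Z$ at $Q$ in the pair $(S,\mu Z+D)$ is at least $1$, violating Kawamata log terminality at $Q$. In particular $\mu\leqslant 1$, so Theorem~\ref{theorem:adjunction} can be applied either to $(S,\mu Z+D)$ itself or to a slight perturbation.

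Next I would split into two cases. If $(S,\mu Z+D)$ is not log canonical at $P$, then Theorem~\ref{theorem:adjunction} directly gives $\mathrm{mult}_P(D\cdot Z)>1$. Otherwise $(S,\mu Z+D)$ is log canonical but not Kawamata log terminal at $P$. Fixing a log resolution $\pi\colon\tilde S\to S$ of $(S,Z+D)$ and writing
$$
\pi^{*}\bigl(K_S+\mu Z+D\bigr)\sim_{\mathbb{Q}} K_{\tilde S}+\mu\tilde Z+\tilde D+\sum_{j}b_jF_j,
$$
I would extract an exceptional divisor $F$ with coefficient $b_F=1$ and $\pi(F)=P$. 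The existence of such $F$ uses that $\mu<1$ and $Z\not\subset\mathrm{Supp}(D)$: these together rule out the possibility that the failure of Kawamata log terminality comes from a coefficient equal to $1$ on a curve through $P$, since such a curve would violate the punctured-neighborhood hypothesis (each of its points near $P$ would fail Kawamata log terminality).

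For the final step I would pick a small rational $\epsilon>0$ with $\mu+\epsilon\leqslant 1$ and consider the perturbed pair $(S,(\mu+\epsilon)Z+D)$. Since $F$ is centered at $P\in Z$ and $Z$ is smooth at $P$, a direct pull-back computation yields $v_F(Z)\geqslant 1$, and the new coefficient of $F$ is
$$
b_F'=b_F+\epsilon\cdot v_F(Z)=1+\epsilon\cdot v_F(Z)>1.
$$
Thus $(S,(\mu+\epsilon)Z+D)$ fails to be log canonical at $P$, and Theorem~\ref{theorem:adjunction} applied to it with coefficient $\mu+\epsilon\leqslant 1$ yields the desired inequality $\mathrm{mult}_P(D\cdot Z)>1$ (the intersection number is unchanged by the perturbation).

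The main delicacy is in the log-canonical-but-not-Kawamata-log-terminal case: one must justify that the failure of Kawamata log terminality at $P$ is witnessed by an exceptional divisor whose center is the point $P$ itself rather than a curve through $P$, and that the valuation of this divisor along $Z$ is strictly positive. Both points follow from the punctured-neighborhood hypothesis together with standard valuation theory, but they are the key places where one must be careful.
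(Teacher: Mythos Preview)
Your proposal is correct and follows essentially the same idea as the paper, but the paper's execution is more direct. Instead of splitting into cases and perturbing to $\mu+\epsilon$, the paper simply observes that since $\mu<1$ (by your same punctured-neighborhood argument), the pair $(S, Z+D)$ with coefficient exactly $1$ on $Z$ is not log canonical at $P$: any divisor $E$ witnessing the failure of Kawamata log terminality of $(S,\mu Z+D)$ at $P$ must have center equal to $\{P\}$ (by the punctured-neighborhood hypothesis), hence $\mathrm{ord}_E(Z)\geqslant 1$, so passing from $\mu Z$ to $Z$ strictly decreases the log discrepancy of $E$ below zero. Then Theorem~\ref{theorem:adjunction} applies with coefficient $1$ and gives $\mathrm{mult}_P(D\cdot Z)>1$ immediately.

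Your version spells out more of the valuation-theoretic justification, which is helpful, but the case split and the $\epsilon$-perturbation are unnecessary: jumping straight to coefficient $1$ handles both cases at once.
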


\begin{proof}
The log pair $(S, Z+D)$ is
not log canonical at $P$, because $\mu<1$, and $(S,\mu Z+D)$ is not Kawamata
log terminal at $P$. Then $\mathrm{mult}_{P}(D\cdot Z)>1$ by
Theorem~\ref{theorem:adjunction}.
\end{proof}

Theorems~\ref{theorem:adjunction} and \ref{theorem:adjunction-2}
imply

\begin{lemma}
\label{lemma:log-pull-back} If $(S,D)$ is not log canonical at $P$
and $\mathrm{mult}_{P}(D)\leqslant 2$, then there exists a
\emph{unique} point in $E_1$ such that $(S_1,
D^1+(\mathrm{mult}_{P}(D)-1)E_1)$ is not log canonical at it.
Similarly, if $(S,D)$ is not Kawamata log terminal at $P$,
and $\mathrm{mult}_{P}(D)<2$, then there exists a
\emph{unique} point in $E_1$ such that $(S_1,
D^1+(\mathrm{mult}_{P}(D)-1)E_1)$ is not Kawamata log terminal at
it.
\end{lemma}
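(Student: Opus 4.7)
The plan is to use the intersection-theoretic identity $D^1\cdot E_1=\mathrm{mult}_P(D)$ together with Inversion of Adjunction (Theorems~\ref{theorem:adjunction} and~\ref{theorem:adjunction-2}) applied with $Z=E_1$ inside the log pull back. Existence of a bad point on $E_1$ in both halves of the statement is immediate from Remark~\ref{remark:log-pull-back}, so the substantive content is uniqueness, which I expect to reduce to the inequality $D^1\cdot E_1\leqslant 2$ (resp.\ $<2$) together with the fact that each non log canonical (resp.\ non Kawamata log terminal) point of $E_1$ forces a local intersection contribution strictly greater than $1$.

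Set $m=\mathrm{mult}_P(D)$. From $\pi_1^{*}D=D^1+mE_1$ and $E_1^2=-1$ together with the projection formula, one has $D^1\cdot E_1=m$. For the log canonical half, suppose toward contradiction that $(S_1,D^1+(m-1)E_1)$ fails to be log canonical at two distinct points $Q_1,Q_2\in E_1$. Since $m\leqslant 2$, the coefficient $m-1$ of $E_1$ in the log pull back is at most $1$, and $E_1$ is smooth and not contained in $\mathrm{Supp}(D^1)$, so Theorem~\ref{theorem:adjunction} applies at each $Q_i$ with $Z=E_1$ and $\mu=m-1$, yielding $\mathrm{mult}_{Q_i}(D^1\cdot E_1)>1$. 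Then
\[
m=D^1\cdot E_1\geqslant\mathrm{mult}_{Q_1}\bigl(D^1\cdot E_1\bigr)+\mathrm{mult}_{Q_2}\bigl(D^1\cdot E_1\bigr)>2,
\]
contradicting $m\leqslant 2$. Combined with Remark~\ref{remark:log-pull-back} this proves the first assertion.

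For the Kawamata log terminal half I proceed analogously, but first I have to verify that Theorem~\ref{theorem:adjunction-2} actually applies at a candidate bad point $Q\in E_1$, i.e.\ that $(S_1,D^1+(m-1)E_1)$ is Kawamata log terminal in a punctured neighborhood of $Q$. Outside $E_1$ this is inherited from the hypothesis that $(S,D)$ is Kawamata log terminal in a punctured neighborhood of $P$, since $\pi_1$ is an isomorphism there. Along $E_1$, at a point $Q'\in E_1$ lying off the finite set $D^1\cap E_1$, only $E_1$ meets $Q'$ in the support, and the pair $(S_1,(m-1)E_1)$ is Kawamata log terminal at $Q'$ because $m-1<1$; this is precisely where the strict inequality $m<2$ is used. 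Hence the non Kawamata log terminal locus of the log pull back along $E_1$ is a finite set, so any such point is isolated in it.

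With this verification, assume two distinct non Kawamata log terminal points $Q_1,Q_2\in E_1$. Apply Theorem~\ref{theorem:adjunction-2} at each (with $Z=E_1$ and $\mu=m-1$) to get $\mathrm{mult}_{Q_i}(D^1\cdot E_1)>1$, and conclude $m=D^1\cdot E_1>2$, contradicting $m<2$. Existence of at least one non Kawamata log terminal point of the log pull back on $E_1$ follows from Remark~\ref{remark:log-pull-back}. The main obstacle I anticipate is precisely the punctured neighborhood check in the Kawamata log terminal case, since Theorem~\ref{theorem:adjunction-2} has this hypothesis built in; the generic point argument on $E_1$ using $m-1<1$ is exactly what forces the slightly stronger assumption $m<2$ compared with the log canonical case.
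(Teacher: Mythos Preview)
Your proof is correct and follows essentially the same approach as the paper: use $D^1\cdot E_1=\mathrm{mult}_P(D)$, apply Theorem~\ref{theorem:adjunction} (resp.\ Theorem~\ref{theorem:adjunction-2}) at two putative bad points on $E_1$ to force the contradiction $\mathrm{mult}_P(D)>2$, and invoke Remark~\ref{remark:log-pull-back} for existence. The paper simply writes ``Similarly, I can prove the second assertion using Theorem~\ref{theorem:adjunction-2} instead of Theorem~\ref{theorem:adjunction}'' without spelling out the punctured-neighborhood check you carried out, so your version is slightly more detailed but not methodologically different.
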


\begin{proof}
If $\mathrm{mult}_{P}(D)\leqslant 2$ and $(S_1, D^1+(\lambda
\mathrm{mult}_{P}(D)-1)E_1)$ is not log canonical at two distinct
points $P_1$ and $\widetilde{P}_1$, then
$$
2\geqslant\mathrm{mult}_{P}\big(D\big)=D^1\cdot E_1\geqslant\mathrm{mult}_{P_1}\Big(D^1\cdot E_1\Big)+\mathrm{mult}_{\widetilde{P}_1}\Big(D^1\cdot E_1\Big)>2%
$$
by Theorem~\ref{theorem:adjunction}. By
Remark~\ref{remark:log-pull-back}, this proves the first
assertion. Similarly, we can prove the second assertion using
Theorem~\ref{theorem:adjunction-2} instead of
Theorem~\ref{theorem:adjunction}.
\end{proof}

The following result can be proved similarly to the proof of
Lemma~\ref{lemma:Skoda}. Let us show how to prove it using
Theorem~\ref{theorem:adjunction-2}.

\begin{lemma}
\label{lemma:Skoda-2} Suppose that $(S,D)$ is not Kawamata log
terminal at $P$, and $(S,D)$ is Kawamata log terminal in a
punctured neighborhood of the point $P$, then
$\mathrm{mult}_{P}(D)>1$.
\end{lemma}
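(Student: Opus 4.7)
The plan is to reduce the statement to Theorem~\ref{theorem:adjunction-2} applied with the boundary coefficient $\mu=0$, using an auxiliary smooth curve $Z$ chosen so that its intersection number with $D$ at $P$ equals $\mathrm{mult}_P(D)$.

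First I would choose an irreducible curve $Z$ on $S$ through $P$ with the following properties: (i) $Z$ is smooth at $P$, (ii) $Z$ is not contained in the support of $D$, and (iii) $Z$ is transversal at $P$ to the tangent cone of every irreducible component $C_i$ of $\mathrm{Supp}(D)$ that passes through $P$. Such a $Z$ exists by a routine Bertini/moving argument: fix any very ample divisor $H$ on $S$ and, for $n\gg 0$, take a general member of the sub-linear system of $|nH|$ vanishing at $P$; the general such member is smooth at $P$ and meets the finitely many prescribed tangent directions transversally, and can be chosen irreducible and distinct from each $C_i$.

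With this choice, each irreducible component $C_i$ of $\mathrm{Supp}(D)$ passing through $P$ satisfies $\mathrm{mult}_P(C_i\cdot Z)=\mathrm{mult}_P(C_i)$, so by linearity
\[
\mathrm{mult}_P\bigl(D\cdot Z\bigr)=\sum_{i\,:\,P\in C_i}a_i\,\mathrm{mult}_P\bigl(C_i\cdot Z\bigr)=\sum_{i\,:\,P\in C_i}a_i\,\mathrm{mult}_P(C_i)=\mathrm{mult}_P(D).
\]

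Now apply Theorem~\ref{theorem:adjunction-2} to the log pair $(S,\mu Z+D)$ with $\mu=0$. By the hypotheses of the lemma, $(S,D)=(S,0\cdot Z+D)$ is not Kawamata log terminal at $P$ and is Kawamata log terminal in a punctured neighborhood of $P$, so the theorem applies and yields $\mathrm{mult}_P(D\cdot Z)>1$. Combining with the identity above gives $\mathrm{mult}_P(D)>1$, as required.

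The only non-formal step is the construction of the auxiliary curve $Z$ satisfying (i)--(iii), but this is standard and does not present a real obstacle; everything else is a direct specialization of Theorem~\ref{theorem:adjunction-2}.
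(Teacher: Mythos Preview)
Your argument is correct, but it follows a different path from the paper's proof. The paper blows up $P$ and works on $S_1$: the log pull-back $(S_1, D^1+(\mathrm{mult}_P(D)-1)E_1)$ is not Kawamata log terminal at some $P_1\in E_1$, and under the assumption $\mathrm{mult}_P(D)\leqslant 1$ one checks it is Kawamata log terminal in a punctured neighborhood of $P_1$; then Theorem~\ref{theorem:adjunction-2} applied with $Z=E_1$ gives $\mathrm{mult}_P(D)=D^1\cdot E_1>1$, a contradiction. Your approach stays on $S$ and manufactures a curve $Z$ whose local intersection with $D$ realizes $\mathrm{mult}_P(D)$, then applies Theorem~\ref{theorem:adjunction-2} with $\mu=0$. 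Both proofs ultimately invoke the same inversion-of-adjunction statement; the paper's version is tidier because the exceptional curve $E_1$ automatically satisfies $D^1\cdot E_1=\mathrm{mult}_P(D)$ with no transversality bookkeeping, whereas your version avoids the blow-up at the cost of a Bertini-type construction of $Z$. Either way the content is the same.
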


\begin{proof}
By Remark~\ref{remark:log-pull-back}, the log pair $(S_1,
D^1+(\mathrm{mult}_{P}(D)-1)E_1)$ is not Kawamata log terminal at
some point $P_1\in E_1$. Moreover, if $\mathrm{mult}_{P}(D)<2$,
then $(S_1, D^1+(\mathrm{mult}_{P}(D)-1)E_1)$ is Kawamata log
terminal at a punctured neighborhood of the point $P_1$. Thus, if
$\mathrm{mult}_{P}(D)\leqslant 1$, then
$\mathrm{mult}_{P}\big(D\big)=D^1\cdot E_1>1$ by
Theorem~\ref{theorem:adjunction-2}, which is absurd.
\end{proof}

Let $Z_1$ and $Z_2$ be two irreducible curves on the surface $S$
such that $Z_1$ and $Z_2$ are not contained in $\mathrm{Supp}(D)$.
Suppose that $P\in Z_1\cap Z_2$, the curves $Z_1$ and $Z_2$ are
smooth at $P$, the curves $Z_1$ and $Z_2$ intersect each other
transversally at $P$. Let $\mu_1$ and $\mu_2$ be non-negative
rational numbers.

\begin{theorem}[{\cite[Theorem~13]{Ch13}}]
\label{theorem:Trento} Suppose that the log pair $(S,
\mu_1Z_1+\mu_2Z_2+D)$ is not log canonical at the point $P$, and
$\mathrm{mult}_{P}(D)\leqslant 1$. Then either
$\mathrm{mult}_{P}(D\cdot Z_{1})>2(1-\mu_{2})$ or
$\mathrm{mult}_{P}(D\cdot Z_{2})>2(1-\mu_{1})$ (or both).
\end{theorem}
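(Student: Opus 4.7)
I plan to prove this by blowing up $P$ once and then analyzing, case by case, where the log pullback fails to be log canonical along the exceptional curve, invoking Theorem~\ref{theorem:adjunction} together with the hypothesis $\mathrm{mult}_P(D)\leqslant 1$ and the transversality of $Z_1,Z_2$ at $P$.

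Let $\pi_1 \colon S_1 \to S$ denote the blow up of $P$ with exceptional curve $E_1$, let $Z_1^1, Z_2^1, D^1$ be the proper transforms, and put $m = \mathrm{mult}_P(D) \leqslant 1$. By Remark~\ref{remark:log-pull-back} the log pullback
$$ \bigl(S_1,\ \mu_1 Z_1^1 + \mu_2 Z_2^1 + D^1 + (\mu_1 + \mu_2 + m - 1) E_1\bigr) $$
is not log canonical at some point $Q \in E_1$. Since $Z_1$ and $Z_2$ meet transversally at $P$, the points $Q_1 = Z_1^1 \cap E_1$ and $Q_2 = Z_2^1 \cap E_1$ are distinct. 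I may assume $\mu_1, \mu_2 \leqslant 1$ (otherwise some $\mu_i Z_i$ already violates log canonicity and the conclusion follows directly from Theorem~\ref{theorem:adjunction}), and also $\mu_1 + \mu_2 + m \leqslant 2$ so that the coefficient $c = \mu_1 + \mu_2 + m - 1$ of $E_1$ does not exceed $1$.

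The main case is $Q = Q_1$ (the case $Q = Q_2$ being symmetric and producing the second alternative). Since $Z_2^1$ does not pass through $Q_1$, Theorem~\ref{theorem:adjunction} applied at $Q_1$ with the curve $Z_1^1$ of coefficient $\mu_1$ yields
$$ \mathrm{mult}_{Q_1}\bigl( (D^1 + c E_1) \cdot Z_1^1 \bigr) > 1, $$
and, because $E_1 \cdot Z_1^1 = 1$ transversally at $Q_1$, this simplifies to $\mathrm{mult}_{Q_1}(D^1 \cdot Z_1^1) > 1 - c = 2 - \mu_1 - \mu_2 - m$. The blow-up formula $\mathrm{mult}_P(D \cdot Z_1) = m + \mathrm{mult}_{Q_1}(D^1 \cdot Z_1^1)$, valid because $Z_1$ is smooth at $P$, then gives
$$ \mathrm{mult}_P(D \cdot Z_1) > 2 - \mu_1 - \mu_2. $$
When $\mu_1 \leqslant \mu_2$ this already implies $\mathrm{mult}_P(D \cdot Z_1) > 2(1 - \mu_2)$, which is the first alternative of the theorem, and the symmetric argument handles $Q = Q_2$ with $\mu_2 \leqslant \mu_1$.

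The hard part will be the asymmetric situation in which $Q = Q_1$ but $\mu_1 > \mu_2$ (and its mirror image), because there the inequality I extracted above is strictly weaker than $2(1-\mu_2)$ while the non-log-canonical point $Q_1$ lies on $Z_1^1$ and gives no direct control over the intersection with $Z_2$. To secure the alternative bound $\mathrm{mult}_P(D \cdot Z_2) > 2(1 - \mu_1)$ in this case, I would blow up $Q_1$ again: the hypothesis $m \leqslant 1$ bounds $\mathrm{mult}_{Q_1}(D^1)$ and hence the coefficient of the new exceptional curve, Lemma~\ref{lemma:log-pull-back} pins down the unique non-log-canonical point on it, and iterated application of Theorem~\ref{theorem:adjunction} (or Theorem~\ref{theorem:adjunction-2}) along the infinitely near chain forces the factor of $2$ in one of the two alternatives. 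The remaining case $Q \notin \{Q_1, Q_2\}$ is dispatched by applying Theorem~\ref{theorem:adjunction} at $Q$ with the curve $E_1$: combined with $\mathrm{mult}_Q(D^1 \cdot E_1) \leqslant D^1 \cdot E_1 = m$ this yields $\mu_1 + \mu_2 + 2m > 2$, and the inequality $\mathrm{mult}_P(D \cdot Z_i) \geqslant m$ then upgrades to the required factor-of-$2$ bound on one of the two intersections.
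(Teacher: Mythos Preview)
First, note that the paper does not give its own proof of Theorem~\ref{theorem:Trento}: the result is quoted from \cite{Ch13} and used as a black box, so there is no in-paper argument to compare your attempt against.

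On the substance of your sketch, there is a genuine gap. A single blow-up followed by adjunction along $Z_i^1$ yields only $\mathrm{mult}_P(D\cdot Z_i)>2-\mu_1-\mu_2$, and this reaches $2(1-\mu_{3-i})$ precisely when $\mu_i\leqslant\mu_{3-i}$. You correctly identify the asymmetric situation $Q=Q_1$ with $\mu_1>\mu_2$ (and its mirror) as ``the hard part,'' but you do not actually prove it: ``blow up $Q_1$ again \dots\ iterated application of Theorem~\ref{theorem:adjunction} forces the factor of $2$'' is an expectation, not an argument. After the second blow-up the non-log-canonical point can again land on the proper transform of $Z_1$, and nothing in your outline explains why the accumulated inequalities ever produce $2(1-\mu_2)$ rather than the weaker $2-\mu_1-\mu_2$, or how the process terminates. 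This step is exactly where the content of the theorem lies, and it does not drop out of one or two further applications of adjunction.

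Your treatment of the case $Q\notin\{Q_1,Q_2\}$ is also not right as written. From $\mu_1+\mu_2+2m>2$ together with $\mathrm{mult}_P(D\cdot Z_i)\geqslant m$ you obtain only $\mathrm{mult}_P(D\cdot Z_i)>1-\tfrac{1}{2}(\mu_1+\mu_2)$, which is in general strictly weaker than either alternative in the statement. (That said, this case is in fact empty: applying Theorem~\ref{theorem:adjunction} along $E_1$ at such a $Q$ gives $\mathrm{mult}_Q(D^1\cdot E_1)>1$, whereas $D^1\cdot E_1=m\leqslant 1$.)
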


This result implies

\begin{theorem}
\label{theorem:Trento-2} Suppose that $(S, \mu_1Z_1+\mu_2Z_2+D)$
is not Kawamata log terminal at $P$, and $\mathrm{mult}_{P}(D)<1$.
Then either $\mathrm{mult}_{P}(D\cdot Z_{1})\geqslant
2(1-\mu_{2})$ or $\mathrm{mult}_{P}(D\cdot Z_{2})\geqslant
2(1-\mu_{1})$ (or both).
\end{theorem}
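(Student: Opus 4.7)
The plan is to derive Theorem~\ref{theorem:Trento-2} from Theorem~\ref{theorem:Trento} by a standard perturbation-and-limit argument, treating the failure of Kawamata log terminality as the boundary case of the failure of log canonicity under scaling of the boundary divisor.

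First I would observe that if $(S,\mu_1 Z_1+\mu_2 Z_2+D)$ is not Kawamata log terminal at $P$, then for every rational $\epsilon>0$ the scaled pair $(S,(1+\epsilon)(\mu_1 Z_1+\mu_2 Z_2+D))$ is not log canonical at $P$. Writing $B=\mu_1 Z_1+\mu_2 Z_2+D$, this follows from the scaling identity $\mathrm{lct}_P(S,(1+\epsilon)B)=(1+\epsilon)^{-1}\mathrm{lct}_P(S,B)$: the failure of Kawamata log terminality gives $\mathrm{lct}_P(S,B)\leqslant 1$, hence $\mathrm{lct}_P(S,(1+\epsilon)B)<1$, which is exactly the failure of log canonicity.

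Next I would fix a small rational $\epsilon>0$ with $(1+\epsilon)\mathrm{mult}_P(D)\leqslant 1$; such $\epsilon$ exists because the hypothesis $\mathrm{mult}_P(D)<1$ is strict. I could then apply Theorem~\ref{theorem:Trento} to the perturbed log pair
$$
\bigl(S,\,(1+\epsilon)\mu_1 Z_1+(1+\epsilon)\mu_2 Z_2+(1+\epsilon)D\bigr),
$$
with $(1+\epsilon)\mu_i$ playing the role of $\mu_i$ and $(1+\epsilon)D$ playing the role of $D$. Both hypotheses of Theorem~\ref{theorem:Trento} hold by the preceding step and the choice of $\epsilon$, so it yields
$$
(1+\epsilon)\mathrm{mult}_P(D\cdot Z_1)>2\bigl(1-(1+\epsilon)\mu_2\bigr)\quad\text{or}\quad (1+\epsilon)\mathrm{mult}_P(D\cdot Z_2)>2\bigl(1-(1+\epsilon)\mu_1\bigr).
$$

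Finally I would let $\epsilon$ run through a sequence tending to $0^+$ and invoke the pigeonhole principle to find an infinite subsequence along which the same alternative holds. Passing to the limit in that alternative yields the desired non-strict inequality $\mathrm{mult}_P(D\cdot Z_1)\geqslant 2(1-\mu_2)$ or $\mathrm{mult}_P(D\cdot Z_2)\geqslant 2(1-\mu_1)$. There is essentially no obstacle here: the only subtle point is that the multiplicity hypothesis $\mathrm{mult}_P((1+\epsilon)D)\leqslant 1$ required by Theorem~\ref{theorem:Trento} must survive the perturbation, and the strict inequality $\mathrm{mult}_P(D)<1$ built into the statement of Theorem~\ref{theorem:Trento-2} is there precisely to guarantee this. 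Note that if $\mu_1>1$ or $\mu_2>1$ then the corresponding right-hand side $2(1-\mu_i)$ is nonpositive and the conclusion is automatic, so no separate treatment of that case is needed.
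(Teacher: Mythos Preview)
Your proof is correct and follows essentially the same approach as the paper: scale the boundary by a factor $\lambda>1$ (your $1+\epsilon$) to pass from failure of Kawamata log terminality to failure of log canonicity, apply Theorem~\ref{theorem:Trento} to the scaled pair, and then let $\lambda\to 1^{+}$, using pigeonhole to keep one alternative in the limit. The only cosmetic difference is that you scale the entire boundary $\mu_1Z_1+\mu_2Z_2+D$, which makes the ``not log canonical'' step transparently a consequence of $\mathrm{lct}_P(S,(1+\epsilon)B)=(1+\epsilon)^{-1}\mathrm{lct}_P(S,B)$, and is why you need $(1+\epsilon)\mathrm{mult}_P(D)\leqslant 1$ --- exactly the same constraint the paper imposes via $\lambda\leqslant 1/\mathrm{mult}_P(D)$.
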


\begin{proof}
Let $\lambda$ be a rational number such that
$$
\frac{1}{\mathrm{mult}_{P}(D)}\geqslant\lambda>1.
$$
Then
$(S,D+\lambda \mu_1Z_1+\lambda \mu_2Z_2)$ is not log canonical at
$P$. Now it follows from Theorem~\ref{theorem:Trento} that either
$\mathrm{mult}_{P}(D\cdot Z_{1})>2(1-\lambda \mu_{2})$ or
$\mathrm{mult}_{P}(D\cdot Z_{2})>2(1-\lambda \mu_{1})$ (or both).
Since we can choose $\lambda$ to be as close to $1$ as we wish, this
implies that either $\mathrm{mult}_{P}(D\cdot Z_{1})\geqslant
2(1-\mu_{2})$ or $\mathrm{mult}_{P}(D\cdot Z_{2})\geqslant
2(1-\mu_{1})$ (or both).
\end{proof}

\section{Reduced plane curves}
\label{section:plane-curves}

The purpose of this section is to prove
Theorems~\ref{theorem:plane-curve} and
\ref{theorem:plane-curve-stability}. Let $C_d$ be a \emph{reduced}
plane curve in $\mathbb{P}^2$ of degree $d\geqslant 4$, and let
$P$ be a point in $C_d$.  Put $\lambda_1=\frac{2d-3}{d(d-2)}$ and $\lambda_2=\frac{5}{2d}$.
To prove Theorem~\ref{theorem:plane-curve}, we have to show that if the log
pair $(\mathbb{P}^2,\lambda_1 C_d)$ is not Kawamata log terminal at the point $P$,
then one of the following assertions hold:
\begin{itemize}
\item $\mathrm{mult}_{P}(C_d)=d$,%
\item $C_d$ has singularity $\mathbb{T}_{d-1}$, $\mathbb{K}_{d-1}$, $\widetilde{\mathbb{T}}_{d-1}$ or $\widetilde{\mathbb{K}}_{d-1}$ at the point $P$, %
\item $d=4$ and $C_4$ is a P\l oski curve (see Definition~\ref{definition:Ploski-curve-even}).%
\end{itemize}
To prove Theorem~\ref{theorem:plane-curve-stability}, we have to
show that if $(\mathbb{P}^2,\lambda_2 C_d)$ is not Kawamata log
terminal, then either $C_d$ is GIT-unstable or $C_d$ is an even
P\l oski curve. In the rest of the section, we will
do this simultaneously. Let us start with few preliminary results.

\begin{lemma}
\label{lemma:plane-curve-inequalities} The following inequalities
hold:
\begin{enumerate}
\item[(i)] $\lambda_1<\frac{2}{d-1}$,

\item[(ii)] $\lambda_1<\frac{2k+1}{kd}$ for every positive integer $k\leqslant d-3$,%

\item[(iii)] if $d\geqslant 5$, then $\lambda_1<\frac{2k+1}{kd+1}$ for every positive integer $k\leqslant d-4$,%

\item[(iv)] $\lambda_1<\frac{3}{d}$,

\item[(v)] $\lambda_1<\frac{2}{d-2}$,

\item[(vi)] $\lambda_1<\frac{6}{3d-4}$,

\item[(vii)] if $d\geqslant 5$, then $\lambda_1<\lambda_2$.
\end{enumerate}
\end{lemma}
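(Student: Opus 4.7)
The plan is to verify each of the seven inequalities by clearing denominators and reducing to an elementary polynomial inequality in $d$ (and $k$). Since $d(d-2) > 0$, $kd > 0$, $kd+1 > 0$, $d > 0$, $d-2 > 0$, $3d-4 > 0$, and $2d > 0$ throughout (as $d \geqslant 4$), cross-multiplication preserves direction.

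For (i), $\lambda_1 < \frac{2}{d-1}$ is equivalent to $(2d-3)(d-1) < 2d(d-2)$, which simplifies to $3 < d$, true for $d \geqslant 4$. For (iv), $\lambda_1 < \frac{3}{d}$ becomes $2d-3 < 3(d-2)$, i.e. $3 < d$. For (v), $\lambda_1 < \frac{2}{d-2}$ becomes $2d-3 < 2d$, trivially true. For (vi), $\lambda_1 < \frac{6}{3d-4}$ becomes $(2d-3)(3d-4) < 6d(d-2)$, which simplifies to $12 < 5d$, true for $d \geqslant 4$. For (vii), $\lambda_1 < \lambda_2$ becomes $2(2d-3) < 5(d-2)$, i.e. $4 < d$, true for $d \geqslant 5$.

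For (ii), $\lambda_1 < \frac{2k+1}{kd}$ with $1 \leqslant k \leqslant d-3$ is equivalent to $(2d-3)k < (2k+1)(d-2)$; expanding both sides gives $2dk - 3k < 2dk - 4k + d - 2$, i.e. $k < d-2$, which holds by the hypothesis $k \leqslant d-3$. For (iii), $\lambda_1 < \frac{2k+1}{kd+1}$ with $1 \leqslant k \leqslant d-4$ is equivalent to $(2d-3)(kd+1) < (2k+1)d(d-2)$. Expanding and cancelling the $2kd^2$ terms leaves $kd < d^2 - 4d + 3 = (d-1)(d-3)$, i.e. $k < \frac{(d-1)(d-3)}{d} = d - 4 + \frac{3}{d}$, so it suffices that $k \leqslant d-4$, which is the hypothesis.

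There is no real obstacle here; every step is a one-line algebraic manipulation, and the only care needed is to track which of the estimates require $d \geqslant 5$ as opposed to $d \geqslant 4$ — namely (iii) (where the restriction $k \leqslant d-4$ is vacuous unless $d \geqslant 5$) and (vii) (where $d=4$ gives equality $\lambda_1 = \lambda_2 = \tfrac{5}{8}$). I would present the seven cases in the order listed, in a short bulleted verification, since nothing deeper is involved.
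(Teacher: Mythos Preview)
Your proof is correct; each cross-multiplication and simplification checks out. The paper's own proof is equally elementary but organized slightly differently: it writes the difference $\frac{2}{d-1}-\lambda_1$, $\frac{3}{d}-\lambda_1$, $\lambda_2-\lambda_1$ explicitly as a positive rational function of $d$ for (i), (iv), (vii), then deduces (v) and (vi) from (i) rather than verifying them independently, and for (ii) and (iii) it checks the boundary value of $k$ and uses that $\frac{2k+1}{kd}$ and $\frac{2k+1}{kd+1}$ are decreasing in $k$. Your direct cross-multiplication is just as short and arguably more uniform; nothing is lost either way.
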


\begin{proof}
The equality $\frac{2}{d-1}=\lambda_1+\frac{d-3}{d(d-1)(d-2)}$
implies (i). Let $k$ be positive integer. If $k=d-2$, then
$\lambda_1=\frac{2k+1}{kd}$. This implies (ii), because
$\frac{2k+1}{kd}=\frac{2}{d}+\frac{1}{kd}$ is a decreasing
function on $k$ for $k\geqslant 1$. Similarly, if $k=d-4$ and
$d\geqslant 4$, then
$\lambda_1=\frac{2k+1}{kd+1}-\frac{3}{d(d-2)(d^2-4d+1)}<\frac{2k+1}{kd+1}$.
This implies (iii), since
$\frac{2k+1}{kd+1}=\frac{2}{d}+\frac{d-2}{d(kd+1)}$ is a
decreasing function on $k$ for $k\geqslant 1$. The equality
$\lambda_1=\frac{3}{d}-\frac{d-3}{d(d-2)}$ proves (iv). Note that
(v) follows from (i). Since $\frac{6}{3d-4}>\frac{2}{d-1}$, (vi)
also follows from (i). Finally, the equality
$\lambda_1=\lambda_2-\frac{d-4}{2d(d-2)}$ implies (vii).
\end{proof}

We may assume that $P=[0:0:1]$. Then $C_d$ is given by
$F_d(x,y,z)=0$, where $F_{d}(x,y,z)$ is a homogeneous polynomial
of degree $d$. Put $x_1=\frac{x}{z}$, $x_2=\frac{y}{z}$ and
$f_d(x_1,x_2)=F_d(x_1,x_2,1)$. Then
$$
f_d\big(x_1,x_2\big)=\sum_{\substack{i\geqslant 0, j\geqslant 0,\\ m_0\leqslant i+j\leqslant d}} \epsilon_{ij}x_1^ix_2^j,%
$$
where each $\epsilon_{ij}$ is a complex number. For every positive
integers $a$ and $b$, define the weight of the polynomial
$f_d(x_1,x_2)$ as
$$
\mathrm{wt}_{(a,b)}\big(f_d(x_1,x_2)\big)=\min\Big\{ai+bj\ \Big\vert\ \epsilon_{ij}\ne 0\Big\}.%
$$
Then the Hilbert--Mumford criterion implies

\begin{lemma}[{\cite[Lemma~2.1]{KimLee}}]%
\label{lemma:stability} Let $a$ and $b$ be positive integers. If
$C_d$ is GIT-stable, then
$$
\mathrm{wt}_{(a,b)}\Big(f_d\big(x_1,x_2\big)\Big)<\frac{d}{3}\big(a+b\big).
$$
Similarly,
if $C_d$ is GIT-semistable, then
$\mathrm{wt}_{(a,b)}(f_d(x_1,x_2))\leqslant\frac{d}{3}(a+b)$.
\end{lemma}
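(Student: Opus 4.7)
The proof is a direct application of the Hilbert--Mumford numerical criterion to a carefully chosen one-parameter subgroup of $\mathrm{SL}_{3}(\mathbb{C})$ adapted to the pair $(a,b)$. My plan has three steps: (i) write down the 1-PS; (ii) compute the $t$-weight it induces on every monomial of $F_{d}$; (iii) read off the required inequality from the criterion.

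For the 1-PS I would take
$$
\lambda(t)=\mathrm{diag}\bigl(t^{2a-b},\,t^{2b-a},\,t^{-(a+b)}\bigr).
$$
Its three exponents are integers summing to zero, so $\lambda$ lands in $\mathrm{SL}_{3}(\mathbb{C})$; as $a,b$ are positive integers they cannot all vanish, so $\lambda$ is non-trivial. Its action multiplies each monomial $x^{i}y^{j}z^{d-i-j}$ by $t^{w_{ij}}$ where a one-line calculation gives
$$
w_{ij}=(2a-b)i+(2b-a)j-(a+b)(d-i-j)=3(ai+bj)-(a+b)d.
$$
Since $f_{d}(x_{1},x_{2})=F_{d}(x,y,1)$, the coefficients $\epsilon_{ij}$ appearing in the definition of $\mathrm{wt}_{(a,b)}(f_{d})$ are exactly the coefficients of $F_{d}$, and therefore
$$
\min_{\epsilon_{ij}\ne 0} w_{ij}\;=\;3\,\mathrm{wt}_{(a,b)}\bigl(f_{d}(x_{1},x_{2})\bigr)\,-\,(a+b)d.
$$

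The Hilbert--Mumford criterion now says that if $C_{d}$ is GIT-semistable (resp.\ GIT-stable) then, with respect to any non-trivial 1-PS of $\mathrm{SL}_{3}(\mathbb{C})$, at least one monomial of $F_{d}$ has non-positive (resp.\ strictly negative) $t$-weight. Applied to the particular $\lambda$ above, semistability forces some $w_{ij}\leqslant 0$, which rearranges precisely to $\mathrm{wt}_{(a,b)}(f_{d})\leqslant\tfrac{d}{3}(a+b)$; in the stable case the same step with strict inequality yields $\mathrm{wt}_{(a,b)}(f_{d})<\tfrac{d}{3}(a+b)$.

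There is no genuine obstacle here: the 1-PS is essentially forced up to reordering by the two requirements that the weight of $x^{i}y^{j}z^{d-i-j}$ be an affine function of $ai+bj$ and that the three diagonal exponents sum to zero. The upper bound $\tfrac{d}{3}(a+b)$ in the statement is simply the ``average'' weight $(a+b)d/3$ that a degree-$d$ monomial would carry under the unshifted weights $(a,b,0)$, before subtracting off $\tfrac{a+b}{3}$ from each entry to project into $\mathrm{SL}_{3}(\mathbb{C})$.
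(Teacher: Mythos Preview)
Your proof is correct. The paper does not actually prove this lemma: it is quoted verbatim from \cite[Lemma~2.1]{KimLee}, with only the remark that ``the Hilbert--Mumford criterion implies'' it. Your argument is precisely the standard Hilbert--Mumford computation the paper is alluding to, so there is nothing to compare against here.
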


Let $f_1\colon S_1\to\mathbb{P}^2$ be a blow up of the point $P$.
Denote by $E_1$ the exceptional curve of the blow up $f_1$. Denote by $C^1_d$ the
proper transform on $S_1$ of the curve $C_d$.

\begin{lemma}
\label{lemma:stability-simple} If $\mathrm{mult}_{P}(C_d)>\frac{2d}{3}$, then $C_d$ is GIT-unstable.
Let $O$ be a point in $E_1$. If
$$
\mathrm{mult}_{P}(C_d)+\mathrm{mult}_{O}(C^1_d)>d,
$$
then $C_d$ is GIT-unstable.
\end{lemma}

\begin{proof}
Since $\mathrm{mult}_{P}(C_d)=\mathrm{wt}_{(1,1)}(f_d(x_1,x_2))$, the first assertion follows from Lemma~\ref{lemma:stability}.
Let us prove the second assertion.
We may assume that $O$ is contained in the proper transform of the line in $\mathbb{P}^2$ that is given by $x=0$.
Then
$$
\mathrm{wt}_{(2,1)}\big(f_d(x_1,x_2)\big)=\mathrm{mult}_{P}(C_d)+\mathrm{mult}_{O}(C^1_d),
$$
so that the second assertion also follows from Lemma~\ref{lemma:stability}.
\end{proof}

Now we are ready to prove Theorems~\ref{theorem:plane-curve} and \ref{theorem:plane-curve-stability}.
To do this, we may assume that $C_d$ is not a union of $d$ lines passing through the point $P$.
Suppose, in addition, that
\begin{itemize}
\item[(\textbf{A})] either $(\mathbb{P}^2,\lambda_1 C_d)$ is not Kawamata log terminal at $P$,%
\item[(\textbf{B})] or $(\mathbb{P}^2,\lambda_2 C_d)$ is not Kawamata log terminal at $P$.%
\end{itemize}
We will show that (\textbf{A}) implies that either $C_d$ has singularity $\mathbb{T}_{d-1}$, $\mathbb{K}_{d-1}$, $\widetilde{\mathbb{T}}_{d-1}$ or $\widetilde{\mathbb{K}}_{d-1}$ at the point $P$, or $C_d$ is a P\l oski quartic curve.
Similarly, we will show that (\textbf{B}) implies that either $C_d$ is GIT-unstable (i.e. $C_d$ is not GIT-semistable), or $C_d$ is an even P\l oski curve.
If (\textbf{A}) holds, let $\lambda=\lambda_1$. If (\textbf{B}) holds, let $\lambda=\lambda_2$.

If $d=4$, then $\lambda_1=\lambda_2$.
If $d\geqslant 5$, then $\lambda_1<\lambda_2$ by Lemma~\ref{lemma:plane-curve-inequalities}(vii).
Since $C_d$ is reduced and $\lambda<1$, the log pair $(\mathbb{P}^2,\lambda C_d)$ is Kawamata log terminal outside of finitely many points.
Thus, it is Kawamata log terminal outside of $P$ by Lemma~\ref{lemma:Shokurov}.

Put $m_0=\mathrm{mult}_P(C_d)$.
Then the log pair $(S_1,\lambda C^1_d+(\lambda m_0-1)E_1)$ is not Kawamata log terminal at some point $P_1\in E_1$ by Remark~\ref{remark:log-pull-back}.
Note that we have
$$
K_{S_1}+\lambda C^1_d+\Big(\lambda
m_0-1\Big)E_1\sim_{\mathbb{Q}}f_1^*\Big(K_{\mathbb{P}^2}+\lambda C_d\Big).%
$$
Let $f_2\colon S_2\to S_1$ be a blow up of the point $P_1$, and let $E_2$ be its exceptional curve. Denote by $C^2_d$ the proper
transform on $S_2$ of the curve $C_d$, and denote by $E_1^2$ the
proper transform on $S_2$ of the curve $E_1$. Put $m_1=\mathrm{mult}_{P_1}(C_d^1)$. Then
$$
K_{S_2}+\lambda C^2_d+\big(\lambda m_0-1\big)E_1^2+\big(\lambda(m_0+m_1)-2\big)E_2\sim_{\mathbb{Q}} f_2^*\Big(K_{S_1}+\lambda C^1_d+\big(\lambda m_0-1\big)E_1\Big).%
$$
By Remark~\ref{remark:log-pull-back}, the log pair $(S_2,\lambda C^2_d+(\lambda m_0-1)E_1^2+(\lambda (m_0+m_1)-2)E_2)$ is not Kawamata log terminal at some point $P_2\in E_2$.
Let $f_3\colon S_3\to S_2$ be a blow up of this point, and let $E_3$ be the $f_3$-exceptional curve.
Denote by $C^3_d$ the proper transform on $S_3$ of the curve $C_d$,
denote by $E_1^3$ the proper transform on $S_3$ of the curve $E_1$, and denote by $E_2^3$ the proper transform on $S_3$ of the curve $E_2$.
Put $m_2=\mathrm{mult}_{P_2}(C_d^2)$. Then
\begin{multline*}
K_{S_3}+\lambda_2 C^3_d+\big(\lambda_2 m_0-1\big)E_1^3+\\
+\big(\lambda_2(m_0+m_1)-2\big)E_2^3+\big(\lambda_2(2m_0+m_1+m_2)-4\big)E_3\sim_{\mathbb{Q}}\\
\sim_{\mathbb{Q}} f_3^*\Big(K_{S_2}+\lambda_2 C^2_d+\big(\lambda_2 m_0-1\big)E_1^2+\big(\lambda_2(m_0+m_1)-2\big)E_2\Big).%
\end{multline*}
Thus, the log pair $(S_3,\lambda_2 C^3_d+(\lambda_2 m_0-1)E_1^3+(\lambda_2(m_0+m_1)-2)E_2^3+(\lambda_2(2m_0+m_1+m_2)-4)E_3)$ is not Kawamata log terminal at some point $P_3\in E_3$ by Remark~\ref{remark:log-pull-back}.
Note that the divisor $\lambda_2 C^3_d+(\lambda_2 m_0-1)E_1^3+(\lambda_2(m_0+m_1)-2)E_2^3+(\lambda_2(2m_0+m_1+m_2)-4)E_3$
is effective by Lemma~\ref{lemma:Skoda}.

\begin{lemma}
\label{lemma:plane-curve-mult} One has $\lambda m_0<2$.
\end{lemma}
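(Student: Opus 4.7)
The plan is straightforward: treat cases (\textbf{A}) and (\textbf{B}) separately, in each case combining the standing hypotheses with already-established numerical bounds. Since no blow-ups, adjunction or connectedness arguments are needed, the proof will be essentially a one-line computation in each case.

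In case (\textbf{A}), where $\lambda = \lambda_1 = \frac{2d-3}{d(d-2)}$, I first observe that the reduction made just before the lemma statement --- namely that $C_d$ is not a union of $d$ lines through $P$ --- forces $m_0 \leqslant d-1$. This is a Bezout argument: if $m_0 = d$, then any line $L$ through $P$ satisfies $d = \deg L \cdot \deg C_d \geqslant \mathrm{mult}_P(L\cdot C_d) \geqslant m_0 = d$, so every line through $P$ is a component of $C_d$, contradicting the reduction. With $m_0 \leqslant d-1$ in hand, the desired inequality $\lambda_1 m_0 < 2$ is immediate from Lemma~\ref{lemma:plane-curve-inequalities}(i), which gives $\lambda_1 < \frac{2}{d-1}$.

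In case (\textbf{B}), where $\lambda = \lambda_2 = \frac{5}{2d}$ and $C_d$ is GIT-semistable, I invoke the Hilbert--Mumford criterion in the form of Lemma~\ref{lemma:stability} with weights $(a,b) = (1,1)$. Because $m_0 = \mathrm{wt}_{(1,1)}(f_d(x_1,x_2))$, GIT-semistability gives $m_0 \leqslant \frac{2d}{3}$, so that
\[
\lambda_2 m_0 \;\leqslant\; \frac{5}{2d}\cdot\frac{2d}{3} \;=\; \frac{5}{3} \;<\; 2.
\]
There is no substantial obstacle here; the lemma is purely a numerical bookkeeping step, collecting the fact that in both of the scenarios (\textbf{A}) and (\textbf{B}) the multiplicity $m_0$ is small enough relative to $\lambda$ to place the log pull-back $(S_1,\lambda C_d^1 + (\lambda m_0 - 1)E_1)$ in the range $\lambda m_0 - 1 < 1$, which is precisely what will be needed to apply Lemma~\ref{lemma:log-pull-back} in the subsequent analysis of the point $P_1 \in E_1$.
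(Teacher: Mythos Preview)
Your argument is correct and follows the paper's proof essentially verbatim: in case (\textbf{A}) use $m_0\leqslant d-1$ together with Lemma~\ref{lemma:plane-curve-inequalities}(i), and in case (\textbf{B}) use GIT-semistability to get $m_0\leqslant\frac{2d}{3}$ (this is exactly Corollary~\ref{corollary:stability}) and compute $\lambda_2 m_0\leqslant\frac{5}{3}$.

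One small remark: your parenthetical B\'ezout justification of $m_0\leqslant d-1$ is not quite right. From $d=L\cdot C_d\geqslant\mathrm{mult}_P(L\cdot C_d)\geqslant m_0=d$ you only conclude that a line $L$ through $P$ not contained in $C_d$ meets $C_d$ at $P$ alone; it does \emph{not} follow that every line through $P$ is a component. The standard argument is that if $m_0=d$ then the affine equation $f_d(x_1,x_2)$ has no terms of degree below $d$, hence is homogeneous of degree $d$ and factors into $d$ linear forms. This is a cosmetic issue and does not affect the validity of the lemma.
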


\begin{proof}
Since $C_d$ is not a union of $d$ lines passing through $P$, we have $m_0\leqslant d-1$.
Thus, if (\textbf{A}) holds, then $\lambda m_0<2$ by Lemma~\ref{lemma:plane-curve-inequalities}(i), because $d\geqslant 4$.
Similarly, if (\textbf{B}) holds, then $m_0\leqslant\frac{2d}{3}$ by Lemma~\ref{lemma:stability-simple}, which implies that $\lambda m_0\leqslant\frac{10}{6}<2$.
\end{proof}

Thus, the log pair $(S_1,\lambda C^1_d+(\lambda m_0-1)E_1)$ is
Kawamata log terminal outside of $P_1$ by
Lemma~\ref{lemma:log-pull-back}. Note that $P_1\in C^1_d$,
because the log pair $(S_1, (\lambda m_0-1)E_1)$ is
not Kawamata log terminal at $P_1$. Thus, we have
$m_1>0$.

Let $L$ be the line in $\mathbb{P}^2$ whose proper transform on $S_1$ contains the point $P_1$.
Such a line exists and it is unique. By a suitable linear change of coordinates, we may assume that $L$ is given by $x=0$.
Denote by $L^1$ the proper transform of the line $L$ on the surface $S_1$.

\begin{lemma}
\label{lemma:plane-curve-cusp} Suppose that (\textbf{A}) holds and $m_0=d-1$.
Then $C_d$ has singularity $\mathbb{K}_{d-1}$,
$\widetilde{\mathbb{K}}_{d-1}$, $\mathbb{T}_{d-1}$ or
$\widetilde{\mathbb{T}}_{d-1}$ at the point $P$.
\end{lemma}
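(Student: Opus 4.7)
The plan is to blow up $P$ via $f_1:S_1\to\mathbb{P}^2$ with exceptional curve $E_1$, and analyze the log pair $(S_1,\lambda C_d^1+\mu E_1)$ where $\mu=\lambda(d-1)-1$. Lemma~\ref{lemma:plane-curve-mult} ensures $\mu\in(0,1)$, Remark~\ref{remark:plane-curve-punctured} ensures Kawamata log terminality in a punctured neighborhood of $P$, and Lemma~\ref{lemma:log-pull-back} then produces a unique point $P_1\in E_1$ at which the new log pair fails to be Kawamata log terminal. Let $L$ be the unique line through $P$ whose strict transform meets $P_1$; after a linear coordinate change fix $P=[0:0:1]$ and $L=\{x=0\}$, and write the affine equation $f_d(x_1,x_2)=f_{d-1}+g_d$ with $f_{d-1}$ the tangent cone. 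The integer $k:=\mathrm{mult}_{P_1}(C_d^1\cdot E_1)$ then equals the largest power of $x_1$ dividing $f_{d-1}$.

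The first step is to prove $k\geq d-2$, splitting on whether $L\subset C_d$. If $L\not\subset C_d$, the chart $x=uv$, $y=v$ shows that $C_d^1$ is smooth at $P_1$ and tangent to $E_1$ with intersection multiplicity $k$; applying Corollary~\ref{corollary:three-curves} to the two tangent smooth curves $C_d^1,E_1$ with coefficients $\lambda,\mu$ yields $k(\lambda+\mu)\geq k+1$, i.e.\ $\lambda\geq(2k+1)/(kd)$, and Lemma~\ref{lemma:plane-curve-inequalities}(ii),(iv) force $k\geq d-2$. If $L\subset C_d$, write $C_d=L+C'_{d-1}$ with $\mathrm{mult}_P(C'_{d-1})=d-2$; since the snc pair $\lambda L^1+\mu E_1$ is already Kawamata log terminal at $P_1$, the non-KLT assumption forces $(C'_{d-1})^1$ to pass through $P_1$. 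Letting $j\geq 1$ be the multiplicity of $x_1$ as a factor of the tangent cone of $C'_{d-1}$ (so $k=j+1$), a local computation shows $(C'_{d-1})^1$ is smooth at $P_1$ and transverse to $L^1$, while being tangent to $E_1$ to order $j$ for $j\geq 2$ and transverse to $E_1$ for $j=1$. Applying Lemma~\ref{lemma:three-curves} to the three smooth curves $L^1,E_1,(C'_{d-1})^1$ (with the tangent pair chosen according to the sub-case) yields $\lambda\geq(2j+1)/(jd+1)$ in every sub-case, and Lemma~\ref{lemma:plane-curve-inequalities}(iii) then forces $j\geq d-3$, i.e.\ $k\geq d-2$; for $d=4$, where part (iii) is vacuous, the automatic bound $j\geq 1$ already gives $k\geq 2=d-2$.

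The second step is normalization. With $k\in\{d-2,d-1\}$, a further linear change of coordinates brings the tangent cone to $\alpha x_1^{d-1}$ (when $k=d-1$) or $x_1^{d-2}x_2$ (when $k=d-2$). Reducedness of $C_d$ forbids $x_1^2\mid f_d$, so at least one of the coefficients of $x_2^d$ and $x_1 x_2^{d-1}$ in the degree-$d$ part $g_d$ is nonzero. The principal part of $f_d$ is then Newton-nondegenerate on a two-vertex Newton polygon, and a Weierstrass/Tschirnhaus-type analytic change of coordinates absorbs all remaining monomials and produces exactly one of the four normal forms of Definition~\ref{definition:plane-curve-singularities}: the singularity at $P$ is $\mathbb{T}_{d-1}$, $\mathbb{K}_{d-1}$, $\widetilde{\mathbb{T}}_{d-1}$ or $\widetilde{\mathbb{K}}_{d-1}$ according to which of $(\alpha,\beta)\in\{(1,0),(0,1)\}$ (from the tangent cone) and $(\gamma,\delta)\in\{(1,0),(0,1)\}$ (coefficients of $x_1 x_2^{d-1}$ and $x_2^d$ in $g_d$) is realized.

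The main obstacle I anticipate is the case $L\subset C_d$: one must pass from the binary Corollary~\ref{corollary:three-curves} to the full ternary Lemma~\ref{lemma:three-curves} with three smooth curves through $P_1$, and the sharpened inequality $\lambda<(2k+1)/(kd+1)$ for $k\leq d-4$ from Lemma~\ref{lemma:plane-curve-inequalities}(iii) is precisely what rules out all intermediate values of $j$ in this sub-case and closes the argument for all $d\geq 5$.
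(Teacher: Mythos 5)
Your proposal is correct and follows the paper's proof essentially step for step: the same dichotomy on whether $L$ is a component of $C_d$, the same applications of Corollary~\ref{corollary:three-curves} (resp.\ Lemma~\ref{lemma:three-curves}) at the unique non-klt point $P_1\in E_1$ giving $\lambda_1\geqslant\frac{2k+1}{kd}$ (resp.\ $\lambda_1\geqslant\frac{2j+1}{jd+1}$), and the same use of Lemma~\ref{lemma:plane-curve-inequalities}(ii),(iii) to force $k$ into the top two admissible values in each case. The only differences are cosmetic: you verify smoothness and tangency of the relevant proper transforms by an explicit chart computation where the paper uses the intersection counts $C_d^1\cdot L^1=1$ and $C_{d-1}^1\cdot L^1=1$, and your concluding normal-form identification is asserted at essentially the same level of detail as in the paper.
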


\begin{proof}
Suppose that $L$ is not an irreducible component of the curve
$C_d$. Then $m_0+m_1\leqslant d$, because
$$
d-1-m_0=C_d^1\cdot L^1\geqslant m_1.
$$
Since $m_0=d-1$, this gives $m_1=1$. Then $P_1\in C_{d}^1$ and
the curve $C_{d}^1$ is smooth at $P_1$. Put
$k=\mathrm{mult}_{P_1}(C_d^1\cdot E_1)$. Applying
Corollary~\ref{corollary:three-curves} to the log pair
$(S_1,\lambda_1 C^1_d+(\lambda_1 m_0-1)E_1)$ at the point $P_1$, we get
$$
k\lambda_1 m_0\geqslant k+1,
$$
which gives $\lambda_1\geqslant\frac{2k+1}{kd}$. Then $k\geqslant
d-2$ by Lemma~\ref{lemma:plane-curve-inequalities}(ii). Since
$$
k\leqslant C_d^1\cdot E_1=m_0=d-1,
$$
either $k=d-1$ or $k=d-2$. If $k=d-1$, then $C_d$ has singularity
$\mathbb{K}_{d-1}$ at $P$. If $k=d-2$, then $C_d$ has singularity
$\widetilde{\mathbb{K}}_{d-1}$ at the point $P$.

To complete the proof, we may assume that $L$ is an
irreducible component of the curve $C_d$. Then $C_{d}=L+C_{d-1}$,
where $C_{d-1}$ is a reduced curve in $\mathbb{P}^2$ of degree
$d-1$ such that $L$ is not its irreducible component. Denote by
$C_{d-1}^1$ its proper transform on $S_1$. Put
\hbox{$n_0=\mathrm{mult}_{P}(C_{d-1})$} and
$n_1=\mathrm{mult}_{P_1}(C_{d-1}^1)$. Then $n_0=m_0-1=d-2$ and
$n_1=m_1-1$. This implies that $P_1\in
C_{d-1}^1$, since the log pair $(S_1,\lambda_1 L^1+(\lambda_1
m_0-1)E_1)$ is Kawamata log terminal at $P$.
Hence, $n_1\geqslant 1$. One the other hand, we have
$$
d-1-n_0=C_{d-1}^1\cdot L^1\geqslant  n_1,
$$
which implies that $n_0+n_1\leqslant d-1$. Then $n_1=1$, since
$n_0=d-2$.

We have $P_1\in C_{d-1}^1$ and $C_{d-1}^1$ is smooth at $P_1$.
Moreover, since
$$
1=d-1-n_0=L^1\cdot C_{d-1}^1\geqslant  n_1=1,
$$
the curve $C_{d-1}^1$ intersects the curve $L^1$ transversally at
the point $P_1$. Put $k=\mathrm{mult}_{P_1}(C_{d-1}^1\cdot E_1)$.
Then $k\geqslant 1$. Applying Lemma~\ref{lemma:three-curves} to
the log pair $(S_1,\lambda_1 C^1_{d-1}+\lambda_1 L^1+(\lambda_1
(n_0+1)-1)E_1)$ at the point $P_1$, we get
$$
k\Big(\lambda_1(n_0+2)-1\Big)+\lambda_1\geqslant k+1.
$$
Then $\lambda_1\geqslant\frac{2k+1}{kd+1}$. Then $k\geqslant d-3$
by Lemma~\ref{lemma:plane-curve-inequalities}(iii). Since
$$
k\leqslant E_1\cdot C_{d-1}^1=n_0=d-2,
$$
either $k=d-2$ or
$k=d-3$. In the former case, $C_d$ has singularity $\mathbb{T}_{d-1}$ at the point $P$.
In the latter case, $C_d$ has singularity $\widetilde{\mathbb{T}}_{d-1}$ at the point $P$.
\end{proof}

\begin{lemma}
\label{lemma:plane-curve-line-Supp} Suppose that (\textbf{A}) holds and $m_0\leqslant d-2$.
Then the line $L$ is not an irreducible component of the curve $C_d$.
\end{lemma}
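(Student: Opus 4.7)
The plan is to argue by contradiction. Suppose $L$ is an irreducible component of $C_d$, so that $C_d = L + C_{d-1}$ with $C_{d-1}$ a reduced plane curve of degree $d-1$ not containing $L$. Set $n_0 = \mathrm{mult}_P(C_{d-1}) = m_0 - 1$ and $n_1 = \mathrm{mult}_{P_1}(C_{d-1}^1) = m_1 - 1$; by the reductions preceding the lemma I may assume $m_0 \leqslant d-2$, so $n_0 \leqslant d-3$. A direct check using Definition~\ref{definition:lct-KLT} shows that the pair $(S_1, \lambda_1 L^1 + (\lambda_1 m_0 - 1) E_1)$ is Kawamata log terminal at $P_1$ (one has $\lambda_1 < 1$ and $\lambda_1 m_0 - 1 \leqslant (d-3)/d < 1$, while $L^1$ and $E_1$ meet transversally at $P_1$), so $P_1$ must lie on $C_{d-1}^1$ and hence $n_1 \geqslant 1$. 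The intersection numbers $L^1 \cdot C_{d-1}^1 = d - 1 - n_0$ and $E_1 \cdot C_{d-1}^1 = n_0$ then yield the two fundamental bounds
$$
n_0 + n_1 \leqslant d - 1 \qquad \text{and} \qquad n_1 \leqslant n_0.
$$

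The argument splits according to whether $\lambda_1 n_1 < 1$. By Lemma~\ref{lemma:Skoda-2} applied to $(\mathbb{P}^2, \lambda_1 C_d)$ at $P$, one has $\lambda_1 m_0 > 1$, so $\mu_2 := \lambda_1 m_0 - 1$ is a positive rational number and Theorem~\ref{theorem:Trento-2} applies in the first case. Taking $Z_1 = L^1$, $Z_2 = E_1$, $\mu_1 = \lambda_1$ and $D = \lambda_1 C_{d-1}^1$, the theorem produces either
$$
\bigl(L^1 \cdot C_{d-1}^1\bigr)_{P_1} \geqslant \frac{4}{\lambda_1} - 2m_0 \qquad \text{or} \qquad \bigl(E_1 \cdot C_{d-1}^1\bigr)_{P_1} \geqslant \frac{2}{\lambda_1} - 2.
$$
Combining the first inequality with $(L^1 \cdot C_{d-1}^1)_{P_1} \leqslant d - m_0$ and $m_0 \leqslant d-2$, and combining the second with $(E_1 \cdot C_{d-1}^1)_{P_1} \leqslant n_0 \leqslant d-3$, each alternative reduces (using the explicit value $\lambda_1 = (2d-3)/(d(d-2))$) to the inequality $d \leqslant 3$, contradicting $d \geqslant 4$.

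In the remaining case $\lambda_1 n_1 \geqslant 1$, one gets $n_1 \geqslant 1/\lambda_1 = d(d-2)/(2d-3)$. Combined with $n_1 \leqslant n_0$ and $n_0 + n_1 \leqslant d-1$, this forces $2n_1 \leqslant d-1$, so $d(d-2)/(2d-3) \leqslant (d-1)/2$, which again simplifies to $d \leqslant 3$ and is absurd. The main obstacle is choosing the correct dichotomy on the size of $\lambda_1 n_1$ so that Theorem~\ref{theorem:Trento-2} can be legitimately invoked in one branch while the elementary intersection bounds $n_1 \leqslant n_0$ and $n_0 + n_1 \leqslant d-1$ suffice to close the other; once this split is in place, everything else is arithmetic verification using the explicit value of $\lambda_1$.
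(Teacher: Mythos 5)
Your proof is correct and follows essentially the same route as the paper: decompose $C_d=L+C_{d-1}$, derive the bounds $n_0+n_1\leqslant d-1$ and $n_0\leqslant d-3$, and apply Theorem~\ref{theorem:Trento-2} at $P_1$ with $Z_1=L^1$, $Z_2=E_1$ to force $\lambda_1\geqslant\frac{2}{d-1}$ in either alternative, contradicting Lemma~\ref{lemma:plane-curve-inequalities}(i). The only cosmetic difference is that you handle $\lambda_1 n_1\geqslant 1$ as a separate case, whereas the paper rules it out directly from $2n_1\leqslant n_0+n_1\leqslant d-1$ before invoking the theorem; the underlying arithmetic is identical.
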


\begin{proof}
Suppose that $L$ is an irreducible
component of the curve $C_d$. Let us see for a contradiction. Put
$C_{d}=L+C_{d-1}$, where $C_{d-1}$ is a reduced curve in
$\mathbb{P}^2$ of degree $d-1$ such that $L$ is not its
irreducible component. Denote by $C_{d-1}^1$ its proper transform
on $S_1$. Put $n_0=\mathrm{mult}_{P}(C_{d-1})$ and
$n_1=\mathrm{mult}_{P_1}(C_{d-1}^1)$. Then $(S_1,
(\lambda_1(n_0+1)-1)E_1+\lambda_1 L^1+\lambda_1 C_{d-1}^1)$ is not
Kawamata log terminal at $P_1$ and is Kawamata log terminal
outside of the point $P_1$. In particular, $n_1\ne 0$, because
$(S_1, (\lambda_1(n_0+1)-1)E_1+\lambda_1 L^1)$ is Kawamata log
terminal at $P_1$. On the other hand,
$$
d-1-n_0=L^1\cdot C_{d-1}^1\geqslant n_1,
$$
which implies that $n_0+n_1\leqslant d-1$. Furthermore, we have
$n_0=m_0-1\leqslant d-3$.

Since $n_0+n_1\geqslant 2n_1$, we have $n_1\leqslant
\frac{d-1}{2}$. Then $\lambda n_1<1$ by
Lemma~\ref{lemma:plane-curve-inequalities}(i). Thus, we can apply
Theorem~\ref{theorem:Trento-2} to the log pair $(S_1,
(\lambda_1(n_0+1)-1)E_1+\lambda_1 L^1+\lambda_1 C_{d-1}^1)$ at the
point $P_1$.  This gives either
$$
\lambda_1\big(d-1-n_0\big)=\lambda_1 C_{d-1}^1\cdot L^1\geqslant
2\Big(2-\lambda_1\big(n_0+1\big)\Big)
$$
or
$$
\lambda_1 n_0=\lambda_1 C_{d-1}^1\cdot E_1\geqslant 2\big(1-\lambda_1\big)%
$$
(or both). In the former case, we have $\lambda_1(d+1+n_0)\geqslant
4$. In the latter case, we have \hbox{$\lambda_1(n_0+2)>2$}.
Thus, in both cases we have $\lambda_1(d-1)\geqslant 2$, since $n_0\leqslant d-3$.
But $\lambda_1(d-1)<2$ by Lemma~\ref{lemma:plane-curve-inequalities}(i).
This is a contradiction.
\end{proof}

If the curve $C_d$ is GIT-semistable, then $m_0\leqslant d-2$ by Lemma~\ref{lemma:stability-simple}.
Thus, it follows from Lemma~\ref{lemma:plane-curve-cusp} that we may assume that
$$
m_0\leqslant d-2
$$
in order to complete the proof of Theorems~\ref{theorem:plane-curve} and \ref{theorem:plane-curve-stability}.
Moreover, if $L$ is not an irreducible component of the curve $C_d$, then
$$
d-m_0=C_d^1\cdot L^1\geqslant m_1.
$$
Thus, if (\textbf{A}) holds, then $m_0+m_1\leqslant d$ by Lemma~\ref{lemma:plane-curve-line-Supp}.
Similarly, if the curve $C_d$ is GIT-semistable, then $m_0+m_1\leqslant d$  by Lemma~\ref{lemma:stability-simple}.
Thus, to complete the proof of Theorems~\ref{theorem:plane-curve} and \ref{theorem:plane-curve-stability}, we may also assume that
\begin{equation}
\label{equation:plane-curve-m0-m1-d}
m_0+m_1\leqslant d.
\end{equation}
Then $\lambda (m_0+m_1)<3$ by Lemma~\ref{lemma:plane-curve-inequalities}(v),
so that $(S_2,\lambda C^2_d+(\lambda m_0-1)E_1^2+(\lambda (m_0+m_1)-2)E_2)$
is Kawamata log terminal outside of the point $P_2$ by Lemma~\ref{lemma:log-pull-back}.
Furthermore, we have

\begin{lemma}
\label{lemma:plane-curve-P2-E1-E2}
Suppose that $P_2=E_1^2\cap E_2$.
Then (\textbf{A}) does not hold and $C_d$ is GIT-unstable.
\end{lemma}

\begin{proof}
We have $m_0-m_1=E_1^2\cdot C_d^2\geqslant m_2$, so that
\begin{equation}
\label{equation:new-equation}
m_2\leqslant \frac{m_0}{2},
\end{equation}
because $2m_2\leqslant m_1+m_2$.
On the other hand, $m_0\leqslant d-2$ by assumption.
Thus, we have $m_2\leqslant\frac{d-2}{2}$.

Suppose that (\textbf{A}) holds. Then $\lambda=\lambda_1$ and $\lambda_1
m_2<1$ by Lemma~\ref{lemma:plane-curve-inequalities}(v). Thus, we can apply Theorem~\ref{theorem:Trento-2} to the log pair
$(S_2,\lambda_1 C^2_d+(\lambda_1 m_0-1)E_1^2+(\lambda_1
(m_0+m_1)-2)E_2)$. This gives either
$$
\lambda_1\big(m_0-m_1\big)=\lambda_1 C_{d}^2\cdot E_1^2\geqslant 2\Big(3-\lambda_1\big(m_0+m_1\big)\Big)%
$$
or
$$
\lambda_1 m_1=\lambda_1 C_{d}^2\cdot E_2\geqslant 2\big(2-\lambda_1 m_0\big)%
$$
(or both). The former inequality implies
$\lambda_1(3m_0+m_1)\geqslant 6$. The latter inequality implies
$\lambda_1(2m_0+m_1)\geqslant 4$. On the other hand,
$m_0+m_1\leqslant d$ by \eqref{equation:plane-curve-m0-m1-d}, and
$m_0\leqslant d-2$ by assumption.  Thus, $3m_0+m_1\leqslant 3d-4$
and $2m_0+m_1\leqslant 2d-2$. Then $\lambda_1(3m_0+m_1)<6$ by
Lemma~\ref{lemma:plane-curve-inequalities}(vi), and
$\lambda_1(2m_0+m_1)<4$ by
Lemma~\ref{lemma:plane-curve-inequalities}(i).
The obtained contradiction shows that (\textbf{A}) does not hold.

We see that (\textbf{B}) holds. We have to show that $C_d$ is GIT-unstable.
Suppose that this is not the case, so that $C_d$ is GIT-semistable.
Let us seek for a contradiction.

By Lemma~\ref{lemma:stability}, we have $2m_0+m_1+m_2\leqslant \frac{5d}{3}$, because
$$
\mathrm{wt}_{(3,2)}\Big(f_d\big(x_1,x_2\big)\Big)=2m_0+m_1+m_2.
$$
Thus, we have $\lambda_2(2m_0+m_1+m_2)-4<1$ by Lemma~\ref{lemma:plane-curve-inequalities}(v).
Hence, the log pair $(S_3,\lambda_2 C^3_d+(\lambda_2 m_0-1)E_1^3+(\lambda_2(m_0+m_1)-2)E_2^3+(\lambda_2(2m_0+m_1+m_2)-4)E_3)$
is Kawamata log terminal outside of the point $P_3$ by Remark~\ref{remark:log-pull-back}.

If $P_3=E_1^3\cap E_3$, then it follows from Theorem~\ref{theorem:adjunction-2} that
$$
\lambda_2\big(m_0-m_1-m_2\big)=\lambda_2 C_d^3\cdot E_1^3>5-\lambda_2\big(2m_0+m_1+m_2\big),%
$$
which implies that $m_0>\frac{5}{3\lambda_2}=\frac{2d}{3}$, which is impossible by Lemma~\ref{lemma:stability-simple}.
If $P_3=E_2^3\cap E_3$, then it follows from Theorem~\ref{theorem:adjunction-2} that
$$
\lambda_2\big(m_1-m_2\big)=\lambda_2C_d^3\cdot E_2^3>5-\lambda_2\big(2m_0+m_1+m_2\big),%
$$
which implies that $m_0+m_1>\frac{5}{2\lambda_2}=d$, which is impossible by Lemma~\ref{lemma:stability-simple}.
Thus, we see that $P_3\not\in E_1^3\cup E_2^3$.
Then the log pair $(S_3,\lambda_2 C^3_d+(\lambda_2(2m_0+m_1+m_2)-4)E_3)$ is not Kawamata log terminal at $P_3$.
Hence, Theorem~\ref{theorem:adjunction-2} gives
$$
\lambda_2 m_2=\lambda_2 C_d^3\cdot E_3>1,
$$
which implies that $m_2>\frac{1}{\lambda_2}=\frac{2d}{5}$.
Then $m_0>\frac{4d}{5}$ by \eqref{equation:new-equation},
which is impossible by Lemma~\ref{lemma:stability-simple}.
\end{proof}

Thus, to complete the proof of Theorems~\ref{theorem:plane-curve} and \ref{theorem:plane-curve-stability},
we may assume that
$$
P_2\ne E_1^2\cap E_2.
$$
Denote by $L^2$ the proper transform of the line $L$ on the surface $S_2$.

\begin{lemma}
\label{lemma:plane-curve-P2-L-E2} One has $P_2\ne L^2\cap E_2$.
\end{lemma}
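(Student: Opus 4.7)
The plan is to proceed by contradiction, assuming $P_2=L^2\cap E_2$, and to apply one of the two inversion-of-adjunction tools (Theorem~\ref{theorem:adjunction-2} or Theorem~\ref{theorem:Trento-2}) to the log pull-back
$$
D'=\lambda C_d^2+\big(\lambda m_0-1\big)E_1^2+\big(\lambda(m_0+m_1)-2\big)E_2
$$
at the point $P_2$ on $S_2$. I will split the argument according to whether the line $L$ is an irreducible component of $C_d$ or not. Under (\textbf{A}), Lemma~\ref{lemma:plane-curve-line-Supp} excludes the component case; under (\textbf{B}) both possibilities are a priori open.

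In the case where $L$ is not a component of $C_d$, the line $L^2$ is not contained in the support of $D'$, so I plan to apply Theorem~\ref{theorem:adjunction-2} with $Z=L^2$ and $\mu=0$. The intersection $D'\cdot L^2$ splits as $\lambda(C_d^2\cdot L^2)+0+(\lambda(m_0+m_1)-2)$, using that the proper transforms $L^2$ and $E_1^2$ are disjoint on $S_2$ and that $L^2$ meets $E_2$ transversally at $P_2$. Since $C_d^2\cdot L^2=d-m_0-m_1$, this gives $\mathrm{mult}_{P_2}(D'\cdot L^2)\leqslant\lambda d-2$, and the conclusion $\mathrm{mult}_{P_2}(D'\cdot L^2)>1$ forces $\lambda d>3$. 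Under (\textbf{A}) this contradicts Lemma~\ref{lemma:plane-curve-inequalities}(iv), and under (\textbf{B}) it contradicts the trivial inequality $\lambda_2=5/(2d)<3/d$.

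In the remaining case $L\subset C_d$ I must be in situation (\textbf{B}), so $\lambda=\lambda_2$. Writing $C_d=L+C_{d-1}$, my plan is to apply Theorem~\ref{theorem:Trento-2} at $P_2$ with $Z_1=L^2$, $Z_2=E_2$, $\mu_1=\lambda_2$, $\mu_2=\lambda_2(m_0+m_1)-2$, and residual divisor $D=\lambda_2 C_{d-1}^2+(\lambda_2 m_0-1)E_1^2$. The two alternatives of the theorem, fed with $L^2\cdot C_{d-1}^2=d+1-m_0-m_1$ and $E_2\cdot C_{d-1}^2=m_1-1$ (together with $L^2\cdot E_1^2=0$ and the fact that $E_1^2$ misses $P_2$), reduce respectively to $\lambda_2(d+1+m_0+m_1)\geqslant 6$ and $\lambda_2(m_1+1)\geqslant 2$. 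Substituting $m_0+m_1\leqslant d$ from Lemma~\ref{lemma:plane-curve-stability-m0-m1-d} and $m_1\leqslant d/2$ (which follows from $m_0\geqslant m_1$), each alternative collapses to $d\leqslant 5/2$ or $d\leqslant 10/3$, both absurd for $d\geqslant 4$.

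The one non-routine step is the verification of the hypothesis $\mathrm{mult}_{P_2}(D)<1$ of Theorem~\ref{theorem:Trento-2} in the component case, which amounts to $\lambda_2(m_2-1)<1$. I plan to handle this via an a priori bound on $m_2$: when $L\subset C_d$, the intersection $L^2\cdot C_{d-1}^2=d+1-m_0-m_1$ bounds $m_2-1=\mathrm{mult}_{P_2}(C_{d-1}^2)$ from above, and combining with $m_0\geqslant m_1\geqslant m_2$ yields $m_2\leqslant (d+2)/3$, which is comfortably below $1/\lambda_2+1=(2d+5)/5$. This is the point where the assumption that $L$ is a component (so that the line--residual-curve intersection number is available) is essential; without it, the applicability of Theorem~\ref{theorem:Trento-2} is not guaranteed for large $d$.
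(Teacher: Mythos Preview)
Your proof is correct. The treatment of the case where $L$ is not an irreducible component of $C_d$ coincides with the paper's: both reduce to the inequality $\lambda d>3$ via adjunction along $L^2$, which is impossible under either (\textbf{A}) or (\textbf{B}).

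In the case where $L$ is a component (so (\textbf{B}) holds), your argument genuinely differs from the paper's and is cleaner. The paper applies only the one-curve inversion of adjunction (Theorem~\ref{theorem:adjunction-2}) along $L^2$, obtaining $\lambda_2(d+1)>3$; this forces $d=4$ but does not exclude it. The paper then handles $d=4$ by a separate computation: it pins down $n_0=n_1=n_2=1$, recognizes $C_4$ as a line plus an irreducible cubic with $L$ an inflectional tangent, and checks directly that $\mathrm{lct}_P(\mathbb{P}^2,C_4)=\tfrac{2}{3}>\tfrac{5}{8}$. By contrast, you invoke the two-curve inequality (Theorem~\ref{theorem:Trento-2}) with the transversal pair $L^2$, $E_2$; after verifying $\lambda_2(m_2-1)<1$ via the bound $m_2\leqslant(d+2)/3$, each branch of the dichotomy yields $d\leqslant\tfrac{5}{2}$ or $d\leqslant\tfrac{10}{3}$, ruling out all $d\geqslant 4$ at once. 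The trade-off is that you use the stronger local tool, while the paper gets by with the weaker one at the cost of an ad~hoc endgame for $d=4$.
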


\begin{proof}
Suppose that $P_2=L^2\cap E_2$. If $L$ is not an irreducible
component of the curve $C_d$, then
$$
d-m_0-m_1=L^2\cdot E_2\geqslant m_2,
$$
which implies that $m_0+m_1+m_2\leqslant d$. Thus, if (\textbf{A})
holds, then $\lambda=\lambda_1$ and $L$ is not an irreducible
component of the curve $C_d$ by
Lemma~\ref{lemma:plane-curve-line-Supp}, which implies that
$$
\lambda_1 d\geqslant \lambda_1\big(m_0+m_1+m_2\big)>3
$$
by Lemma~\ref{lemma:Skoda-2}. On the other hand, $\lambda_1 d<3$
by Lemma~\ref{lemma:plane-curve-inequalities}(iv). This shows that
(\textbf{B}) holds.

Since $\lambda=\lambda_2=\frac{5}{2d}<\frac{3}{d}$ and
$\lambda_2(m_0+m_1+m_2)>3$ by Lemma~\ref{lemma:Skoda-2}, we have
$m_0+m_1+m_2>d$. In particular, the line $L$ must be an
irreducible component of the curve $C_d$.

Put $C_{d}=L+C_{d-1}$, where $C_{d-1}$ is a reduced curve in
$\mathbb{P}^2$ of degree $d-1$ such that $L$ is not its
irreducible component. Denote by $C_{d-1}^1$ its proper transform
on $S_1$, and denote by $C_{d-1}^2$ its proper transform on $S_2$.
Put $n_0=\mathrm{mult}_{P}(C_{d-1})$,
$n_1=\mathrm{mult}_{P_1}(C_{d-1}^1)$ and
$n_2=\mathrm{mult}_{P_2}(C_{d-1}^2)$. Then $(S_2,
(\lambda_2(n_0+n_1+2)-2)E_2+\lambda_2 L^1+\lambda_2 C_{d-1}^1)$ is
not Kawamata log terminal at $P_2$ and is Kawamata log terminal
outside of the point $P_2$. Then
Theorem~\ref{theorem:adjunction-2} implies
$$
\lambda_2\big(d-1-n_0-n_1\big)=\lambda_2 C_{d-1}^2\cdot L^2>1-\big(\lambda_2(n_0+n_1+2)-2\big)=3-\lambda_2(n_0+n_1+2),%
$$
which implies that $\frac{5(d+1)}{2d}=\lambda_2(d+1)>3$. Hence,
$d=4$. Then $\lambda=\lambda_2=\frac{5}{8}$.

By \eqref{equation:plane-curve-m0-m1-d},
$n_0+n_1\leqslant 2$. Thus, $n_0=n_1=n_2=1$, since
$$
\frac{5}{8}\big(n_0+n_1+n_2+3\big)=\lambda_2\big(m_0+m_1+m_2\big)>3
$$
by Lemma~\ref{lemma:Skoda-2}. Then $C_{3}$ is a irreducible cubic
curve that is smooth at $P$, the line $L$ is tangent to the curve
$C_3$ at the point $P$, and $P$ is an inflexion point of the cubic
curve $C_3$. This implies that $\mathrm{lct}_P(\mathbb{P}^2,
C_d)=\frac{2}{3}$. Since $\frac{2}{3}>\frac{5}{8}=\lambda_2$, the
log pair $(\mathbb{P}^2, \lambda_2 C_d)$ must be Kawamata log
terminal at the point $P$, which contradicts (\textbf{B}).
\end{proof}

Recall that $m_0+m_1\leqslant d$ by \eqref{equation:plane-curve-m0-m1-d}.
Then $m_1\leqslant\frac{d}{2}$, since $2m_1\leqslant m_0+m_1$.
Thus, we have
\begin{equation}
\label{equation:plane-curve-mult-S3}
\lambda\big(m_0+m_1+m_2\big)\leqslant\lambda\big(m_0+2m_1\big)\leqslant\lambda\frac{3d}{2}\leqslant\lambda_2\frac{3d}{2}=\frac{15}{4}<4.%
\end{equation}
Therefore, the log pair $(S_3,\lambda C^3_d+(\lambda(m_0+m_1)-2)E_2^3+(\lambda(m_0+m_1+m_2)-3)E_3)$ is Kawamata log terminal outside of the point $P_3$ by Lemma~\ref{lemma:log-pull-back}.

\begin{lemma}
\label{lemma:plane-curve-P3-E2-E3} One has $P_3\ne E_2^3\cap E_3$.
\end{lemma}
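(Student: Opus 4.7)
The plan is to argue by contradiction: assuming $P_3 = E_2^3 \cap E_3$, apply inversion of adjunction (Theorem~\ref{theorem:adjunction-2}) at $P_3$ along the exceptional curve $E_2^3$, and then deduce an inequality on $m_0 + 2m_1$ that is ruled out by the uniform numerical bound $\lambda d \le 5/2$.

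First I would check that the hypotheses of Theorem~\ref{theorem:adjunction-2} are met. Set $\mu_1 = \lambda(m_0+m_1) - 2$ and $\mu_2 = \lambda(m_0+m_1+m_2) - 3$. Both are positive: $\mu_1 > 0$ is Lemma~\ref{lemma:plane-curve-m0-m1}, and $\mu_2 > 0$ follows from Lemma~\ref{lemma:Skoda-2} applied on $S_2$ (since $E_1^2$ does not pass through $P_2$ by Lemma~\ref{lemma:plane-curve-P2-E1-E2}, the multiplicity at $P_2$ equals $\lambda m_2 + \mu_1 = \lambda(m_0+m_1+m_2) - 2$, which must exceed $1$). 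The log pair $(S_3, \lambda C_d^3 + \mu_1 E_2^3 + \mu_2 E_3)$ is not Kawamata log terminal at $P_3$ by construction, and it is Kawamata log terminal in a punctured neighbourhood of $P_3$ by Lemma~\ref{lemma:log-pull-back} applied to $f_3$---whose hypothesis $\lambda(m_0+m_1+m_2) - 2 < 2$ is exactly Lemma~\ref{lemma:plane-curve-mult-S3}. Moreover $E_2^3$ is smooth at $P_3$ and is not contained in the support of $\lambda C_d^3 + \mu_2 E_3$.

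Next I would extract the key inequality. Since $P_2$ is a smooth point of $E_2$ at which $C_d^2$ has multiplicity $m_2$, the standard blow-up computation gives $E_2^3 \cdot C_d^3 = m_1 - m_2$ and $E_2^3 \cdot E_3 = 1$. Theorem~\ref{theorem:adjunction-2} then yields
\[
1 \;<\; \mathrm{mult}_{P_3}\bigl((\lambda C_d^3 + \mu_2 E_3) \cdot E_2^3\bigr) \;\le\; \lambda(m_1 - m_2) + \mu_2 \;=\; \lambda(m_0 + 2m_1) - 3,
\]
so $\lambda(m_0 + 2m_1) > 4$.

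To finish, I would bound $m_0 + 2m_1 \le 3d/2$ using $m_0 + m_1 \le d$ (Lemma~\ref{lemma:plane-curve-m0-m1-d}) together with $m_1 \le m_0$, which gives $m_1 \le d/2$. In case~(\textbf{A}) one has $\lambda_1 d = 2 + 1/(d-2) \le 5/2$ for $d \ge 4$, and in case~(\textbf{B}) one has $\lambda_2 d = 5/2$. Hence $\lambda(m_0 + 2m_1) \le 3\lambda d/2 \le 15/4 < 4$, contradicting the displayed inequality. The only real obstacle is the bookkeeping required to verify the hypotheses of Theorem~\ref{theorem:adjunction-2}; the numerical contradiction $15/4 < 4$ is tight (equality $\lambda d = 5/2$ occurs in both cases at the boundary) and handles (\textbf{A}) and (\textbf{B}) uniformly without any further blow-up, in contrast to Lemma~\ref{lemma:plane-curve-P2-E1-E2}.
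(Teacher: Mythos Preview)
Your proof is correct and follows the same route as the paper: assume $P_3=E_2^3\cap E_3$, apply Theorem~\ref{theorem:adjunction-2} along $E_2^3$ to obtain $\lambda(m_0+2m_1)>4$, and contradict the bound $\lambda(m_0+2m_1)\leqslant 15/4$. The only cosmetic difference is that the paper packages the numerical bound as Lemma~\ref{lemma:plane-curve-mult-S3} and cites it, whereas you re-derive it inline; your additional bookkeeping (positivity of $\mu_1,\mu_2$ and the hypothesis of Lemma~\ref{lemma:log-pull-back}) makes explicit what the paper leaves implicit.
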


\begin{proof}
If $P_3=E_2^3\cap E_3$, then Theorem~\ref{theorem:adjunction-2}
gives
$$
\lambda\big(m_1-m_2\big)=\lambda C_d^3\cdot E_2^3>1-\Big(\lambda\big(m_0+m_1+m_2\big)-3\Big)=4-\lambda\big(m_0+m_1+m_2\big),%
$$
which implies that $\lambda(m_0+2m_1)>4$. But $\lambda
(m_0+2m_1)<4$ by \eqref{equation:plane-curve-mult-S3}.
\end{proof}

Let $f_4\colon S_4\to S_3$ be a blow up of the point $P_3$, and
let $E_4$ be its exceptional curve. Denote by $C^4_d$ the proper
transform on $S_4$ of the curve $C_d$, denote by $E_3^4$ the
proper transform on $S_4$ of the curve $E_3$, and denote by $L^4$
the proper transform of the line $L$ on the surface $S_4$. Then
$(S_4,\lambda
C^4_d+(\lambda(m_0+m_1+m_2)-3)E_3^4+(\lambda(m_0+m_1+m_2+m_3)-4)E_4)$
is not Kawamata log terminal at some point $P_4\in E_4$ by
Remark~\ref{remark:log-pull-back}.
Moreover, we have
$$
2L^4+E_1+2E_2+E_3\sim (f_1\circ f_2\circ f_3\circ f_4)^*\Big(\mathcal{O}_{\mathbb{P}^2}\big(2\big)\Big)-(f_2\circ f_3\circ f_4)^*\big(E_1\big)-(f_3\circ f_4)^*\big(E_2\big)-f_4^*\big(E_3\big)-E_4.%
$$

\begin{lemma}
\label{lemma:plane-curve-pencil} The linear system
$|2L^4+E_1+2E_2+E_3|$ is a pencil that does not have base points.
Moreover, every divisor in $|2L^4+E_1+2E_2+E_3|$ that is different
from $2L^4+E_1+2E_2+E_3$ is a  smooth curve whose image on
$\mathbb{P}^2$ is a smooth conic that is tangent to $L$ at the
point $P$.
\end{lemma}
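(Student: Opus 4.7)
My plan is to translate the linear system $|2L^4+E_1+2E_2+E_3|$ on $S_4$ into a linear system of conics in $\mathbb P^2$ with prescribed infinitely near base conditions, and then argue by direct local calculation. The pullback formula recorded immediately before the lemma gives
$$2L^4+E_1+2E_2+E_3\sim f^*\mathcal O_{\mathbb P^2}(2)-E_1-2E_2-3E_3-4E_4,$$
with $f=f_1\circ f_2\circ f_3\circ f_4$. Consequently, sections of $\mathcal O_{S_4}(2L^4+E_1+2E_2+E_3)$ correspond bijectively to conics $Q\subset\mathbb P^2$ whose strict transforms satisfy $m_0\ge 1$, $m_0+m_1\ge 2$, $m_0+m_1+m_2\ge 3$, and $m_0+m_1+m_2+m_3\ge 4$, where $m_i=\mathrm{mult}_{P_i}(Q^i)$; equivalently, conics whose strict transforms pass through the whole infinitely near cluster $P,P_1,P_2,P_3$.

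To show this linear system is a pencil, I choose affine coordinates $(x_1,x_2)$ around $P$ with $L:x_1=0$, write a general conic as $F=\sum_{i+j\le 2}a_{ij}x_1^ix_2^j$, and read off the four infinitely near conditions by tracking the strict transform through local charts adapted to the chain $P,P_1,P_2,P_3$. The condition at $P$ gives $a_{00}=0$; the condition at $P_1\in L^1$ (the point corresponding to the tangent direction of $L$) forces the tangent cone of $Q$ at $P$ to be a multiple of $x_1$, hence $a_{01}=0$. The remaining two conditions at $P_2$ and $P_3$ become linear relations of the form $a_{10}+d_0\,a_{02}=0$ and $h_0\,a_{02}+d_0\,a_{11}=0$, where $d_0\ne 0$ and $h_0$ are the affine parameters recording the positions of $P_2$ and $P_3$ on the successive exceptional divisors (the nonvanishing of $d_0$ is precisely the content of Lemmas~\ref{lemma:plane-curve-P2-E1-E2} and \ref{lemma:plane-curve-P2-L-E2}). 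These four conditions are manifestly independent, cutting out a two-dimensional subspace of the six-dimensional space of conics. Hence $|2L^4+E_1+2E_2+E_3|$ is a pencil.

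To identify explicit generators and establish base-point-freeness, I exhibit the doubled line $2L^4+E_1+2E_2+E_3$ itself (coming from $F=x_1^2$) and the strict transform $Q_0^4$ of a smooth conic $Q_0$ obtained by solving the remaining two linear conditions; a direct determinant computation of its associated $3\times 3$ symmetric matrix gives a nonzero multiple of $d_0^2$, so $Q_0$ is indeed smooth. For base-point-freeness, I compute $L^4\cdot Q_0^4=0$ on $S_4$: in $\mathbb P^2$ one has $Q_0\cdot L=2$, concentrated at $P$ since $L$ is tangent to $Q_0$ there; after blowing up $P$ the intersection drops to $L^1\cdot Q_0^1=1$, concentrated at $P_1$, and after blowing up $P_1$ it drops further to $L^2\cdot Q_0^2=0$ on $S_2$. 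Since $P_2\notin L^2$ and $P_3\notin L^3$, no intersection is reintroduced, so $L^4\cap Q_0^4=\emptyset$ on $S_4$.

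Finally, every other member of the pencil corresponds to $\alpha Q_0+\beta x_1^2$ for some $[\alpha:\beta]\in\mathbb P^1$. Because any member passes through $P=[0:0:1]$ with tangent $L:x=0$, its associated $3\times 3$ symmetric matrix has vanishing $(z,z)$- and $(y,z)$-entries, so the cofactor of the $(x,x)$-entry vanishes identically on the pencil. The rank-one update formula then yields $\det(\alpha M_{Q_0}+\beta M_{x_1^2})=\alpha^3\det M_{Q_0}$, which vanishes only at $\alpha=0$. Thus the doubled line is the unique singular member, and every other member is the strict transform of a smooth conic, automatically tangent to $L$ at $P$ by the linear conditions it satisfies. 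The main technical step is the coordinate-level verification of the four independent linear conditions in the second paragraph, which requires carefully following multiplicities through four successive infinitely near blow-ups; everything else then follows formally.
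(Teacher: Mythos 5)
Your proof is correct and rests on exactly the same inputs as the paper's: the paper's entire proof is the one-line observation that all assertions follow from $P_2\notin E_1^2\cup L^2$ and $P_3\notin E_2^3$, and your coordinate computation of the four infinitely near linear conditions is a faithful (and more detailed) verification of precisely that claim. The only cosmetic omission is that base-point-freeness requires the smooth member $Q_0^4$ to be disjoint from the exceptional components $E_1$, $E_2$, $E_3$ of the reducible generator as well as from $L^4$; this follows by the same multiplicity bookkeeping you use for $L^4$, or at once from the fact that the class $2L^4+E_1+2E_2+E_3$ has self-intersection zero while $Q_0^4$ is irreducible and not a component of the reducible member.
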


\begin{proof}
All assertions follows from $P_2\not\in E_1^2\cup L^2$ and
$P_3\not\in E_2^3$.
\end{proof}

Let $C_2^4$ be a general curve in $|2L^4+E_1+2E_2+E_3|$. Denote by
$C_2$ its image on $\mathbb{P}^2$, and denote by $\mathcal{L}$ the
pencil generated by $2L$ and $C_2$. Then $P$ is the only base
point of the pencil $\mathcal{L}$, and every conic in
$\mathcal{L}$ except $2L$ and $C_2$ intersects $C_2$ at $P$ with
multiplicity $4$ (cf. \cite[Remark~1.14]{ChePark13}).

\begin{lemma}
\label{lemma:plane-curve-mult-S4} One has
$m_0+m_1+m_2+m_3\leqslant m_0+m_1+2m_2\leqslant\frac{5}{\lambda}$.
If $m_0+m_1+m_2+m_3=\frac{5}{\lambda}$, then $d$ is even and $C_d$
is a union of $\frac{d}{2}\geqslant 2$ smooth conics in
$\mathcal{L}$, where $d=4$ if (\textbf{A}) holds.
\end{lemma}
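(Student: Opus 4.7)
The plan is to prove the two inequalities by combining the already-known bound $m_0+m_1 \leq d$ from Lemma~\ref{lemma:plane-curve-m0-m1-d} with the monotonicity of multiplicities along the chain of blowups, and then to pin down the equality case by intersecting $C_d^4$ against a general member of the pencil $\mathcal{L}$.

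First I would record the monotonicity $m_0 \geq m_1 \geq m_2 \geq m_3$, each coming from the fact that the multiplicity of a proper transform at a point of the exceptional divisor is bounded by the previous multiplicity (for instance, $m_2 \leq C_d^2 \cdot E_2 = m_1$). This immediately gives the first inequality. For the second inequality, $m_1 \leq m_0$ together with Lemma~\ref{lemma:plane-curve-m0-m1-d} gives $2m_1 \leq m_0+m_1 \leq d$, hence $m_1 \leq d/2$, and so $m_2 \leq d/2$ as well. Therefore
$$
m_0+m_1+2m_2 \;\leq\; (m_0+m_1) + 2m_2 \;\leq\; d + 2 \cdot \frac{d}{2} \;=\; 2d.
$$
A direct calculation shows $\frac{5}{\lambda_2} = 2d$ and $\frac{5}{\lambda_1} - 2d = \frac{d(d-4)}{2d-3} \geq 0$ for $d \geq 4$, so $2d \leq \frac{5}{\lambda}$ in both cases (\textbf{A}) and (\textbf{B}).

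For the equality statement, suppose $m_0+m_1+m_2+m_3 = \frac{5}{\lambda}$. The inequality chain forces $\frac{5}{\lambda} \leq 2d$ to be an equality, which in case (\textbf{A}) means $d = 4$. In either case every intermediate inequality becomes sharp, so $m_3 = m_2$, $m_0+m_1 = d$, and $m_2 = d/2$; together with $m_2 \leq m_1 \leq m_0$ this forces $m_0 = m_1 = m_2 = m_3 = d/2$, hence $d$ is even with $d/2 \geq 2$. To identify $C_d$, I would take a general smooth conic $C_2 \in \mathcal{L}$; by Lemma~\ref{lemma:plane-curve-pencil} its proper transform $C_2^4$ is a general fibre of the morphism $\pi \colon S_4 \to \mathbb{P}^1$ defined by $|2L^4+E_1+2E_2+E_3|$, and has multiplicities $1,1,1,1$ at $P,P_1,P_2,P_3$. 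Using $C_2^4 \cdot E_i^4 = 0$ for $i = 1, 2, 3$ and $C_2^4 \cdot E_4 = 1$, the projection formula gives
$$
C_d^4 \cdot C_2^4 \;=\; C_d \cdot C_2 - (m_0+m_1+m_2+m_3) \;=\; 2d - 2d \;=\; 0.
$$
Hence every irreducible component of $C_d^4$ lies in a fibre of $\pi$; since exceptional curves cannot appear in the proper transform $C_d^4$, every component of $C_d$ in $\mathbb{P}^2$ is either a smooth conic in $\mathcal{L}$ or the line $L$.

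The main obstacle I anticipate is ruling out $L \subset C_d$. If $L$ were a component of $C_d$, writing $C_d = L + C_{d-1}$ and using $P_2 \notin L^2$ from Lemma~\ref{lemma:plane-curve-P2-L-E2}, the multiplicities split as $m_2(C_{d-1}) = m_2 = d/2$ and $m_1(C_{d-1}) = m_1 - 1 = d/2 - 1$, violating the monotonicity $m_2(C_{d-1}) \leq m_1(C_{d-1})$. Thus $C_d$ is a sum of smooth conics from $\mathcal{L}$; since $C_d$ is reduced of degree $d$, it consists of exactly $d/2 \geq 2$ distinct such conics, completing the proof.
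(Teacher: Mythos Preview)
Your argument is correct and follows essentially the same route as the paper. The inequality chain is identical: both you and the paper use $m_3\leqslant m_2$, $2m_2\leqslant m_0+m_1\leqslant d$ (from Lemma~\ref{lemma:plane-curve-m0-m1-d}), and $2d\leqslant 5/\lambda$. For the equality case, the paper observes directly that $C_d^4\sim\frac{d}{2}(2L^4+E_1+2E_2+E_3)$ and then uses that the pencil is free and $C_d^4$ is reduced to write $C_d^4$ as a sum of $d/2$ distinct fibres; your computation $C_d^4\cdot C_2^4=2d-(m_0+m_1+m_2+m_3)=0$ reaches the same conclusion. The only genuine difference is in excluding $L$: the paper notes that the unique reducible fibre $2L^4+E_1+2E_2+E_3$ is non-reduced and hence cannot occur in the reduced curve $C_d^4$, whereas you argue by the contradiction $n_2=d/2>d/2-1=n_1$. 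Both arguments are valid; the paper's is marginally shorter since the reducedness of $C_d$ is already doing the work.
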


\begin{proof}
By \eqref{equation:plane-curve-m0-m1-d}, we have $m_2+m_3\leqslant
2m_2\leqslant m_0+m_1\leqslant d$. This gives
$$
m_0+m_1+m_2+m_3\leqslant m_0+m_1+2m_2\leqslant 2d=\frac{5}{\lambda_2}\leqslant\frac{5}{\lambda}.%
$$

To complete the proof, we may assume that
$m_0+m_1+m_2+m_3=\frac{5}{\lambda}$. Then all inequalities above
must be equalities. Thus, we have $m_2=m_3=\frac{d}{2}$ and
$\lambda_1=\lambda_2$. In particular, if (\textbf{A}) holds, then
$d=4$, because $\lambda_1<\lambda_2=\frac{5}{2d}$ for $d\geqslant
5$ by Lemma~\ref{lemma:plane-curve-inequalities}(vii). Moreover,
since $m_0\geqslant m_1\geqslant m_2=\frac{d}{2}$ and
$m_0+m_1\leqslant d$, we see that $m_0=m_1=\frac{d}{2}$. Thus, $d$
is even and
$$
C_d^4\sim \frac{d}{2}\Big(2L^4+E_1+2E_2+E_3\Big),
$$
where $d=4$ if (\textbf{A}) holds. Since $|2L^4+E_1+2E_2+E_3|$ is
a free pencil and $C_d^4$ is reduced, it follows from
Lemma~\ref{lemma:plane-curve-pencil} that $C_d^4$ is a union of
$\frac{d}{2}$ smooth curves in $|2L^4+E_1+2E_2+E_3|$. In
particular, $L^4$ is not an irreducible component of $C_d^4$.
Thus, the curve $C_d$ is a union of $\frac{d}{2}$ smooth conics in
$\mathcal{L}$, where $d=4$ if (\textbf{A}) holds.
\end{proof}

We see that $m_0+m_1+m_2+m_3\leqslant\frac{5}{\lambda}$.
Moreover, if $m_0+m_1+m_2+m_3=\frac{5}{\lambda}$, then $C_d$ is an even P\l oski curve.
Furthermore, if $m_0+m_1+m_2+m_3=\frac{5}{\lambda}$ and (\textbf{A}) holds, then $d=4$.
Thus, to prove Theorems~\ref{theorem:plane-curve} and \ref{theorem:plane-curve-stability}, we may assume that
$$
m_0+m_1+m_2+m_3<\frac{5}{\lambda}.
$$
Let us show that this assumption leads to a contradiction.
By Lemma~\ref{lemma:log-pull-back},  this inequality implies that the log pair $(S_4,\lambda C^4_d+(\lambda(m_0+m_1+m_2)-3)E_3^4+(\lambda(m_0+m_1+m_2+m_3)-4)E_4)$ is Kawamata log terminal outside of the point $P_4$.

\begin{lemma}
\label{lemma:plane-curve-P4-E3-E4} One has $P_4\ne E_3^4\cap E_4$.
\end{lemma}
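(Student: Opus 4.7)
The plan is to argue by contradiction: assuming $P_4=E_3^4\cap E_4$, I would peel off $E_3^4$ via Inversion of Adjunction and derive a numerical inequality that clashes with the bound $\lambda\leqslant\lambda_2=\frac{5}{2d}$.

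More precisely, I would apply Theorem~\ref{theorem:adjunction-2} to the curve $Z=E_3^4$ inside the log pair on $S_4$, with residual divisor
$$
D=\lambda C^4_d+\big(\lambda(m_0+m_1+m_2+m_3)-4\big)E_4.
$$
Two admissibility checks are needed. First, the coefficient of $E_3^4$ is strictly less than $1$ by Lemma~\ref{lemma:plane-curve-mult-S3}, since $\lambda(m_0+m_1+m_2)\leqslant\lambda(m_0+2m_1)<4$. Second, $D$ is effective because $\lambda(m_0+m_1+m_2+m_3)>4$: the log pair on $S_3$ is not Kawamata log terminal at $P_3$ but is Kawamata log terminal in a punctured neighbourhood of $P_3$, so applying Lemma~\ref{lemma:Skoda-2} there (and using $P_3\notin E_2^3$ from Lemma~\ref{lemma:plane-curve-P3-E2-E3}) gives $\lambda m_3+\lambda(m_0+m_1+m_2)-3>1$, as required.

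Next I would compute the global intersection $C^4_d\cdot E_3^4=m_2-m_3$, by exactly the same pullback manipulation that yielded $C^3_d\cdot E_2^3=m_1-m_2$ in the proof of Lemma~\ref{lemma:plane-curve-P3-E2-E3} (use $E_3^4=f_4^*E_3-E_4$ and $C^4_d=f_4^*C^3_d-m_3E_4$). Since $E_3^4$ and $E_4$ meet transversally at $P_4$, Theorem~\ref{theorem:adjunction-2} produces
$$
\lambda(m_2-m_3)+\lambda(m_0+m_1+m_2+m_3)-4\ \geqslant\ \mathrm{mult}_{P_4}(D\cdot E_3^4)>1,
$$
which rearranges to $\lambda(m_0+m_1+2m_2)>5$.

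Finally, I would close with the elementary bound $m_0+m_1+2m_2\leqslant 2d$: the inequalities $m_2\leqslant m_1\leqslant m_0$ are built into the construction (each $P_i$ lies on the exceptional curve of the previous blow up), so $2m_2\leqslant m_0+m_1$, and Lemma~\ref{lemma:plane-curve-m0-m1-d} gives $m_0+m_1\leqslant d$. Hence $\lambda(m_0+m_1+2m_2)\leqslant 2d\lambda\leqslant 2d\lambda_2=5$, contradicting the strict inequality above. The main subtlety is not the final numerical estimate but the bookkeeping ensuring that at each step the coefficients of the exceptional divisors remain in the range where Inversion of Adjunction applies; this is controlled by Lemmas~\ref{lemma:plane-curve-P2-E1-E2}, \ref{lemma:plane-curve-P3-E2-E3}, \ref{lemma:plane-curve-mult-S3} and~\ref{lemma:Skoda-2}.
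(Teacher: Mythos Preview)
Your argument is correct and matches the paper's proof essentially line for line: both apply Theorem~\ref{theorem:adjunction-2} to $E_3^4$ at the assumed point $P_4=E_3^4\cap E_4$, obtain $\lambda(m_0+m_1+2m_2)>5$, and contradict the bound $m_0+m_1+2m_2\leqslant\frac{5}{\lambda}$. The only cosmetic difference is that the paper invokes Lemma~\ref{lemma:plane-curve-mult-S4} for that last bound, whereas you rederive it from $m_0+m_1\leqslant d$ and $\lambda\leqslant\lambda_2=\frac{5}{2d}$; your extra care in checking effectiveness of the residual divisor (via Lemma~\ref{lemma:Skoda-2} on $S_3$) is a point the paper leaves implicit.
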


\begin{proof}
By Lemma~\ref{lemma:plane-curve-mult-S4},  $m_0+m_1+2m_2\leqslant
\frac{5}{\lambda}$. If $P_4=E_3^4\cap E_4$, then
Theorem~\ref{theorem:adjunction-2} gives
$$
\lambda\big(m_2-m_3\big)=\lambda C_d^4\cdot E_3^4>5-\lambda\big(m_0+m_1+m_2+m_3\big),%
$$ which implies that
$m_0+m_1+2m_2>\frac{5}{\lambda}$. This shows that $P_4\ne
E_3^4\cap E_4$.
\end{proof}

Thus, the log pair $(S_4,\lambda C^4_d+(\lambda(m_0+m_1+m_2+m_3)-4)E_4)$ is not Kawamata log terminal at $P_4$ and is Kawamata log terminal outside of the point $P_4$.

Let $Z^4$ be the curve in $|2L^4+E_1+2E_2+E_3|$ that passes through the point $P_4$.
Then $Z^4$ is a smooth irreducible curve by Lemma~\ref{lemma:plane-curve-P2-L-E2}.
Denote by $Z$ the proper transform of this curve on $\mathbb{P}^2$. Then $Z$ is a smooth conic in the pencil $\mathcal{L}$ by Lemma~\ref{lemma:plane-curve-pencil}.
If $Z$ is not an irreducible component of the curve $C_d$, then
$$
2d-\big(m_0+m_1+m_2+m_3\big)=Z^4\cdot C_d^4\geqslant \mathrm{mult}_{P_4}(C_d^4).
$$
On the other hand, it follows from Lemma~\ref{lemma:Skoda-2} that
$$
\mathrm{mult}_{P_4}(C_d^4)+m_0+m_1+m_2+m_3>\frac{5}{\lambda}.
$$
This shows that $Z$ is an irreducible component of the curve $C_d$, since $\lambda\leqslant\lambda_2=\frac{5}{2d}$.

Put $C_{d}=Z+C_{d-2}$, where $C_{d-2}$ is a reduced curve in $\mathbb{P}^2$ of degree $d-2$ such that $Z$ is not its irreducible component.
Denote by $C_{d-2}^1$, $C_{d-2}^2$, $C_{d-2}^3$ and $C_{d-2}^4$ its proper transforms on the surfaces $S_1$, $S_2$, $S_3$ and $S_4$, respectively.
Put
$n_0=\mathrm{mult}_{P}(C_{d-2})$,
$n_1=\mathrm{mult}_{P_1}(C_{d-2}^1)$,
$n_2=\mathrm{mult}_{P_2}(C_{d-2}^2)$,
$n_3=\mathrm{mult}_{P_3}(C_{d-2}^3)$ and
$n_4=\mathrm{mult}_{P_4}(C_{d-2}^4)$.
Then
$$
\Big(S_4, \lambda C_{d-2}^4+\lambda Z^4+(\lambda(n_0+n_1+n_2+n_3+4)-4)E_4\Big)
$$
is not Kawamata log terminal at $P_4$ and is Kawamata log terminal outside of the point $P_4$.
Thus, applying Theorem~\ref{theorem:adjunction-2}, we get
$$
\lambda\Big(2\big(d-2\big)-n_0-n_1-n_2-n_3\Big)=\lambda C_{d-2}^4\cdot Z^4>5-\lambda\big(n_0+n_1+n_2+n_3+4\big),%
$$
which implies that $\lambda>\frac{5}{2d}$. This is impossible,
since $\lambda\leqslant\lambda_2=\frac{5}{2d}$.

The obtained
contradiction completes the proof of
Theorems~\ref{theorem:plane-curve} and
\ref{theorem:plane-curve-stability}.

\section{Smooth surfaces in $\mathbb{P}^3$}
\label{section:proof}

The purpose of this section is to prove Theorem~\ref{theorem:main}. 
Let $S$ be a smooth surface in $\mathbb{P}^3$ of degree $d\geqslant 3$, 
let $H_{S}$ be its hyperplane section, let $P$ be a point in $S$, 
let $T_P$ be the hyperplane section of the surface $S$ that is singular at $P$.
Note that $T_P$ is reduced by Lemma~\ref{lemma:Pukhlikov}. 
Put $\lambda=\frac{2d-3}{d(d-2)}$.
Then Theorem~\ref{theorem:main} follows from Theorem~\ref{theorem:plane-curve}, Remark~\ref{remark:convexity} and 

\begin{proposition}
\label{proposition:technical} Let $D$ be any effective
$\mathbb{Q}$-divisor on $S$ such that $D\sim_{\mathbb{Q}} H_S$.
Suppose that $\mathrm{Supp}(D)$ does not contain at least one
irreducible component of the curve $T_P$. Then $(S,\lambda D)$ is
log canonical at $P$.
\end{proposition}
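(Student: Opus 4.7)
The plan is to argue by contradiction: suppose that $(S, \lambda D)$ is not log canonical at $P$, and let $Z$ be an irreducible component of $T_P$ with $Z \not\subset \mathrm{Supp}(D)$. By Lemma~\ref{lemma:Skoda-2}, the multiplicity $m_0 := \mathrm{mult}_P(D)$ satisfies $m_0 > 1/\lambda = d(d-2)/(2d-3)$, which exceeds $1$ for every $d \geq 4$; the case $d = 3$ is already contained in Corollary~\ref{corollary:cubic-surfaces}. If $Z$ were a line through $P$, the Bezout inequality $m_0 \cdot \mathrm{mult}_P(Z) \leq D \cdot Z = \deg Z = 1$, combined with $m_0 > 1$, would give an immediate contradiction, so I may assume either $P \in Z$ with $\deg Z \geq 2$, or else $P \notin Z$.

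The core of the argument is an iterated blow-up calculation that closely parallels the proof of Theorems~\ref{theorem:plane-curve} and \ref{theorem:plane-curve-stability} in Section~\ref{section:plane-curves}, with $\lambda D$ playing the role of $\lambda_1 C_d$ there (note $\lambda$ coincides with $\lambda_1$). Let $f_1 \colon S_1 \to S$ be the blow-up of $P$ with exceptional curve $E_1$; by Remark~\ref{remark:log-pull-back} the pair $(S_1, \lambda D^1 + (\lambda m_0 - 1) E_1)$ is not log canonical at some $P_1 \in E_1$. Continuing to blow up points $P_k$ at which non-log-canonicity persists, and setting $m_k := \mathrm{mult}_{P_k}(D^k)$, Theorems~\ref{theorem:adjunction-2} and \ref{theorem:Trento-2} yield lower bounds on the cumulative sums $m_0 + m_1 + \cdots + m_k$ that are direct analogues of Lemmas~\ref{lemma:plane-curve-m0-m1}, \ref{lemma:plane-curve-P2-E1-E2}, and \ref{lemma:plane-curve-P4-E3-E4}. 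When $P \in Z$, intersecting with the proper transforms $Z^k$ supplies the matching upper bound
$$
\sum_{i=0}^{k} m_i \cdot \mathrm{mult}_{P_i}\bigl(Z^i\bigr) \;\leq\; D \cdot Z \;=\; \deg Z \;\leq\; d,
$$
and the arithmetic inequalities for $\lambda = \tfrac{2d-3}{d(d-2)}$ assembled in Lemma~\ref{lemma:plane-curve-inequalities}, together with Pukhlikov's coefficient bound (Lemma~\ref{lemma:Pukhlikov}), produce the desired contradiction.

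The main obstacle is the case $P \notin Z$, in which the missing global component of $T_P$ offers no direct local constraint at $P$. To handle it, I exploit that $S$ is smooth of dimension two near $P$, so $\mathrm{lct}_P(S, T_P) = \mathrm{lct}_P(\mathbb{P}^2, T_P)$, and invoke the plane-curve classification of Lemma~\ref{lemma:plane-curve-Kuwata} and Theorem~\ref{theorem:plane-curve}. If $\mathrm{lct}_P(S, T_P) \geq \lambda$, then $(S, \lambda T_P)$ is log canonical at $P$, and a careful comparison of the local structures of $D$ and $T_P$ at $P$ --- using Pukhlikov's bound (Lemma~\ref{lemma:Pukhlikov}) and the hypothesis $Z \not\subset \mathrm{Supp}(D)$ --- transfers the log canonicity from $T_P$ to $D$. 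Otherwise $T_P$ has one of the exceptional singularity types at $P$ from Lemma~\ref{lemma:plane-curve-Kuwata}, whose global irreducible decomposition is so tightly restricted that a short case analysis either produces a local branch of $T_P$ at $P$ that is missed by $\mathrm{Supp}(D)$ --- returning us to the previous case $P \in Z$ --- or verifies the conclusion directly. The quantitative transfer of log canonicity at the boundary value $\lambda = \tfrac{2d-3}{d(d-2)}$, rather than at the smaller $\mathrm{lct}_P(S, T_P)$, is the most delicate step.
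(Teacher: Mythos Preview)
Your proposal contains a fundamental gap in the core blow-up argument. You write that when $P \in Z$ (where $Z$ is the component of $T_P$ missing from $\mathrm{Supp}(D)$), intersecting with the proper transforms $Z^k$ gives the upper bound $\sum_i m_i \cdot \mathrm{mult}_{P_i}(Z^i) \leq D \cdot Z$. This inequality is correct, but it is useless unless the proper transforms $Z^i$ actually pass through the points $P_i$. There is no reason for this: the centers $P_1, P_2, \ldots$ are determined by where the log pull-backs of $\lambda D$ fail to be log canonical, and $D$ is an arbitrary $\mathbb{Q}$-divisor unrelated to $T_P$ beyond linear equivalence. In fact the paper proves exactly the opposite (Lemma~\ref{lemma:proof-P1-Tp}): already after the first blow-up one has $P_1 \notin T_P^1$, so no component of $T_P$---in particular not your $Z$---passes through $P_1$. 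Your upper bound then degenerates to $m_0 \cdot \mathrm{mult}_P(Z) \leq \deg Z$, which says nothing about $m_1, m_2, \ldots$ and cannot contradict the lower bounds coming from Lemmas~\ref{lemma:Skoda} or \ref{lemma:plane-curve-m0-m1}. The paper's actual strategy is quite different: the missing component $T_n$ is used only on $S$ itself, to bound the coefficients of the other $T_i$ in $D$ (Lemma~\ref{lemma:proof-Pukhlikov-refined}), and these bounds combine with the \emph{negative} self-intersections $T_i^2 = -d_i(d-d_i-1)$ on $S$ (Lemma~\ref{lemma:proof-T-intersections}) to control multiplicities. After two blow-ups, the paper constructs a \emph{new} hyperplane section $Z$ via the pencil $|T_P^2 + E_1^2|$ (Lemma~\ref{lemma:proof-pencil}) whose proper transform genuinely passes through $P_2$; this is the analogue of the conic in Section~\ref{section:plane-curves}, not any component of $T_P$.

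Your treatment of the case $P \notin Z$ is also a genuine gap. The claim that log canonicity of $(S, \lambda T_P)$ at $P$ can be ``transferred'' to $(S, \lambda D)$ via ``a careful comparison of the local structures'' and Lemma~\ref{lemma:Pukhlikov} is not an argument: there is no mechanism by which knowing one divisor in $|H_S|$ is log canonical at $P$ forces another to be, short of the convexity of Remark~\ref{remark:convexity}, and that remark runs in the wrong direction here. Finally, invoking Corollary~\ref{corollary:cubic-surfaces} for $d=3$ is circular, since that corollary is deduced from Theorem~\ref{theorem:main}, which in turn rests on Proposition~\ref{proposition:technical}; the paper explicitly proves the $d=3$ case from scratch.
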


For $d=3$, this result is just \cite[Corollary~1.13]{ChePark13}.
In the remaining part of the section, we will prove Proposition~\ref{proposition:technical}. 
Note that we will do this \emph{without} using \cite[Corollary~1.13]{ChePark13}. Let us start with

\begin{lemma}
\label{lemma:proof-inequalities} The following assertions hold:
\begin{enumerate}
\item[(i)] $\lambda\leqslant \frac{2}{d-1}$,

\item[(ii)] if $d\geqslant 5$, then $\lambda\leqslant\frac{3}{d+1}$,%

\item[(iii)] if $d\geqslant 5$, then $\lambda\leqslant\frac{4}{d+3}$,%

\item[(iv)] If $d\geqslant 6$, then $\lambda\leqslant\frac{3}{d+2}$,%

\item[(v)] $\lambda\leqslant \frac{4}{d+1}$,

\item[(vi)] $\lambda\leqslant \frac{3}{d}$.
\end{enumerate}
\end{lemma}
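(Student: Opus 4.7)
The lemma is a purely elementary arithmetic statement: six inequalities comparing the specific rational function $\lambda = \frac{2d-3}{d(d-2)}$ to other simple rational functions of $d$. My plan is to reduce each part to a polynomial inequality in $d$ by clearing denominators (all relevant denominators are positive for $d\geqslant 3$), and then either factor or evaluate at the boundary value of $d$ to conclude. There is no substantive geometric content, so I expect the entire argument to be a short calculation.

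For (i), the inequality $\frac{2d-3}{d(d-2)}\leqslant\frac{2}{d-1}$ cross-multiplies to $(2d-3)(d-1)\leqslant 2d(d-2)$, which simplifies to $d\geqslant 3$. For (vi), $\frac{2d-3}{d(d-2)}\leqslant\frac{3}{d}$ reduces directly to $2d-3\leqslant 3(d-2)$, again $d\geqslant 3$. For (v), cross-multiplication yields $(2d-3)(d+1)\leqslant 4d(d-2)$, equivalent to $(2d-1)(d-3)\geqslant 0$, which holds for $d\geqslant 3$.

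The remaining three parts depend on the stronger hypothesis. For (ii), $\frac{2d-3}{d(d-2)}\leqslant\frac{3}{d+1}$ becomes $d^2-5d+3\geqslant 0$; the roots of $d^2-5d+3$ lie below $5$, so the inequality holds for $d\geqslant 5$ (verified at $d=5$: $3\geqslant 0$). For (iii), the analogous manipulation gives $2d^2-11d+9\geqslant 0$, which holds at $d=5$ ($4\geqslant 0$) and the quadratic is increasing beyond that. For (iv), the inequality becomes $d^2-7d+6=(d-1)(d-6)\geqslant 0$, which holds precisely when $d\geqslant 6$ (in the relevant range).

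There is no real obstacle; the only thing to be careful about is keeping track of the signs of the denominators $d$, $d-1$, $d-2$, $d+1$, $d+2$, $d+3$ when cross-multiplying, which are all positive under the stated hypotheses on $d$. Thus the proof is simply a matter of writing out the six elementary verifications, preferably presenting each as a single equivalence of polynomial inequalities together with a factorization or a numerical check at the boundary value.
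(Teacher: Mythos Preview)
Your proposal is correct and follows essentially the same approach as the paper: the paper writes each bound as $\text{(RHS)} = \lambda + \frac{N(d)}{D(d)}$ with explicit numerator $N(d)$ and positive denominator, and then observes $N(d)\geqslant 0$ under the stated hypothesis, which is exactly what your cross-multiplication produces (the same six polynomials $d-3$, $d^2-5d+3$, $2d^2-11d+9$, $(d-1)(d-6)$, $(2d-1)(d-3)$, $d-3$). There is nothing to add.
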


\begin{proof}
The equality $\frac{2}{d-1}=\lambda+\frac{d-3}{d(d-1)(d-2)}$
implies (i), $\frac{4}{d+1}=\lambda+\frac{d^2-5d+3}{d(d+1)(d-2)}$
implies (ii), and
$\frac{4}{d+3}=\lambda+\frac{2d^2-11d+9}{d(d+3)(d-2)}$ implies
(iii). Similarly, (iv) follows from
$\frac{3}{d+2}=\lambda+\frac{d^2-7d+6}{d(d^2-4)}$, (v) follows
from $\frac{4}{d+1}=\lambda+\frac{2d^2-7d+3}{d(d+1)(d-2)}$, and
(vi) follows from $\frac{3}{d}=\lambda+\frac{d-3}{d(d-2)}$.
\end{proof}

Let $n$ be the number of irreducible components of the curve $T_P$. 
Write 
$$
T_P=T_1+\cdots+T_n,
$$ 
where each $T_i$ is an irreducible curve on the surface $S$. 
For every curve $T_i$, we denote its degree by $d_i$, and we put $t_i=\mathrm{mult}_{P}(T_i)$.

\begin{lemma}
\label{lemma:proof-T-intersections} Suppose that $n\geqslant 2$.
Then
$$
T_i\cdot T_i=-d_i(d-d_i-1)
$$
for every $T_i$, and $T_i\cdot T_j=d_id_j$ for every $T_i$ and $T_j$ such that $T_i\ne T_j$.
\end{lemma}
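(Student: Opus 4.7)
The plan is to first establish the off-diagonal formula $T_i\cdot T_j=d_id_j$ via B\'ezout's theorem in the ambient hyperplane, and then read off the self-intersection from the linear equivalence $T_P\sim H_S$.

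For the off-diagonal intersections, let $H\subset\mathbb{P}^3$ be the hyperplane with $T_P=H\cap S$ (as schemes, the intersection being reduced by Lemma~\ref{lemma:Pukhlikov} and smoothness of $S$). At any point $p\in T_i\cap T_j$, the surfaces $S$ and $H$ are both smooth and meet transversally, so if $R=\mathcal{O}_{\mathbb{P}^3,p}$, with local equations $s$ and $h$ for $S$ and $H$, then $R/(s,h)=\mathcal{O}_{T_P,p}$ is a one-dimensional complete intersection. The minimal prime $\mathfrak{P}_i\subset R$ corresponding to $T_i$ therefore admits two presentations
$$
\mathfrak{P}_i=(s,f_i)=(h,g_i),
$$
where $f_i\in\mathcal{O}_{S,p}$ and $g_i\in\mathcal{O}_{H,p}$ are local equations of $T_i$ on $S$ and on $H$ respectively. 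Since $s\in\mathfrak{P}_i$ and $s\in\mathfrak{P}_j$, one gets $(s,f_i,f_j)=\mathfrak{P}_i+\mathfrak{P}_j=(h,g_i,g_j)$, so
$$
(T_i\cdot T_j)_p^{S}=\dim_{\mathbb{C}} R/(s,f_i,f_j)=\dim_{\mathbb{C}} R/(h,g_i,g_j)=(T_i\cdot T_j)_p^{H}.
$$
Summing over $p$ and applying B\'ezout in the plane $H\cong\mathbb{P}^2$ gives $T_i\cdot T_j=d_id_j$ as desired.

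For the self-intersection, I would note that on $S$ the hyperplane section satisfies $H_S\cdot T_i=\deg_{\mathbb{P}^3}(T_i)=d_i$, and $T_P\sim H_S$, so
$$
d_i=T_P\cdot T_i=T_i^2+\sum_{j\ne i}T_j\cdot T_i=T_i^2+d_i\sum_{j\ne i}d_j=T_i^2+d_i(d-d_i),
$$
since $\sum_j d_j=\deg T_P=d$. Solving yields $T_i^2=-d_i(d-d_i-1)$.

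The only non-formal step is the identification $(T_i\cdot T_j)_p^S=(T_i\cdot T_j)_p^H$, which is the main obstacle in the sense that one must verify that the two a priori different length computations (one inside $\mathcal{O}_{S,p}$, one inside $\mathcal{O}_{H,p}$) both equal the length of $R/(\mathfrak{P}_i+\mathfrak{P}_j)$. Once this compatibility is established, the lemma reduces to B\'ezout in $\mathbb{P}^2$ and a one-line linear-equivalence computation.
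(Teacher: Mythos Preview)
Your proof is correct and follows essentially the same route as the paper: first identify $(T_i\cdot T_j)_S=(T_i\cdot T_j)_H=d_id_j$ via B\'ezout in the hyperplane $H\cong\mathbb{P}^2$, then read off $T_i^2$ from $T_P\cdot T_i=d_i$. The paper simply asserts the equality of intersection numbers on $S$ and on $H$ without the local-algebra justification you supply, so your version is, if anything, slightly more detailed at the one place where something needs checking.
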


\begin{proof}
The curve $T_P$ is cut out on $S$ by a hyperplane
$H\subset\mathbb{P}^3$. Then $H\cong\mathbb{P}^2$. Hence, for
every $T_i$ and $T_j$ such that $T_i\ne T_j$, we have $(T_i\cdot
T_j)_S=(T_i\cdot T_j)_H=d_id_j$. In particular, we have
$$
d_1=T_P\cdot T_1=T_1^2+\sum_{i=2}^nT_i\cdot T_1=T_1^2+\sum_{i=2}^n
d_id_1=T_1^2+(d-d_1)d_1,
$$
which gives $T_1\cdot T_1=-d_1(d-d_1-1)$. Similarly, we see that 
$T_i\cdot T_i=-d_i(d-d_i-1)$ for every curve $T_i$.
\end{proof}

Let $D$ be any effective $\mathbb{Q}$-divisor on $S$ such that $D\sim_{\mathbb{Q}} H_S$.
Write
$$
D=\sum_{i=1}^{n}a_iT_i+\Delta,
$$
where each $a_i$ is a non-negative rational number,
and $\Delta$ is an effective $\mathbb{Q}$-divisor on $S$ whose support does not contain
the curves $T_1,\ldots,T_n$.
To prove Proposition~\ref{proposition:technical}, it is enough to show that the log pair $(S,\lambda D)$ is log canonical at $P$
provided that at least one number among $a_1,\ldots,a_n$ vanishes.

Without loss of generality, we may assume that $a_n=0$.
Suppose that the log pair $(S,\lambda D)$ is not log canonical at $P$.
Let us seek for a contradiction.

\begin{lemma}
\label{lemma:proof-Pukhlikov-refined}
Suppose that $n\geqslant 2$. Then
$$
\sum_{i=1}^{k}a_id_id_n\leqslant d_n-t_n\mathrm{mult}_{P}(\Delta).
$$
In particular, $\sum_{i=1}^{k}a_id_i\leqslant 1$ and each $a_i$ does not exceed $\frac{1}{d_i}$.
\end{lemma}

\begin{proof}
One has
$$
d_n=T_n\cdot
D=T_n\cdot\Bigg(\sum_{i=1}^{n}a_iT_i+\Delta\Bigg)=\sum_{i=1}^{n}a_id_id_n+T_n\cdot\Delta\geqslant\sum_{i=1}^{n}a_id_id_n+t_n\mathrm{mult}_{P}(\Delta),
$$
which implies the required inequality.
\end{proof}

Put $m_0=\mathrm{mult}_{P}(D)$.

\begin{lemma}
\label{lemma:proof-Tn-smooth} Suppose that $P\in T_n$. Then $d_n>\frac{d-1}{2}$. If $n\geqslant 2$, then $T_n$ is smooth at $P$.
\end{lemma}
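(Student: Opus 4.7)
The plan is to deduce both assertions from two elementary inequalities: a Skoda-type multiplicity bound at $P$ and a Bezout-type intersection bound for $T_n$.

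First, since the running assumption in the proof of Proposition~\ref{proposition:technical} is that $(S,\lambda D)$ is \emph{not} log canonical at $P$, Lemma~\ref{lemma:Skoda} gives $\lambda m_0>1$, i.e.\ $m_0>1/\lambda=\frac{d(d-2)}{2d-3}$. Next, because $T_n$ is not contained in $\mathrm{Supp}(D)$, the intersection $D\cdot T_n$ is defined as an effective cycle, and at the point $P$ one has the local inequality
$$
\mathrm{mult}_P(D\cdot T_n)\geqslant \mathrm{mult}_P(D)\cdot\mathrm{mult}_P(T_n)=m_0 t_n.
$$
Globally, $D\sim_{\mathbb{Q}}H_S$ and $H_S\cdot T_n=\deg(T_n)=d_n$, so $m_0 t_n\leqslant D\cdot T_n=d_n$.

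For the first assertion, the hypothesis $P\in T_n$ gives $t_n\geqslant 1$, and the two displayed inequalities combine to $d_n\geqslant m_0>\frac{d(d-2)}{2d-3}$. A routine check shows $\frac{d(d-2)}{2d-3}\geqslant\frac{d-1}{2}$ for every $d\geqslant 3$ (cross-multiplying reduces this to $d\geqslant 3$, with equality only at $d=3$), so $d_n>\frac{d-1}{2}$.

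For the smoothness assertion with $n\geqslant 2$, assume for contradiction that $T_n$ is singular at $P$, i.e.\ $t_n\geqslant 2$. The same chain yields $d_n\geqslant 2m_0>\frac{2d(d-2)}{2d-3}$. On the other hand, $n\geqslant 2$ forces $d_n\leqslant d-1$, since $\sum_{i=1}^n d_i=d$ and each $d_i\geqslant 1$. It suffices to verify $\frac{2d(d-2)}{2d-3}\geqslant d-1$ for all $d\geqslant 3$; cross-multiplying, this reduces to $2d(d-2)\geqslant (d-1)(2d-3)$, i.e.\ $d\geqslant 3$ (again with equality only at $d=3$). This contradicts $d_n\leqslant d-1$, proving that $t_n=1$, i.e.\ $T_n$ is smooth at $P$.

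There is no real obstacle here; the argument is purely numerical once one packages Lemma~\ref{lemma:Skoda} with the intersection bound on $T_n$. The only mildly delicate point is checking the two inequalities $\frac{1}{\lambda}\geqslant\frac{d-1}{2}$ and $\frac{2}{\lambda}\geqslant d-1$, but both follow by a one-line cross-multiplication, and in each case the boundary case $d=3$ still yields a strict contradiction because the multiplicity bound $m_0>1/\lambda$ is strict.
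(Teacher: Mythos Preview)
Your proof is correct and follows essentially the same approach as the paper's: combine the Skoda bound $m_0>1/\lambda$ with the intersection inequality $d_n\geqslant t_n m_0$, together with $d_n\leqslant d-1$ when $n\geqslant 2$. The only cosmetic difference is that you verify $1/\lambda\geqslant (d-1)/2$ by direct cross-multiplication, whereas the paper has recorded this as Lemma~\ref{lemma:proof-inequalities}(i).
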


\begin{proof}
Since $T_n$ is not contained in the support of the divisor $D$, we have
$$
d\geqslant d_n=T_n\cdot D\geqslant t_nm_0,
$$
which implies that
$m_0\leqslant\frac{d_n}{t_n}$. Since $m_0>\frac{1}{\lambda}$ by
Lemma~\ref{lemma:Skoda}, we have $d_n>\frac{d-1}{2}$ by
Lemma~\ref{lemma:proof-inequalities}(i). Moreover, if $n\geqslant
2$ and $t_n\geqslant 2$, then it follows from
Lemma~\ref{lemma:Skoda} that
$$
\frac{1}{\lambda}<m_0\leqslant\frac{d_n}{t_n}\leqslant\frac{d-1}{t_n}\leqslant\frac{d-1}{2},
$$
which is impossible by Lemma~\ref{lemma:proof-inequalities}(i).
\end{proof}

Now we are going to use Theorem~\ref{theorem:Trento} to prove

\begin{lemma}
\label{lemma:proof-two-lines}
Suppose that $n\geqslant 3$ and $P$ is contained in at least two irreducible components of the curve $T_P$ that are different from $T_n$ and that are both smooth at $P$.
Then they are tangent to each other at $P$.
\end{lemma}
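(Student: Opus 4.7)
The plan is to argue by contradiction: suppose the two chosen components, call them $T_i$ and $T_j$, both smooth at $P$ and both different from $T_n$, intersect transversally at $P$. Write
$$
D = a_i T_i + a_j T_j + \Omega,
$$
where $a_i,a_j\ge 0$ and $\Omega$ is an effective $\mathbb{Q}$-divisor whose support contains neither $T_i$, $T_j$, nor $T_n$. Lemma~\ref{lemma:proof-Pukhlikov-refined}, applied with $k=2$ to the pair $\{T_i,T_j\}$, gives $a_id_i+a_jd_j\le 1$, so in particular $\lambda a_i<1$ and $\lambda a_j<1$.

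The core step is to apply Theorem~\ref{theorem:Trento} to the log pair $(S,\lambda a_iT_i+\lambda a_jT_j+\lambda\Omega)=(S,\lambda D)$ at $P$, with $Z_1=T_i$ and $Z_2=T_j$. This produces, up to relabeling,
$$
\lambda\,\mathrm{mult}_P(\Omega\cdot T_i)>2(1-\lambda a_j).
$$
Using Lemma~\ref{lemma:proof-T-intersections} to compute
$$
\Omega\cdot T_i=d_i\bigl(1+a_i(d-d_i-1)-a_jd_j\bigr),
$$
and bounding $\mathrm{mult}_P(\Omega\cdot T_i)\le \Omega\cdot T_i$ (valid since $T_i\not\subset\mathrm{Supp}(\Omega)$), the displayed inequality combined with $a_id_i\le 1-a_jd_j$ (legitimate because $d-d_i-1\ge 0$) simplifies, after cancellation, to
$$
\lambda(d-1)(1-a_jd_j)+2\lambda a_j>2.
$$
The left-hand side equals $\lambda(d-1)+\lambda a_j\bigl(2-(d-1)d_j\bigr)$, which is non-increasing in $a_j$ (since $(d-1)d_j\ge 3$ for $d\ge 4$ and $d_j\ge 1$), so it is bounded above by its value $\lambda(d-1)$ at $a_j=0$; but $\lambda(d-1)<2$ by Lemma~\ref{lemma:proof-inequalities}(i), giving the desired contradiction.

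The main obstacle will be verifying the multiplicity hypothesis $\mathrm{mult}_P(\lambda\Omega)\le 1$ that Theorem~\ref{theorem:Trento} requires, since only $\lambda m_0<2$ is available a priori. When $P\in T_n$, Lemma~\ref{lemma:proof-Tn-smooth} ensures $T_n$ is smooth at $P$, so Lemma~\ref{lemma:proof-T-intersections} yields
$$
\mathrm{mult}_P(\Omega)\le \Omega\cdot T_n=d_n\bigl(1-a_id_i-a_jd_j\bigr);
$$
this suffices in most parameter ranges, and when $a_id_i+a_jd_j$ is too small for this bound to work, Lemma~\ref{lemma:Skoda} combined with $m_0\le d_n$ and $d_i,d_j\le d-d_n$ already forces $d_i+d_j>d$, contradicting $n\ge 3$ (together with $d_i+d_j+d_n\le d$). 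The case $P\notin T_n$ can be handled by a preliminary perturbation as in Remark~\ref{remark:convexity}, or by replacing Theorem~\ref{theorem:Trento} with Theorem~\ref{theorem:Trento-2}, whose weaker strict inequalities still yield the same contradiction.
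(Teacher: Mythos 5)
Your main chain of inequalities is sound and in fact shorter than the paper's: a single application of Theorem~\ref{theorem:Trento}, together with $a_id_i+a_jd_j\leqslant 1$ and $d-d_i-1\geqslant 0$, does collapse to $\lambda(d-1)(1-a_jd_j)+2\lambda a_j>2$, whose left-hand side is at most $\lambda(d-1)\leqslant 2$ for every $d\geqslant 3$ because $(d-1)d_j\geqslant 2$ (note that $\lambda(d-1)$ equals $2$ when $d=3$, so your citation of a strict inequality from Lemma~\ref{lemma:proof-inequalities}(i) is inaccurate there, but the strictness of the Trento inequality still closes the argument). By contrast, the paper combines one Trento alternative with a further application of Theorem~\ref{theorem:adjunction-2} to the other curve and then eliminates $d=3$ and $d=4$ by explicit case analysis, so your endgame is genuinely more economical.

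The genuine gap is in your verification of the hypothesis $\mathrm{mult}_P(\lambda\Omega)\leqslant 1$ of Theorem~\ref{theorem:Trento}, which you rightly flag as the main obstacle but do not actually establish. When $P\in T_n$, your bound gives only $k_0\leqslant d_n(1-a_id_i-a_jd_j)\leqslant d_n\leqslant d-2$, and $d-2>\frac{1}{\lambda}=\frac{d(d-2)}{2d-3}$ for every $d\geqslant 4$, so this does not suffice outside $d=3$; the proposed rescue via Lemma~\ref{lemma:Skoda} and $m_0\leqslant d_n$ yields only $d_n>\frac{1}{\lambda}$, hence $d_i+d_j\leqslant d-d_n<d-\frac{1}{\lambda}$, which is nowhere near the claimed $d_i+d_j>d$. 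For $P\notin T_n$ the suggested remedies do not apply either: Theorem~\ref{theorem:Trento-2} has the \emph{stronger} multiplicity hypothesis $\mathrm{mult}_P(D)<1$, and a perturbation as in Remark~\ref{remark:convexity} changes the divisor but gives no control on $\mathrm{mult}_P(\Omega)$. The correct and simple fix is the one the paper uses, and it needs neither $T_n$ nor any case split: since $T_i$ and $T_j$ are smooth at $P$ and not contained in $\mathrm{Supp}(\Omega)$, add the two inequalities $k_0\leqslant\Omega\cdot T_i$ and $k_0\leqslant\Omega\cdot T_j$ computed from Lemma~\ref{lemma:proof-T-intersections} to get $2k_0\leqslant d_i+d_j+(a_id_i+a_jd_j)(d-d_i-d_j-1)\leqslant d-1$, whence $k_0\leqslant\frac{d-1}{2}\leqslant\frac{1}{\lambda}$ by Lemma~\ref{lemma:proof-inequalities}(i). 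With that substitution your proof is complete.
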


\begin{proof}
Without loss of generality, we may assume that $P\in T_1\cap T_2$ and $t_1=t_2=1$.
Suppose that $T_1$ and $T_2$ are not tangent to each other at $P$.
Put $\Omega=\sum_{i=3}^{n}a_iT_i+\Delta$, so that $D=a_1T_1+a_2T_2+\Omega$.
Then $a_1d_1+a_2d_2\leqslant 1$ by Lemma~\ref{lemma:proof-Pukhlikov-refined}.

Put $k_0=\mathrm{mult}(\Omega)$. Then
$$
d_1+a_1d_1\big(d-d_1-1\big)-a_2d_1d_2=\Omega\cdot T_1\geqslant k_0%
$$
by Lemma~\ref{lemma:proof-T-intersections}. Similarly, we have
$$
d_2-a_1d_1d_2+a_2d_2\big(d-d_2-1\big)=\Omega\cdot T_2\geqslant k_0.
$$
Adding these two inequalities together and using $a_1d_1+a_2d_2\leqslant 1$, we get
$$
2k_0\leqslant d_1+d_2+\big(a_1d_1+a_2d_2\big)\big(d-d_1-d_2-1\big)\leqslant d_1+d_2+\big(d-d_1-d_2-1\big)=d-1.
$$
Thus, $k_0\leqslant\frac{1}{\lambda}$ by Lemma~\ref{lemma:proof-inequalities}(i).

Since $\lambda k_0\leqslant 1$, we can apply Theorem~\ref{theorem:Trento} to
the log pair $(S, \lambda a_1T_1+\lambda a_2T_2+\lambda\Omega)$ at the point $P$.
This gives either $\lambda\Omega\cdot T_1>2(1-\lambda a_2)$ or $\lambda\Omega\cdot T_2>2(1-\lambda a_1)$.
Without loss of generality, we may assume that $\lambda\Omega\cdot T_2>2(1-\lambda a_1)$. Then
\begin{equation}
\label{equation:lemma:proof-two-lines-1}
d_2+a_2d_2\big(d-d_2-1\big)-a_1d_1d_2=\Omega\cdot T_2>\frac{2}{\lambda}-2a_1.%
\end{equation}
Applying Theorem~\ref{theorem:adjunction-2} to the log pair $(S, \lambda a_1T_1+\lambda b_1T_2+\lambda\Omega)$
and the curve $T_1$ at the point $P$, we get
$$
d_1+a_1d_1\big(d-d_1-1\big)=\Big(\lambda a_2T_2+\lambda\Omega\Big)\cdot T_1>\frac{1}{\lambda}.%
$$
Adding this inequality to \eqref{equation:lemma:proof-two-lines-1}, we get
$$
d+1\geqslant d-1+2a_1\geqslant d_1+d_2+\big(a_1d_1+a_2d_2\big)\big(d-d_1-d_2-1\big)+2a_1>\frac{3}{\lambda},%
$$
because $a_1d_1+a_2d_2\leqslant 1$. Thus, it follows from Lemma~\ref{lemma:proof-inequalities}(ii) that either $d=3$ or $d=4$.

If $d=3$, then $n=3$ and $d_1=d_2=d_3=\lambda=1$, which implies that $a_1+a_2>1$ by \eqref{equation:lemma:proof-two-lines-1}.
On the other hand, we know that $a_1d_1+a_2d_2\leqslant 1$, so that $a_1+a_2\leqslant 1$.
This shows that $d\ne 3$.

We see that $d=4$. Then $\lambda=\frac{5}{8}$ and $d_1+d_2\leqslant 3$.
If $d_1=d_1=1$, then \eqref{equation:lemma:proof-two-lines-1} gives $2a_2+a_1>\frac{11}{5}$.
If $d_1=1$ and $d_2=2$, then \eqref{equation:lemma:proof-two-lines-1} gives $a_2>\frac{3}{5}$.
If $d_1=2$ and $d_2=1$, then \eqref{equation:lemma:proof-two-lines-1} gives $a_2>\frac{11}{5}$.
All these three inequalities are inconsistent, because $a_1d_1+a_2d_2\leqslant 1$.
The obtained contradiction completes the proof of the lemma.
\end{proof}

Note that every line contained in the surfaces $S$ that passes
through $P$ must be an irreducible component of the curve $T_P$.
Moreover, the curve $T_n$ cannot be a line by Lemma~\ref{lemma:proof-Tn-smooth}.
Thus, Lemma~\ref{lemma:proof-two-lines} implies that there exists at most one line in $S$ that passes through $P$.
In particular, we see that $n<d$.

\begin{lemma}
\label{lemma:proof-two-smooth-curves} Suppose that $n\geqslant 3$
and $P$ is contained in at least two irreducible components of the
curve $T_P$ that are different from $T_n$. Then these curves are smooth at $P$.
\end{lemma}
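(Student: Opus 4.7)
\medskip

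The plan is to argue by contradiction: suppose one of the two components through $P$ different from $T_n$, say $T_1$, has $t_1:=\mathrm{mult}_P(T_1)\geqslant 2$, and let $T_2$ be the other such component. Write $D=aT_1+bT_2+\Delta$ where $\Delta$ is an effective $\mathbb{Q}$-divisor whose support contains neither $T_1$ nor $T_2$, and set $k_0:=\mathrm{mult}_P(\Delta)$, so $m_0=at_1+bt_2+k_0$. Three basic inequalities are collected: from Lemma~\ref{lemma:proof-Pukhlikov-refined} one has $ad_1+bd_2\leqslant 1$ (strengthened to $ad_1+bd_2\leqslant 1-k_0/d_n$ if $P\in T_n$); from Lemma~\ref{lemma:Skoda} one has $m_0>1/\lambda$; and computing $\Delta\cdot T_i$ via Lemma~\ref{lemma:proof-T-intersections} and using $\mathrm{mult}_P(\Delta\cdot T_i)\geqslant t_ik_0$, then summing over $i=1,2$, gives $(t_1+t_2)k_0\leqslant d_1+d_2+(ad_1+bd_2)(d-d_1-d_2-1)\leqslant d-1$, so $3k_0\leqslant d-1$. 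Because $T_1$ is irreducible with $t_1\geqslant 2$ one has $d_1\geqslant 3$, and combined with $n\geqslant 3$ this forces $d\geqslant 5$.

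The next step is to split on whether $P\in T_n$. If $P\in T_n$, then Lemma~\ref{lemma:proof-Tn-smooth} gives $t_n=1$ and $d_n\leqslant d-2$, while the sharpened inequality forces $m_0\leqslant 1+k_0(1-1/d_n)$. Combining with $m_0>1/\lambda=d(d-2)/(2d-3)$ and $k_0\leqslant(d-1)/3$ reduces after a short calculation to $(d-6)d_n\geqslant 2d-3$, which fails for every admissible $(d,d_n)$ with $d\geqslant 4$. If $P\notin T_n$, then $at_1+bt_2\leqslant ad_1+bd_2\leqslant 1$, hence $m_0\leqslant 1+k_0$ and $k_0>1/\lambda-1=(d-1)(d-3)/(2d-3)$, which conflicts with $k_0\leqslant(d-1)/3$ whenever $d\geqslant 6$ by the identity $(2d-3)-3(d-3)=6-d$.

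The remaining case is $d=5$ with $P\notin T_n$. Here the degree count $d_1\geqslant 3$, $n\geqslant 3$, $\sum d_i=5$ forces $d_1=3$, $n=3$, $d_2=d_3=1$, so that $T_1$ is a plane cubic with a double point at $P$, $T_2$ is a line through $P$, and $T_3=T_n$ is a line avoiding $P$. I would apply Theorem~\ref{theorem:adjunction} to the smooth curve $T_2$ with $\mu=\lambda b\leqslant 1$ to obtain
$$
\lambda a\cdot\mathrm{mult}_P(T_1\cdot T_2)+\lambda\cdot\mathrm{mult}_P(\Delta\cdot T_2)>1.
$$
Bounding $\mathrm{mult}_P(T_1\cdot T_2)\leqslant d_1d_2=3$ and $\mathrm{mult}_P(\Delta\cdot T_2)\leqslant\Delta\cdot T_2=1+3b-3a$ collapses this to $\lambda(1+3b)>1$, i.e.\ $b>8/21$. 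To close out I would blow up $P$, locate the unique non-log-canonical point $Q$ on the exceptional curve $E$ (Lemma~\ref{lemma:log-pull-back}), and apply Theorem~\ref{theorem:adjunction-2} to $E$ at $Q$; the detailed local geometry of the nodal/cuspidal cubic $T_1$ (in particular, the distribution of $\tilde T_1\cdot E$ among the tangent directions) then yields a matching lower bound on $a$ which, combined with $b>8/21$, violates $3a+b\leqslant 1$.

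The main obstacle is precisely this last step. The general degree-plus-multiplicity bookkeeping handles $d\geqslant 6$ and the case $P\in T_n$ uniformly, but at $d=5$ with $P\notin T_n$ the inequalities are genuinely tight and one must descend to the blow-up and use the specific plane-curve geometry of the cubic $T_1$ together with $T_2$ to break the tie.
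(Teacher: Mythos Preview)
Your reduction to $d=5$ with $P\notin T_n$, $n=3$, $d_1=3$, $t_1=2$, $d_2=d_3=1$ is correct, and your adjunction on the line $T_2$ yielding $b>\tfrac{8}{21}$ matches exactly what the paper obtains (with the roles of $T_1,T_2$ swapped). The split on whether $P\in T_n$ is unnecessary --- the paper simply uses $m_0\leqslant k_0+ad_1+bd_2\leqslant 1+\tfrac{d-1}{t_1+t_2}$ together with Lemma~\ref{lemma:proof-inequalities}(iii),(iv) to reach the same point --- but your version is not wrong.

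The genuine gap is the last step. You propose to blow up $P$ and analyse the tangent directions of the nodal/cuspidal cubic, but this is neither carried out nor needed. The inequalities you already have close the case directly. In your notation ($T_1$ the cubic, $T_2$ the line), Lemma~\ref{lemma:proof-T-intersections} gives
\[
\Delta\cdot T_1=3+3a-3b\geqslant t_1k_0=2k_0,
\]
so $k_0\leqslant\tfrac{3+3a-3b}{2}$. Hence
\[
m_0=2a+b+k_0\leqslant\frac{3+7a-b}{2}.
\]
Now $3a+b\leqslant 1$ gives $a\leqslant\tfrac{1-b}{3}$, so
\[
m_0\leqslant\frac{3+\tfrac{7(1-b)}{3}-b}{2}=\frac{8-5b}{3}.
\]
With $b>\tfrac{8}{21}$ this yields $m_0<\tfrac{128}{63}<\tfrac{135}{63}=\tfrac{15}{7}=\tfrac{1}{\lambda}$, contradicting Lemma~\ref{lemma:Skoda}. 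This is precisely the paper's computation (up to relabelling), and it finishes the proof without any blow-up. The key observation you missed is that the \emph{singular} component $T_1$ contributes the factor $t_1=2$ in the bound $\Delta\cdot T_1\geqslant 2k_0$, which is what makes the arithmetic close.
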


\begin{proof}
Without loss of generality, we may assume that $P\in T_1\cap T_2$ and $t_1\leqslant t_2$.
We have to show that $t_1=t_2=1$.
We may assume that $d\geqslant 5$, because the required assertion is obvious in the cases $d=3$ and $d=4$.

Put $\Omega=\sum_{i=3}^{n}a_iT_i+\Delta$ and put $k_0=\mathrm{mult}_{P}(\Omega)$. Then $m_0=k_0+a_1t_1+a_2t_2$.
Moreover, we have $a_1d_1+a_2d_2\leqslant 1$ by Lemma~\ref{lemma:proof-Pukhlikov-refined}.
On the other hand, it follows from Lemma~\ref{lemma:proof-T-intersections} that
$$
d-1\geqslant d_1+d_2+\big(a_1d_1+a_2d_2\big)\big(d-d_1-d_2-1\big)=\Omega\cdot\Big(T_1+T_2\Big)\geqslant k_0\big(t_1+t_2\big),%
$$
because $a_1d_1+a_2d_2\leqslant 1$.
Thus, we have $k_0\leqslant\frac{d-1}{t_1+t_2}$.
Hence, if $t_1+t_2\geqslant 4$, then
$$
m_0=k_0+a_1t_1+a_2t_2\leqslant k_0+a_1d_1+a_2d_2\leqslant\frac{d-1}{t_1+t_2}+a_1d_1+a_2d_2\leqslant\frac{d-1}{t_1+t_2}+1\leqslant\frac{d+3}{4}
$$
because $a_1d_1+a_2d_2\leqslant 1$.
Since $m_0>\frac{1}{\lambda}$ by Lemma~\ref{lemma:Skoda}, the inequality $m_0\leqslant\frac{d+3}{4}$ gives $\lambda>\frac{d+3}{4}$,
which is impossible by Lemma~\ref{lemma:proof-inequalities}(iii).
Thus, $t_1+t_2\leqslant 3$. Since $t_1\leqslant t_2$, we have $t_1=1$ and $t_2\leqslant 2$.

To complete the proof of the lemma, we have to prove that $t_2=1$.
Suppose $t_2\ne 1$. Then $t_2=2$, since $t_1+t_2\leqslant 3$.
Since $k_0\leqslant\frac{d-1}{t_1+t_2}=\frac{d-1}{3}$ and $a_1d_1+a_2d_2\leqslant 1$, we have
$$
m_0=k_0+a_1t_1+a_2t_2\leqslant k_0+a_1d_1+a_2d_2\leqslant\frac{d-1}{32}+a_1d_1+a_2d_2\leqslant\frac{d-1}{t_1+t_2}+1=\frac{d+2}{3}.%
$$
On the other hand, $m_0>\frac{1}{\lambda}$ by Lemma~\ref{lemma:Skoda}, so that $\lambda>\frac{3}{d+2}$.
Then $d=5$ by Lemma~\ref{lemma:proof-inequalities}(iv).

Since $d=5$, $t_1=1$ and $t_2=2$, we have $n=3$, $d_1=1$, $d_2=3$ and $d_3=1$.
Applying Theorem~\ref{theorem:adjunction-2} to the log pair $(S, \lambda a_1T_1+\lambda a_2T_2+\lambda\Omega)$, we get
$$
1+3a_1=d_1+a_2d_1\big(d-d_1-1\big)=\Big(\lambda a_2T_2+\lambda\Omega\Big)\cdot T_1>\frac{1}{\lambda}=\frac{15}{7},%
$$
which gives $a_1>\frac{8}{21}$.
On the other hand, $a_1+3a_2\leqslant 1$, because $a_1d_1+a_2d_2\leqslant 1$.
Since $m_0>\frac{1}{\lambda}=\frac{15}{7}$ by Lemma~\ref{lemma:Skoda}, we see that
\begin{multline*}
\frac{15}{7}-\frac{1}{9}=\frac{128}{63}>\frac{8-5a_1}{3}=\frac{3-a_1+\frac{7(1-a_1)}{3}}{2}=\frac{3-a_1+7a_2}{2}=\frac{3-3a_1+3a_2}{2}+a_1+2a_2=\\
=\frac{\Delta\cdot T_2}{2}+a_1+2a_2\geqslant\frac{\mathrm{mult}_{P}\Big(\Delta\cdot T_2\Big)}{2}+a_1+2a_2\geqslant\frac{t_2k_0}{2}+a_1+2a_2=k_0+a_1+2a_2=m_0>\frac{15}{7},%
\end{multline*}
which is absurd.
\end{proof}

Now we are ready to prove

\begin{lemma}
\label{lemma:proof-m0-small} One has $m_0\leqslant\frac{d+1}{2}$.
\end{lemma}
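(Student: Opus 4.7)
I argue by contradiction, assuming $m_0 > \frac{d+1}{2}$. The case $n=1$ is immediate: here $T_n = T_P$ is irreducible and singular at $P$, so $t_n\geq 2$, and intersecting with $T_n\not\subset\mathrm{Supp}(D)$ yields $d = D\cdot T_n\geq t_n m_0 \geq 2 m_0$, contradicting $m_0 > \frac{d+1}{2}$. So the interesting case is $n\geq 2$; by Lemma~\ref{lemma:proof-Tn-smooth}, $T_n$ is smooth at $P$ with $t_n = 1$ and $d_n > \frac{d-1}{2}$. Let $I = \{j \neq n : P\in T_j\}$. Since $T_P$ is singular at $P$ while $t_n = 1$, $I$ is nonempty.

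Decompose $D = \sum_{j\in I} a_j T_j + \Delta$ with $\Delta$ containing none of the $T_j$ for $j\in I\cup\{n\}$, and set $s = \sum_{j\in I} a_j d_j$, $k_0 = \mathrm{mult}_P(\Delta)$, $t_I = \sum_{j\in I} t_j$, $d_I = \sum_{j\in I} d_j$. By Lemma~\ref{lemma:proof-Pukhlikov-refined} applied to $T_n$, I have $s\leq 1$. Lemma~\ref{lemma:proof-T-intersections} combined with $D\cdot T_n = d_n$ and $D\cdot T_j = d_j$ yields the intersection identities $\Delta\cdot T_n = d_n(1-s)$ and $\Delta\cdot T_j = d_j(1-s+a_j(d-1))$ for each $j\in I$. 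Since $T_n$ and each $T_j$ are not components of $\Delta$, I deduce
\[
k_0 \leq d_n(1-s), \qquad t_I k_0 \leq d_I(1-s) + (d-1)s,
\]
the second by summing $t_j k_0 \leq \Delta\cdot T_j$ over $j\in I$. Together with $m_0 = \sum_{j\in I} a_j t_j + k_0 \leq s + k_0$ (using $t_j\leq d_j$), the goal is to maximize $s+k_0$ over $s\in[0,1]$ under these two ceilings.

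The first ceiling on $k_0$ is decreasing in $s$ and the second is increasing, so the maximum of $s+k_0$ is attained at their crossing point. After clearing denominators, the inequality $m_0 \leq \frac{d+1}{2}$ reduces to the polynomial inequality
\[
(d-1)\bigl[d_n(t_I - 2) + (d+1) - d_I\bigr] \geq 0,
\]
which I check in two subcases. For $t_I\geq 2$ both bracket terms are nonnegative because $d_I\leq d - d_n\leq d - 2$. For $t_I = 1$, necessarily $|I|=1$ with $t_j=1$, and the bracket becomes $(d+1) - (d_n + d_j)\geq 1$ since $d_n+d_j\leq d$. The degenerate scenarios where the crossing falls outside $[0,1]$ would force $t_I d_n < d_I$, hence $d_n < d/3$, contradicting $d_n > \frac{d-1}{2}$. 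The main obstacle is the bookkeeping of the simultaneous optimization: pairing each intersection identity with the correct local multiplicity bound at $P$, and observing that the final polynomial inequality is a direct consequence of the ``degree budget'' $d_I + d_n \leq d$ coming from the decomposition of the hyperplane section $T_P$.
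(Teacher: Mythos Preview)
Your argument has a genuine gap: you invoke Lemma~\ref{lemma:proof-Tn-smooth} to conclude $t_n=1$, but that lemma has the hypothesis $P\in T_n$. Nothing in the setup forces $P\in T_n$; the case $t_n=0$ is real (and the paper treats it separately). When $t_n=0$, your first ceiling $k_0\le d_n(1-s)$ collapses to the vacuous $0\le d_n(1-s)$, and you also lose the bound $d_n>\frac{d-1}{2}$, so the crossing-point optimisation as written does not go through.

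The gap is easily patched with your own machinery. If $t_n=0$ then $t_I=\mathrm{mult}_P(T_P)\ge 2$, and Lemma~\ref{lemma:proof-Pukhlikov-refined} still gives $s\le 1$. Using only your second ceiling $t_Ik_0\le d_I(1-s)+(d-1)s$, one has
\[
m_0\le s+k_0\le s+\frac{d_I(1-s)+(d-1)s}{t_I}\le s+\frac{d_I+(d-1-d_I)s}{2},
\]
which is increasing in $s$ (since $d_I\le d-d_n\le d-1$), hence maximised at $s=1$ with value $1+\tfrac{d-1}{2}=\tfrac{d+1}{2}$. This closes the case. A smaller point: your degenerate-case claim ``$d_n<d/3$'' needs $t_I\ge 2$; for $t_I=1$ you only get $d_n\le d_I\le d-d_n$, i.e.\ $d_n\le d/2$. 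But since the optimum in that degenerate regime is $m_0\le d_n\le d/2<\tfrac{d+1}{2}$, the conclusion still holds.

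Once patched, your route is genuinely different from the paper's. The paper pulls out one or two components of $T_P$ and runs through the subcases $t_n=1$, then $t_n=0$ with $t_1\ge 2$, then $t_n=0$ with $t_1=1$ (where it invokes Lemmas~\ref{lemma:proof-two-lines} and \ref{lemma:proof-two-smooth-curves}, which rest on Theorem~\ref{theorem:Trento}). Your uniform optimisation over all components through $P$ bypasses those auxiliary lemmas entirely and gives a cleaner, more self-contained argument.
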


\begin{proof}
Suppose that $m_0>\frac{d+1}{2}$. Let us seek for a contradiction.
If $n=1$, then
$$
d=T_n\cdot D\geqslant 2m_0,
$$
which implies that $m_0\leqslant\frac{d}{2}$.
Thus, have $n\geqslant 2$.
Then $a_1\leqslant\frac{1}{d_1}$ by Lemma~\ref{lemma:proof-Pukhlikov-refined}.
Moreover, either $t_n=0$ or $t_n=1$ by Lemma~\ref{lemma:proof-Tn-smooth}.
Hence, there is an irreducible component of $T_P$ that passes through $P$ and is different from $T_n$, because $T_P$ is singular at $P$.
Without loss of generality, we may assume that $t_1\geqslant 1$.

Put $\Upsilon=\sum_{i=2}^{n}a_iT_i+\Delta$, so that $D=a_1T_1+\Upsilon$.
Put $n_0=\mathrm{mult}_{P}(\Upsilon)$, so that $m_0=n_0+a_1t_1$.
Then $t_nn_0\leqslant d_n-a_1d_1d_n$ by Lemma~\ref{lemma:proof-Pukhlikov-refined},
and
\begin{equation}
\label{equation:lemma:proof-m0-small-1}
d_1+a_1d_1(d-d_1-1)=\Upsilon\cdot T_1\geqslant t_1n_0
\end{equation}
by Lemma~\ref{lemma:proof-T-intersections}.
Adding these two inequalities, we get $(t_1+t_n)n_0\leqslant d_1+d_n+a_1d_1(d-d_1-d_n-1)$.
Hence, if $n\geqslant 3$ and $t_n=1$, then
$$
2n_0\leqslant\big(t_1+t_n\big)n_0\leqslant d_1+d_n+a_1d_1\big(d-d_1-d_n-1\big)\leqslant d-1\leqslant d-a_1d_1,%
$$
because $a_1\leqslant\frac{1}{d_1}$. Similarly, if $n=2$ and $t_n=1$, then
$$
2n_0\leqslant\big(t_1+t_n\big)n_0\leqslant d_1+d_2+a_1d_1\big(d-d_1-d_2-1\big)=d_1+d_2-a_1d_1=d-a_1d_1.%
$$
Thus, if $t_n=1$, then $n_0\leqslant\frac{d-a_1d_1}{2}$, which is impossible.
Indeed, the inequality $n_0\leqslant\frac{d-a_1d_1}{2}$ gives
$$
\frac{d+1}{2}<m_0=n_0+a_1t_1\leqslant n_0+a_1d_1\leqslant\frac{d-a_1d_1}{2}+a_1d_1=\frac{d+a_1d_1}{2}\leqslant\frac{d+1}{2},
$$
because $a_1\leqslant\frac{1}{d_1}$. This shows that $t_n=0$.

If $t_1\geqslant 2$, then it follows from \eqref{equation:lemma:proof-m0-small-1} that
$$
\frac{d+1}{2}<m_0\leqslant n_0+a_1d_1\leqslant\frac{d_1+a_1d_1(d-d_1-1)}{2}+a_1d_1=\frac{d_1+a_1d_1(d-d_1+1)}{2}\leqslant\frac{d+1}{2},
$$
because $a_1\leqslant\frac{1}{d_1}$. This shows that $t_1=1$.

Since $t_1=1$ and $t_n=0$, there exists an irreducible component of the curve $T_P$
that passes through $P$  and is different from $T_1$ and $T_n$.
In particular, we have $n\geqslant 3$.
Without loss of generality, we may assume $P\in T_2$.
Then $T_2$ is smooth at $P$ by Lemma~\ref{lemma:proof-two-smooth-curves}.

Put $\Omega=\sum_{i=3}^{n}a_iT_i+\Delta$ and put $k_0=\mathrm{mult}_{P}(\Omega)$.
Then $a_1d_1+a_2d_2\leqslant 1$ by Lemma~\ref{lemma:proof-Pukhlikov-refined}.
Thus, it follows from Lemma~\ref{lemma:proof-T-intersections} that
$$
2k_0\leqslant\Omega\cdot\Big(T_1+T_2\Big)=d_1+d_2+\big(a_1d_1+a_2d_2\big)\big(d-d_1-d_2-1\big)\leqslant d-1,%
$$
which implies $k_0\leqslant\frac{d-1}{2}$. Then
$$
\frac{d+1}{2}<m_0=k_0+a_1t_1+a_2t_2\leqslant k_0+a_1d_1+a_2d_2\leqslant\frac{d-1}{2}+a_1d_1+a_2d_2\leqslant\frac{d-1}{2}+1=\frac{d+1}{2},
$$
because $a_1d_1+a_2d_2\leqslant 1$. The obtained contradiction completes the proof of the lemma.
\end{proof}

Let $f_1\colon S_1\to S$ be a blow up of the point $P$, and let
$E_1$ be its exceptional curve. Denote by $D^1$ the proper
transform of the $\mathbb{Q}$-divisor $D$ on the surface $S_1$.
Then
$$
K_{S_1}+\lambda D^1+\big(\lambda m_0-1\big)E_1\sim_{\mathbb{Q}}
f_1^{*}\Big(K_{S}+\lambda D\Big),
$$
which implies that $(S_1, \lambda D^1+(\lambda m_0-1)E_1)$ is not log canonical at some point $P_1\in E_1$.

By Lemma~\ref{lemma:proof-m0-small}, we have $m_0\leqslant\frac{d+1}{2}$.
By Lemma~\ref{lemma:proof-inequalities}(v), we have $\lambda\leqslant\frac{4}{d+1}$.
This gives $\lambda m_0\leqslant 2$.
Thus, the log pair $(S_1, \lambda D^1+(\lambda m_0-1)E_1)$ is log canonical at every point of the curve $E_1$
that is different from $P_1$ by Lemma~\ref{lemma:log-pull-back}.

Put $m_1=\mathrm{mult}_{P_1}(D^1)$. Then Lemma~\ref{lemma:Skoda} gives
\begin{equation}
\label{equation:proof-Skoda-S1-tak}
m_0+m_1>\frac{2}{\lambda}.
\end{equation}

For each curve $T_i$, denote by $T_i^1$ its proper transform on $S_1$. Put $T_P^1=\sum_{i=1}^nT_i^1$.

\begin{lemma}
\label{lemma:proof-P1-Tp} One has $P_1\not\in T_P^1$.
\end{lemma}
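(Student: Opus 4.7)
The plan is to argue by contradiction: suppose $P_1 \in T_i^1$ for some index $i \in \{1,\ldots,n\}$. Then $P \in T_i$, so $t_i \geq 1$, and $P_1$ is a point in $T_i^1 \cap E_1$ corresponding to a tangent direction of $T_i$ at $P$. I would apply Theorem~\ref{theorem:Trento} (Trento) to the pair $(S_1,\lambda D^1+(\lambda m_0-1)E_1)$ at $P_1$, treating $E_1$ and $T_i^1$ as the two smooth transversal curves through $P_1$, and derive a contradiction from the bounds in Lemmas~\ref{lemma:proof-Pukhlikov-refined},~\ref{lemma:proof-T-intersections},~\ref{lemma:proof-Tn-smooth}, and~\ref{lemma:proof-m0-small}.

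First I would verify that the hypotheses of Theorem~\ref{theorem:Trento} are met: $T_i^1$ should be smooth at $P_1$ and meet $E_1$ transversally there. When $n \geq 2$, Lemma~\ref{lemma:proof-Tn-smooth} gives $t_n \leq 1$, and Lemma~\ref{lemma:proof-two-smooth-curves} ensures that any other irreducible component $T_j$ of $T_P$ through $P$ is smooth at $P$ with $t_j = 1$, so $T_i^1 \cap E_1 = \{P_1\}$ transversally. The case $n = 1$ (where $T_1 = T_P$ is singular at $P$ and not in $\mathrm{Supp}(D)$) needs a direct argument using the tangent cone of $T_1$ at $P$; but even there, after choosing $P_1$ to lie on a single smooth analytic branch of $T_1^1$ (which is possible on a blow-up at a plane curve singularity), the same two-curve configuration applies.

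Next I would decompose $D = a T_i + D^{\ast}$ with $T_i \not\subset \mathrm{Supp}(D^{\ast})$ (where $a = 0$ if $T_i = T_n$), so $D^1 = a T_i^1 + (D^{\ast})^1$. By Lemma~\ref{lemma:proof-Pukhlikov-refined}, $a d_i \leq 1$, hence $\lambda a \leq \lambda/d_i < 1$. Also $\lambda m_0 - 1 \leq 1$ by Lemma~\ref{lemma:proof-first-discrepancy}. Writing $m_0^{\ast} = \mathrm{mult}_P(D^{\ast}) = m_0 - a t_i$ and using Lemma~\ref{lemma:proof-T-intersections}, I compute
\[
(D^{\ast})^1 \cdot E_1 = m_0^{\ast}, \qquad
(D^{\ast})^1 \cdot T_i^1 = d_i + a d_i(d-d_i-1) - t_i m_0^{\ast}.
\]
Applying Theorem~\ref{theorem:Trento} to $(S_1,\lambda a T_i^1 + (\lambda m_0 - 1)E_1 + \lambda (D^{\ast})^1)$ with $Z_1 = E_1$, $Z_2 = T_i^1$, $\mu_1 = \lambda m_0 - 1$, $\mu_2 = \lambda a$, yields either
\[
\lambda m_0^{\ast} > 2(1-\lambda a) \qquad \text{or} \qquad \lambda\bigl[d_i + a d_i(d-d_i-1) - t_i m_0^{\ast}\bigr] > 2(2-\lambda m_0).
\]
The first alternative rearranges to $\lambda(m_0 + a(2-t_i)) > 2$, which combined with $m_0 \leq (d+1)/2$ (Lemma~\ref{lemma:proof-m0-small}) and $ad_i \leq 1$ contradicts Lemma~\ref{lemma:proof-inequalities}(v). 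The second alternative simplifies (using $ad_i \leq 1$ and $d_i \leq d-1$ when $n\geq 2$) to an inequality of the form $\lambda(d_i + m_0 t_i) > 4 - $ (small), which, bounded via $d_i + m_0 \leq (3d-1)/2$, contradicts Lemma~\ref{lemma:proof-inequalities}(iii) (or (v) for $d=3,4$).

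The main obstacle is the uniform treatment of small $d$: when $d = 3$ or $d = 4$ the estimates from Lemma~\ref{lemma:proof-inequalities} become tight and strict inequalities can barely fail. In these boundary cases I would refine using $t_n \in \{0,1\}$, Corollary~\ref{corollary:proof-two-lines} (at most one line in $S$ through $P$), and Corollary~\ref{corollary:proof-two-lines-cubic} to rule out the remaining configurations. The second obstacle is the degenerate situation where $T_i^1$ is singular at $P_1$ (only possible if $n = 1$ and $P_1$ sits over a cusp-type tangent direction of $T_1$); this is handled either by replacing $T_i^1$ with an analytic branch or by a further blow-up in the iterative construction, where the same two-curve inversion argument works against the newly created exceptional divisor.
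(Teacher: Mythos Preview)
Your high-level plan---decompose $D=aT_i+\Omega$ and apply Theorem~\ref{theorem:Trento} at $P_1$ with $Z_1=E_1$ and $Z_2=T_i^1$---is exactly the paper's strategy, but several steps in your execution do not close.  First, Theorem~\ref{theorem:Trento} requires $\lambda\,\mathrm{mult}_{P_1}(\Omega^1)\leqslant 1$, which you never check; the paper verifies this by bounding $n_1\leqslant\frac{d-1}{2}$ via $\Omega^1\cdot T_1^1$ and invoking Lemma~\ref{lemma:proof-inequalities}(i).  Second, your smoothness argument is circular: Lemma~\ref{lemma:proof-two-smooth-curves} needs \emph{two} components of $T_P$ through $P$ distinct from $T_n$, so it says nothing when only $T_i$ (and perhaps $T_n$) passes through $P$; the paper instead proves $t_1^1=1$ and then $t_1=1$ directly from intersection bounds.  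For $n=1$ your ``analytic branch'' fix is not licensed by Theorem~\ref{theorem:Trento} as stated, and in any case the paper disposes of $n=1$ (and of $P_1\in T_n^1$) without Trento at all: $T_P^1\cdot D^1\geqslant m_1$ gives $2m_0+m_1\leqslant d$, while Lemma~\ref{lemma:Skoda} and Corollary~\ref{corollary:proof-Skoda-S1} force $2m_0+m_1>\frac{3}{\lambda}\geqslant d$.

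Third, your claimed contradiction from the first Trento alternative is wrong: with $t_i=1$ you get $\lambda(m_0+a)>2$, but $m_0+a\leqslant\frac{d+1}{2}+\frac{1}{d_i}\leqslant\frac{d+3}{2}$, which contradicts Lemma~\ref{lemma:proof-inequalities}(iii) only for $d\geqslant 5$, not Lemma~\ref{lemma:proof-inequalities}(v) in general.  This is not a minor slip: for $d\in\{3,4\}$ neither Trento alternative yields a direct contradiction, and the ``refinement'' you allude to is in fact the bulk of the paper's proof.  The paper uses the Trento inequalities to force $t_n=0$, hence a second component $T_2$ through $P$ with $n\geqslant 3$ (so $d\geqslant 4$ by Corollary~\ref{corollary:proof-two-lines-cubic}); then Lemmas~\ref{lemma:proof-two-smooth-curves} and~\ref{lemma:proof-two-lines} make $T_1,T_2$ smooth and tangent at $P$, so $P_1\in T_1^1\cap T_2^1$; a two-component decomposition $D=aT_1+bT_2+\Delta$ then forces $d=4$, and the final contradiction comes from applying Theorem~\ref{theorem:adjunction} separately along $T_1^1$ and $T_2^1$ to get $a>\frac{11}{15}$ and $b>\frac{2}{5}$, violating $a+2b\leqslant 1$.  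None of this is recoverable from the single-component Trento bound you wrote down.
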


\begin{proof}
Suppose that $P_1\in T_P^1$. Let us seek for a contradiction.
If $T_P$ is irreducible, then
$$
d-2m_0=T_P^1\cdot D^1\geqslant m_1,
$$
so that $m_1+2m_0\leqslant d$.
This inequality gives
$$
\frac{3}{\lambda}<m_1+2m_0\leqslant d,
$$
because $2m_0\geqslant m_0+m_1>\frac{2}{\lambda}$ by \eqref{equation:proof-Skoda-S1-tak}.
This shows that $T_P$ is reducible, because $\lambda\leqslant\frac{3}{d}$ by Lemma~\ref{lemma:proof-inequalities}(vi).

We see that $n\geqslant 2$. If $P_1\in T_n^1$, then
$$
d-1-m_0\geqslant d_n-m_0=d_n-m_0t_n=T_n^1\cdot D^1\geqslant m_1,%
$$
which is impossible, because $m_0+m_1>\frac{2}{\lambda}$ by \eqref{equation:proof-Skoda-S1-tak}, and $\lambda\leqslant\frac{2}{d-1}$ by Lemma~\ref{lemma:proof-inequalities}(i).
Thus, we see that $P_1\not\in T_n^1$.

Without loss of generality, we may assume that $P_1\in T_1^1$.
Put $\Upsilon=\sum_{i=2}^{n}a_iT_i+\Delta$, and
denote by $\Upsilon^1$ the proper transform of the $\mathbb{Q}$-divisor $\Omega$ on the surface $S_1$.
Put $n_0=\mathrm{mult}_{P}(\Upsilon)$, put $n_1=\mathrm{mult}_{P_1}(\Omega^1)$ and put $t_1^1=\mathrm{mult}_{P_1}(T_1^1)$.
Then
$$
d_1+a_1d_1\big(d-d_1-1\big)-n_0t_1=T_1^1\cdot\Upsilon^1\geqslant t_1^1n_1,
$$
which implies that $n_0t_1+n_1t_1^1\leqslant d_1+a_1d_1(d-d_1-1)$.

Note that $t_1^1\leqslant t_1$.
Moreover, we have $a_1\leqslant\frac{1}{d_1}$  by Lemma~\ref{lemma:proof-Pukhlikov-refined}.
Thus, if $t_1^1\geqslant 2$, then
$$
2\big(n_0+n_1\big)\leqslant t_1^1\big(n_0+n_1\big)\leqslant n_0t_1+n_1t_1^1\leqslant d_1+a_1d_1\big(d-d_1-1\big)\leqslant d_1+\big(d-d_1-1\big)=d-1,
$$
which implies that $n_0+n_1\leqslant\frac{d-1}{2}$.
Moreover, if $n_0+n_1\leqslant\frac{d-1}{2}$, then it follows from \eqref{equation:proof-Skoda-S1-tak} that
$$
\frac{d+3}{2}=2+\frac{d-1}{2}\geqslant 2a_1d_1+\frac{d-1}{2}\geqslant 2a_1t_1+\frac{d-1}{2}\geqslant a_1\big(t_1+t_1^1\big)+n_0+n_1=m_0+m_1>\frac{2}{\lambda}%
$$
which gives $d\leqslant 4$ by Lemma~\ref{lemma:proof-inequalities}(iii).
Thus, if $d\geqslant 5$, then $t_1^1=1$.
Furthermore, if $d\leqslant 4$, then $d_1\leqslant 3$, which implies that $t_{1}^1\leqslant 1$.
This shows that $t_1^1=1$ in all cases. Thus, the curve $T_1^1$ is smooth at $P_1$.

Applying Theorem~\ref{theorem:adjunction} to the log pair $(S_1, \lambda\Upsilon^1+\lambda a_1T_1^1+(\lambda(n_0+a_1t_1)-1)E_1)$,
we see that
$$
\lambda\big(d-1-n_0t_1\big)\geqslant\lambda\Big(d_1+a_1d_1\big(d-d_1-1\big)-n_0t_1\Big)=\lambda\Omega^1\cdot T_1^1>2-\lambda\big(n_0+a_1t_1\big),%
$$
because $a_1\leqslant\frac{1}{d_1}$.
Thus, we have $d-1+a_1t_1-n_0(t_1-1)>\frac{2}{\lambda}$.
But $m_0=a_1t_1+n_0>\frac{1}{\lambda}$ by Lemma~\ref{lemma:Skoda}.
Adding these  inequalities together, we obtain
\begin{equation}
\label{equation:lemma:proof-P1-Tp-1}
d-1+2a_1t_1-n_0(t_1-2)>\frac{3}{\lambda}.
\end{equation}
If $t_1\geqslant 2$, this gives
$$
d+1\geqslant d-1+2a_1d_1\geqslant d-1+2a_1t_1\geqslant d-1+2a_1t_1-n_0(t_1-2)>\frac{3}{\lambda}.
$$
because $a_1\leqslant\frac{1}{d_1}$.
One the other hand, if $d\geqslant 5$, then $\lambda\leqslant\frac{3}{d+1}$ by Lemma~\ref{lemma:proof-inequalities}(ii).
Thus, if $d\geqslant 5$, then $t_1=1$.
Moreover, if $d=3$, then $d_1\leqslant 2$, which implies that $t_1=1$ as well.
Furthermore, if $d=4$ and $t_1\ne 1$, then $d_1=3$, $t_1=2$, $\lambda=\frac{5}{8}$, which implies that
$$
\frac{1}{3}=\frac{1}{d_1}\geqslant a_1>\frac{9}{20}
$$
by \eqref{equation:lemma:proof-P1-Tp-1}. Thus, we see that $t_1=1$ in all cases.
This simply means that the curve $T_1$ is smooth at the point $P$.

Since $a_1\leqslant\frac{1}{d_1}$, we have
$$
d-1-n_0\geqslant d_1+a_1d_1\big(d-d_1-1\big)-n_0=\Omega^1\cdot T_1^1\geqslant n_1,%
$$
which implies that $n_1\leqslant\frac{n_0+n_1}{2}\leqslant\frac{d-1}{2}$.
Then $\lambda n_1\leqslant 1$ by Lemma~\ref{lemma:proof-inequalities}(i).
Hence, we can apply Theorem~\ref{theorem:Trento} to the log pair
$(S_1, \lambda\Upsilon^1+\lambda a_1T_1^1+(\lambda(n_0+a_1t_1)-1)E_1)$ at the point $P_1$.
This gives either
$$
\Upsilon^1\cdot T_1^1>\frac{4}{\lambda}-2(n_0+a_1)
$$
or $\Upsilon^1\cdot E_1>\frac{2}{\lambda}-2a_1$ (or both).
Since $a_1\leqslant\frac{1}{d_1}$, the former inequality gives
$$
d-1-n_0\geqslant d_1+a_1d_1\big(d-d_1-1\big)-n_0=\Upsilon^1\cdot T_1^1>\frac{4}{\lambda}-2(n_0+a_1).%
$$
Similarly, the latter inequality gives
$$
n_0=\lambda\Upsilon^1\cdot E_1>\frac{2}{\lambda}-2a_1.
$$
Thus, either $d-1+2a_1+n_0>\frac{4}{\lambda}$ or $2a_1+n_0>\frac{2}{\lambda}$ (or both).

If $t_n\geqslant 1$, then $d_n\ne 1$ by Lemma~\ref{lemma:proof-Tn-smooth}.
Thus, if $t_n\geqslant 1$, then
$$
d-1\geqslant d_n\geqslant a_1d_1d_n+n_0\geqslant 2a_1+n_0
$$
by Lemma~\ref{lemma:proof-Pukhlikov-refined}.
Therefore, if $t_n\geqslant 1$, then
$$
2(d-1)\geqslant d-1+2a+n_0>\frac{4}{\lambda}
$$
or $d-1\geqslant 2a+n_0>\frac{2}{\lambda}$, because $d-1+2a+n_0>\frac{4}{\lambda}$ or $2a+n_0>\frac{2}{\lambda}$.
In both cases, we get $\lambda>\frac{d-1}{2}$, which is impossible by Lemma~\ref{lemma:proof-inequalities}(i).
This shows that $t_n=0$, so that $P\not\in T_n$.

Since $T_1$ is smooth at $P$ and $P\not\in T_n$,
there must be another irreducible component of $T_P$ passing through $P$ that is different from $T_1$ and $T_n$.
In particular, we see that $n\geqslant 3$.
Without loss of generality, we may assume that $P\in T_2$.
Then $T_2$ is smooth at $P$ by Lemma~\ref{lemma:proof-two-smooth-curves}, so that $t_2=1$.
Moreover, the curves $T_1$ and $T_2$ are tangent at $P$ by Lemma~\ref{lemma:proof-two-lines}, which implies that $d\geqslant 4$.
Since $P_1\in T_1^1$, we see that $P_1\in T_2^1$ as well.

Put $\Omega=\sum_{i=3}^{n}a_iT_i+\Delta$ and $k_0=\mathrm{mult}_{P}(\Omega)$, so that $m_0=k_0+a_1+a_2$.
Then $a_1d_1+a_2d_2\leqslant 1$ by Lemma~\ref{lemma:proof-Pukhlikov-refined}.

Denote by $\Omega^1$ the proper transform of the $\mathbb{Q}$-divisor $\Omega$ on the surface $S_1$.
Put $k_1=\mathrm{mult}_{P_1}(\Omega^1)$. Then
$$
d-1-2k_0\geqslant d_1+d_2+\big(a_1d_1+a_2d_2\big)\big(d-d_1-d_2-1\big)-2k_0=\Omega^1\cdot\Big(T_1^1+T_2^1\Big)\geqslant 2k_1%
$$
because $a_1d_1+a_2d_2\leqslant 1$ and $d\geqslant d_1+d_2+d_n\geqslant d_1+d_2+1$.
This gives $k_0+k_1\leqslant\frac{d-1}{2}$.
On the other hand, we have
$$
2a_1+2a_2+k_0+k_1=m_0+m_1>\frac{2}{\lambda}
$$
by \eqref{equation:proof-Skoda-S1-tak}. Thus, we have
$$
\frac{d+3}{2}=2+\frac{d-1}{2}\geqslant
2\big(a_1d_1+a_2d_2\big)+\frac{d-1}{2}\geqslant
2a_1+2a_2+\frac{d-1}{2}\geqslant 2a_1+2a_2+k_0+k_1>\frac{2}{\lambda}
$$
because $a_1d_1+a_2d_2\leqslant 1$.
By Lemma~\ref{lemma:proof-inequalities}(iii) this gives $d=4$. Thus, we have $\lambda=\frac{5}{8}$.

Since $d=4>n\geqslant 3$, we have $n=3$.
Without loss of generality, we may assume that $d_1\leqslant d_2$.
By Lemma~\ref{lemma:proof-two-lines}, there exists at most one line in $S$ that passes through $P$.
This shows that $d_1=1$, $d_2=2$ and $d_3=1$. Thus, $T_1$ and $T_3$ are lines, $T_2$ is a conic,
$T_1$ is tangent to $T_2$ at $P$, and $T_3$ does not pass through $P$.
In particular, the curves $T_1^1$ and $T_1^2$ intersect each other transversally at $P_1$.

By Lemma~\ref{lemma:proof-T-intersections}, we have $T_1\cdot T_1=T_2\cdot T_2=-2$ and $T_1\cdot T_2=2$.
On the other hand, the log pair $(S_1, \lambda a_1 T_1^1+\lambda a_2 T_2^1+\lambda\Omega^1+(\lambda(a_1+a_2+k_0)-1)E_1)$
is not log canonical at the point $P_1$.
Thus, applying Theorem~\ref{theorem:adjunction} to this log pair and the curve $T_1^1$, we get
$$
\lambda\big(1+2a_1-2a_2-k_0\big)=\lambda\Omega^1\cdot T_1^1>2-\lambda(a_1+a_2+k_0)-\lambda a_2,%
$$
which implies that $3a_1>\frac{2}{\lambda}-1=\frac{11}{5}$, because $\lambda=\frac{5}{8}$.
Similarly, applying Theorem~\ref{theorem:adjunction} to this log pair and the curve $T_2^1$, we get
$$
\lambda\big(2-2a_1+2a_2-k_0\big)=\lambda\Omega^1\cdot T_2^1>2-\lambda(a_1+a_2+k_0)-\lambda a_1,%
$$
which implies that $3a_2>\frac{2}{\lambda}-2=\frac{6}{5}$.
Hence, we have $a_1>\frac{11}{15}$ and $a_2>\frac{2}{5}$, which is impossible, since $a_1+2a_2=a_1d_1+a_2d_2\leqslant 1$.
The obtained contradiction completes the proof of the lemma.
\end{proof}

Now we are going to show that the  curve $T_P$ has at most two irreducible components.
This follows from

\begin{lemma}
\label{lemma:proof-Tp-mult-2} One has $n\geqslant 2$ and $\mathrm{mult}_{P}(T_P)=2$.
Moreover, if $n=2$, then $P\in T_1\cap T_2$, both curves $T_1$ and $T_2$ are smooth at $P$, and $d_1\leqslant d_2$.
\end{lemma}

\begin{proof}
If $T_P$ is irreducible and $\mathrm{mult}_{P}(T_P)\geqslant 3$, then Lemma~\ref{lemma:Skoda} gives
$$
d=T_P\cdot D\geqslant 3m_0>\frac{3}{\lambda},
$$
which is impossible by Lemma~\ref{lemma:proof-inequalities}(vi).
Thus, if $n=1$, then $\mathrm{mult}_{P}(T_P)=2$.

To complete the proof, we may assume that $n\geqslant 2$.
Then $t_n=0$ or $t_n=1$ by Lemma~\ref{lemma:proof-Tn-smooth}.
In particular, there exists an irreducible component of the curve $T_P$ different from $T_n$ that passes through $P$.
Without loss of generality, we may assume that $P\in T_1$.

Put $\Upsilon=\sum_{i=2}^{n}a_iT_i+\Delta$,
and denote by $\Upsilon^1$ the proper transform of the $\mathbb{Q}$-divisor $\Omega$ on the surface $S_1$.
Put $n_0=\mathrm{mult}_{P}(\Upsilon)$.
Then the log pair $(S_1, \lambda\Upsilon^1+(\lambda(n_0+a_1t_1)-1)E_1)$ is not log canonical at $P_1$, since $P_1\not\in T_1^1$ by Lemma~\ref{lemma:proof-P1-Tp}.
In particular, it follows from Theorem~\ref{theorem:adjunction-2} that
$$
\lambda n_0=\lambda\Upsilon^1\cdot E_1>1,
$$
which implies that $n_0>\frac{1}{\lambda}$.
Thus, if $t_1\geqslant 2$, then it follows from Lemma~\ref{lemma:proof-T-intersections} that
$$
\frac{1}{\lambda}\geqslant\frac{d-1}{2}\geqslant \frac{d_1+a_1d_1(d-d_1-1)}{2}=\frac{\Upsilon\cdot T_1}{2}\geqslant \frac{t_1n_0}{2}\geqslant n_0>\frac{1}{\lambda},%
$$
because $a_1\leqslant\frac{1}{d_1}$ by Lemma~\ref{lemma:proof-Pukhlikov-refined}, and $\lambda\leqslant\frac{2}{d-1}$ by Lemma~\ref{lemma:proof-inequalities}(i).
This shows that $t_1=1$, so that the curve $T_1$ is smooth at $P$.

If $t_n=1$ and $n\geqslant 3$, then
$$
\frac{2}{\lambda}\geqslant d-1\geqslant d_1+d_n+ad_1(d-d_1-d_n-1)=\Upsilon\cdot\Big(T_1+T_n\Big)\geqslant 2n_0>\frac{2}{\lambda}.
$$
Thus, if $t_n=1$, then $n=2$. Vice versa, if $n=2$, then $t_n=1$, because $T_1$ is smooth at $P$.
Furthermore, if $n=2$, then $d_1\leqslant d_n$, because $d_n>\frac{d-1}{2}$ by Lemma~\ref{lemma:proof-Tn-smooth}.
Therefore, to complete the proof, we must show that $n=2$.

Suppose that $n\geqslant 3$. Let us seek for a contradiction.
We know that $P\not\in T_n$, so that $t_n=0$.
Then every irreducible component of the curve $T_P$ that contain $P$ is smooth at $P$ by Lemma~\ref{lemma:proof-two-smooth-curves}.
Hence, there should be at least one irreducible component of the curve $T_P$ containing $P$ that is different from $T_1$ and $T_n$.
Without loss of generality, we may assume that $P\in T_2$.

Put $\Omega=\sum_{i=3}^{n}a_iT_i+\Delta$ and $k_0=\mathrm{mult}_{P}(\Omega)$.
By Lemma~\ref{lemma:proof-Pukhlikov-refined}, we have $a_1d_1+a_2d_2\leqslant 1$.
Thus, it follows from Lemma~\ref{lemma:proof-T-intersections} that
$$
2k_0\leqslant\Delta\cdot\Big(T_1+T_2\Big)=d_1+d_2+\big(a_1d_1+a_2d_2\big)\big(d-d_1-d_2-1\big)\leqslant d_1+d_2+\big(d-d_1-d_2-1\big)=d-1.
$$
Hence, we have $k_0\leqslant\frac{d-1}{2}$.

Denote by $\Omega^1$ the proper transform of the $\mathbb{Q}$-divisor $\Omega$ on the surface $S_1$.
Then the log pair $(S_1, \lambda\Omega^1+(\lambda(k_0+a_1+a_2)-1)E_1)$ is not log canonical at $P_1$,
because $P_1\not\in T_1^1$ and $P_1\not\in T_2^1$ by Lemma~\ref{lemma:proof-P1-Tp}.
In particular, it follows from Theorem~\ref{theorem:adjunction} that
$$
\lambda k_0=\lambda\Omega^1\cdot E_1>1,
$$
which implies that $k_0>\frac{1}{\lambda}$.
This contradicts Lemma~\ref{lemma:proof-inequalities}(i), because $k_0\leqslant\frac{d-1}{2}$.
\end{proof}

Later, we will need the following simple

\begin{lemma}
\label{lemma:proof-mult-quartic-11-5} Suppose that $d=4$. Then
$m_0\leqslant\frac{11}{5}$.
\end{lemma}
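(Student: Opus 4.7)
The plan is to reduce to the information already extracted in Lemma~\ref{lemma:proof-Tp-mult-2}: one has $\mathrm{mult}_{P}(T_{P})=2$, and if $T_{P}$ is reducible then $n=2$ with $P\in T_1\cap T_2$ and both $T_1$, $T_2$ smooth at $P$. From this setup I split into the irreducible and reducible cases.

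If $T_{P}$ is irreducible then $T_{P}=T_{n}$ and $T_{P}\not\subset\mathrm{Supp}(D)$ by our standing assumption, so $T_{P}\cdot D=d=4$, and since $T_{P}$ has an ordinary double point (or worse, but in any case multiplicity exactly $2$) at $P$, I get $2m_{0}\leqslant T_{P}\cdot D=4$, hence $m_{0}\leqslant 2<\tfrac{11}{5}$.

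If $T_{P}$ is reducible, then as $t_i\leqslant 1$ (smoothness) and $t_i\geqslant 1$ ($P\in T_i$), I get $t_1=t_2=1$. With $d_1+d_2=4$ and $d_1\leqslant d_2$, the possibilities are $(d_1,d_2)=(1,3)$ or $(2,2)$. I write $D=aT_1+\Omega$ with $T_1\not\subset\mathrm{Supp}(\Omega)$ and put $n_{0}=\mathrm{mult}_{P}(\Omega)$, so that $m_{0}=a+n_{0}$. Two linear constraints on $(a,n_{0})$ then suffice: from Lemma~\ref{lemma:proof-Pukhlikov-refined} applied with $k=1$ I obtain $ad_1d_2+t_2n_{0}\leqslant d_2$, i.e.\ $n_{0}\leqslant d_2-ad_1d_2$; and from Lemma~\ref{lemma:proof-T-intersections}, since $\Omega$ does not contain $T_1$, I have $n_{0}\leqslant\Omega\cdot T_1=d_1+ad_1(d-d_1-1)$.

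For $(d_1,d_2)=(1,3)$ these read $n_{0}\leqslant 3-3a$ and $n_{0}\leqslant 1+2a$, giving
\[
m_{0}=a+n_{0}\leqslant\min\bigl\{3-2a,\,1+3a\bigr\}\leqslant\frac{11}{5},
\]
with equality achievable only at $a=\tfrac{2}{5}$; this is the sharp case that explains the constant $\tfrac{11}{5}$, and is the main (though easy) obstacle. For $(d_1,d_2)=(2,2)$ the two bounds become $n_{0}\leqslant 2-4a$ and $n_{0}\leqslant 2+2a$, the former being tighter for $a\geqslant 0$, and then $m_{0}\leqslant 2-3a\leqslant 2<\tfrac{11}{5}$. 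Combining all cases yields $m_{0}\leqslant\tfrac{11}{5}$.
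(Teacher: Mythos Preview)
Your proof is correct and follows essentially the same approach as the paper's: both reduce via Lemma~\ref{lemma:proof-Tp-mult-2} to the case $n=2$, $(d_1,d_2)=(1,3)$, and there derive the two linear constraints $n_0\leqslant 3-3a$ (from Lemma~\ref{lemma:proof-Pukhlikov-refined}) and $n_0\leqslant 1+2a$ (from $\Omega\cdot T_1$), whose combination yields $m_0\leqslant\tfrac{11}{5}$. The only cosmetic difference is that the paper dispatches the irreducible case and the $(2,2)$ case together via the single observation $d_n=T_n\cdot D\geqslant t_n m_0$ (giving $m_0\leqslant 2$ directly), whereas you treat $(2,2)$ by the same two-inequality machinery; both are equally valid.
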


\begin{proof}
If $n=1$, then
$$
2t_n\geqslant d_n=T_n\cdot D\geqslant t_nm_0,
$$
so that $m_0\leqslant 2<\frac{11}{5}$.
Thus, we may assume that $n\ne 1$.
Then it follows from Lemma~\ref{lemma:proof-Tp-mult-2} that $n=2$,
$P\in T_1\cap T_2$, both curves $T_1$ and $T_2$ are smooth at $P$, and $d_1\leqslant d_2$.

If $d_2=2$, then $m_0\leqslant 2<\frac{11}{5}$, because
$$
2=T_2\cdot D\geqslant m_0.
$$
Thus, we may assume that $d_2\ne 2$. Then $d_1=1$ and $d_2=3$.
Then $\mathrm{mult}_{P}(\Delta)+3a_1\leqslant 3$ by Lemma~\ref{lemma:proof-Pukhlikov-refined}.
Moreover, we have
$$
1+2a_1=T_1\cdot\Delta\geqslant\mathrm{mult}_{P}(\Delta).
$$
The obtained inequalities give $m_0=\mathrm{mult}_{P}(\Delta)+a_1\leqslant\frac{11}{5}$.
\end{proof}

Let $f_2\colon S_2\to S_1$ be a blow up of the point $P_1$. Denote
by $E_2$ the $f_2$-exceptional curve, denote by $E_1^2$ the proper
transform of the curve $E_1$ on the surface $S_2$, and denote by
$D^2$ the proper transform of the $\mathbb{Q}$-divisor $D$ on the
surface $S_2$. Then
$$
K_{S_2}+\lambda D^2+\big(\lambda
m_0-1\big)E_1^2+\Big(\lambda\big(m_0+m_1\big)-2\Big)E_2\sim_{\mathbb{Q}}
f_2^*\Big(K_{S_1}+\lambda D^1+\big(\lambda m_0-1\big)E_1\Big).
$$
By Remark~\ref{remark:log-pull-back}, the log pair $(S_2, \lambda
D^2+(\lambda m_0-1)E_1^2+(\lambda(m_0+m_1)-2)E_2)$ is not log
canonical at some point $P_2\in E_1$.

\begin{lemma}
\label{lemma:proof-mult-S-2} One has
$m_0+m_1\leqslant\frac{3}{\lambda}$.
\end{lemma}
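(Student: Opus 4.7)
The plan is to exhibit an effective divisor $M\sim H_S$ that is smooth at $P$, whose proper transform on $S_1$ passes through $P_1$, and which shares no irreducible component with $D$. Once such $M$ is in hand, the inequality $m_0+m_1\leqslant M\cdot D=d$ will follow from elementary intersection theory on $S_1$, and the conclusion from the numerical inequality $d\leqslant\frac{3}{\lambda}=\frac{3d(d-2)}{2d-3}$, which is equivalent to $2d-3\leqslant 3(d-2)$ and hence holds for every $d\geqslant 3$.

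The crucial point is to observe that no line in $S$ passing through $P$ has tangent direction $P_1$. Indeed, any such line $\ell$ satisfies $\ell\subset T_PS\cap S=T_P$, since its tangent at $P$ lies in $T_PS$; hence the proper transform $\ell^1$ would be a component of $T_P^1$ passing through $P_1$, contradicting Lemma~\ref{lemma:proof-P1-Tp}. Let $\ell_1\subset T_PS$ be the line through $P$ corresponding to the direction $P_1\in E_1=\mathbb{P}(T_PS)$; by the observation $\ell_1\not\subset S$. Consider the pencil $\Lambda\subset|H_S|$ of divisors of the form $\Pi\cap S$ for hyperplanes $\Pi\subset\mathbb{P}^3$ containing $\ell_1$. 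Its base locus on $S$ is the finite scheme $\ell_1\cap S$, so $\Lambda$ has no fixed components. Since $D$ has only finitely many irreducible components, a general $M\in\Lambda$ shares no component with $D$. For such $M$ the hyperplane $\Pi$ differs from $T_P$, so $M=\Pi\cap S$ is smooth at $P$ with tangent line $\ell_1$; hence $\mathrm{mult}_PM=1$ and $M^1$ is smooth at $P_1$ with $\mathrm{mult}_{P_1}M^1=1$.

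Since $M$ has multiplicity $1$ at $P$, one has $M^1=f_1^*M-E_1$, while $D^1=f_1^*D-m_0E_1$. Therefore $M^1\cdot D^1=M\cdot D-m_0=d-m_0$, and as $M$ and $D$ share no components, $\mathrm{mult}_{P_1}(M^1\cdot D^1)\geqslant\mathrm{mult}_{P_1}(M^1)\cdot\mathrm{mult}_{P_1}(D^1)=m_1$, giving $m_0+m_1\leqslant d\leqslant\frac{3}{\lambda}$. The principal obstacle in this plan is precisely the observation ruling out lines in $S$ through $P$ tangent to $P_1$; it uses Lemma~\ref{lemma:proof-P1-Tp} essentially. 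Were such a line $\ell$ to exist and lie in $\mathrm{Supp}(D)$, then $\Lambda$ would have $\ell$ as a fixed component, and a direct computation using $\ell\cdot D'=1+a(d-2)$ with $D=a\ell+D'$ together with $a\leqslant 1$ would only yield $m_0+m_1\leqslant 1+d$, insufficient for $d=3$ and $d=4$.
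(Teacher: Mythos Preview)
Your proof is correct and takes a genuinely different route from the paper's. You prove the sharper bound $m_0+m_1\leqslant d$ directly, by producing a hyperplane section $M$ smooth at $P$ with tangent direction $P_1$ and disjoint support from $D$; the key input is Lemma~\ref{lemma:proof-P1-Tp}, which guarantees that the line $\ell_1\subset T_PS$ in the direction $P_1$ is not contained in $S$, so the pencil of hyperplanes through $\ell_1$ has no fixed component on $S$. The intersection computation $M^1\cdot D^1=d-m_0\geqslant m_1$ then finishes it, and $d\leqslant\frac{3}{\lambda}$ is immediate from Lemma~\ref{lemma:proof-inequalities}(vi).

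The paper instead argues by contradiction: from $m_0+m_1>\frac{3}{\lambda}$ and $m_1\leqslant m_0$ it gets $m_0>\frac{3}{2\lambda}$, and combining with the bound $m_0\leqslant\frac{d+1}{2}$ (Lemma~\ref{lemma:proof-m0-small}) forces $d\leqslant 4$; the cases $d=3$ and $d=4$ are then handled separately by ad hoc multiplicity estimates (using Lemma~\ref{lemma:proof-mult-quartic-11-5} for $d=4$ and a short computation with $T_1$, $T_2$ for $d=3$). Your argument is cleaner and uniform in $d$, and in fact anticipates the construction the paper carries out later in Lemma~\ref{lemma:proof-pencil} and Remark~\ref{remark:proof-smooth-curves}, where exactly this pencil of hyperplane sections through $P$ with prescribed tangent direction is used on $S_2$. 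The paper's approach has the minor advantage of relying only on already-proved scalar inequalities rather than a new geometric construction, but your method is both shorter and yields the stronger conclusion $m_0+m_1\leqslant d$.
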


\begin{proof}
Suppose that $m_0+m_1>\frac{3}{\lambda}$.
Then $2m_0\geqslant m_0+m_1>\frac{3}{\lambda}$.
But $m_0\leqslant\frac{d+1}{2}$ by Lemma~\ref{lemma:proof-m0-small}.
Then $\lambda>\frac{3}{d+1}$.
Thus, we have $d\leqslant 4$ by Lemma~\ref{lemma:proof-inequalities}(ii).
Moreover, if $d=4$, then
$$
\frac{22}{5}\geqslant 2m_0\geqslant m_0+m_1>\frac{3}{\lambda}=\frac{24}{5}%
$$
by Lemma~\ref{lemma:proof-mult-quartic-11-5}. This shows that $d=3$.

We have $\lambda=1$. If $n=1$, then
$$
3=T_P\cdot D\geqslant 2m_0\geqslant m_1+m_0>\frac{3}{\lambda}=3,
$$
which is absurd. Hence, it follows from Lemma~\ref{lemma:proof-Tp-mult-2} that $n=2$,
$d_1=1$, $d_2=2$ and $P\in T_1\cap T_2$.

We have $m_0=\mathrm{mult}_{P}(\Delta)+a_1$.
On the other hand, we have $\mathrm{mult}_{P}(\Delta)+2a_1\leqslant 2$ by Lemma~\ref{lemma:proof-Pukhlikov-refined}.
Moreover, we have
$$
1+a_1=T_1\cdot\Omega\geqslant \mathrm{mult}_{P}(\Delta),
$$
which implies that $\mathrm{mult}_{P}(\Delta)-a_1\leqslant 1$.
Adding these inequalities, we get
$$
3\geqslant 2\mathrm{mult}_{P}(\Delta)+a=\mathrm{mult}_{P}(\Delta)+m_0\geqslant m_1+m_0>\frac{3}{\lambda}=3,
$$
because $\mathrm{mult}_{P}(\Delta)\geqslant m_1$, since $P_1\not\in T_1^1$ by Lemma~\ref{lemma:proof-P1-Tp}.
\end{proof}

Thus, the log pair  $(S_2, \lambda D^2+(\lambda m_0-1)E_1^2+(\lambda(m_0+m_1)-2)E_2)$ is log canonical at every point of the curve $E_2$
that is different from the point $P$ by Lemma~\ref{lemma:log-pull-back}.

\begin{lemma}
\label{lemma:proof-E1-E2} One has $P_2\ne E_1^2\cap E_2$.
\end{lemma}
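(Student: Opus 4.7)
The plan is to derive a contradiction from $P_2=E_1^2\cap E_2$ by applying inversion of adjunction along $E_1^2$. The coefficient $\lambda m_0-1$ on $E_1^2$ is at most $1$ by Lemma~\ref{lemma:proof-first-discrepancy}, and $E_1^2$ is not contained in the support of the remaining boundary, so Theorem~\ref{theorem:adjunction} yields
$$
\mathrm{mult}_{P_2}\Big(\big(\lambda D^2+(\lambda(m_0+m_1)-2)E_2\big)\cdot E_1^2\Big)>1.
$$
A routine pullback calculation through $f_1\circ f_2$ gives $D^2\cdot E_1^2=m_0-m_1$ (using $(E_1^2)^2=-2$ and $E_1^2\cdot E_2=1$), and since $E_1^2$ meets $E_2$ transversally at $P_2$, this inequality reduces to $2\lambda m_0>3$.

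Combining $\lambda m_0>\frac{3}{2}$ with Lemma~\ref{lemma:proof-m0-small} forces $\lambda>\frac{3}{d+1}$. For $d\geqslant 5$ this contradicts Lemma~\ref{lemma:proof-inequalities}(ii). For $d=4$ it gives $m_0>\frac{12}{5}$, ruled out by Lemma~\ref{lemma:proof-mult-quartic-11-5}. For $d=3$ with $T_P$ irreducible, the estimate $3=T_P\cdot D\geqslant 2m_0$ contradicts $m_0>\frac{3}{2}$.

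The main obstacle is the remaining subcase $d=3$ with $T_P$ reducible. By Lemma~\ref{lemma:proof-Tp-mult-2} I may write $T_P=T_1+T_2$ with $T_1$ a line and $T_2$ a conic, both smooth at $P$; Lemma~\ref{lemma:proof-Pukhlikov-refined} applied with $T_n=T_1$ would already yield $m_0\leqslant 1$, so I may assume $T_n=T_2$. Writing $D=aT_1+\Delta$ with $\Delta$ containing neither component and setting $n_0=\mathrm{mult}_P(\Delta)$, the condition $P_1\notin T_1^1$ (Lemma~\ref{lemma:proof-P1-Tp}) forces $T_1^2$ to miss $P_2$ and $m_1=\mathrm{mult}_{P_1}(\Delta^1)$; substituting $\Delta^2\cdot E_1^2=n_0-n_1$ into the adjunction inequality then gives $a+2n_0>3$. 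To close, Lemma~\ref{lemma:proof-Pukhlikov-refined} yields $n_0\leqslant 2-2a$, while intersecting $\Delta$ with $T_1$ (using $T_1^2=-1$ from Lemma~\ref{lemma:proof-T-intersections}) yields $n_0\leqslant 1+a$; adding these two bounds gives $2n_0\leqslant 3-a$, hence $a+2n_0\leqslant 3$, contradicting $a+2n_0>3$.
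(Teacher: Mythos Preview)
Your proof is correct and follows essentially the same route as the paper: adjunction along $E_1^2$ yields $2\lambda m_0>3$, which is dispatched for $d\geqslant 4$ via Lemmas~\ref{lemma:proof-m0-small}, \ref{lemma:proof-inequalities}(ii), and \ref{lemma:proof-mult-quartic-11-5}, while the residual cubic case is closed by the two bounds $n_0\leqslant 2-2a$ and $n_0\leqslant 1+a$ summed against the adjunction inequality $2n_0+a>3$. The only cosmetic difference is that you re-argue why the non-support component $T_n$ must be the conic, whereas the paper has this from Lemma~\ref{lemma:proof-Tn-smooth}.
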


\begin{proof}
Suppose that $P_2=E_1^2\cap E_2$. Then Theorem~\ref{theorem:adjunction} gives
$$
\lambda\big(m_0-m_1\big)=\lambda D^2\cdot E_1^2>3-\lambda\big(m_0+m_1\big),%
$$
which implies that $m_0>\frac{3}{2\lambda}$.
But $m_0\leqslant\frac{d+1}{2}$ by Lemma~\ref{lemma:proof-m0-small}.
Therefore, we have $\lambda>\frac{3}{d+1}$, which implies that $d\leqslant 4$ by Lemma~\ref{lemma:proof-inequalities}(ii).
If $d=4$, then
$$
\frac{12}{5}=\frac{3}{2\lambda}<m_0\leqslant\frac{11}{5}
$$
by Lemma~\ref{lemma:proof-mult-quartic-11-5}. Thus, we have $d=3$.

One has $\lambda=1$. If $n=1$, then
$$
3=T_P\cdot D\geqslant 2m_0>\frac{3}{\lambda}=3,
$$
which is absurd.
Hence, it follows from Lemma~\ref{lemma:proof-Tp-mult-2} that $n=2$, $d_1=1$, $d_2=2$ and $P\in T_1\cap T_2$.

We have $m_0=\mathrm{mult}_{P}(\Delta)+a_1$.
Moreover, we have $\mathrm{mult}_{P}(\Delta)+2a_1\leqslant 2$ by Lemma~\ref{lemma:proof-Pukhlikov-refined},
Then $2\mathrm{mult}_{P}(\Delta)+a_1\leqslant 3$, because
$$
1+a_1=T_1\cdot\Delta\geqslant \mathrm{mult}_{P}(\Delta).
$$

Denote by $\Delta^1$ the proper transform of the divisor $\Delta$ on the surface $S_1$,
and denote by $\Delta^2$ the proper transform of the divisor $\Delta$ on the surface $S_2$.
Then $m_1=\mathrm{mult}_{P_1}(\Delta^1)$, because $P_1\not\in T_1^1$ by Lemma~\ref{lemma:proof-P1-Tp}.
Thus, the log pair $(S_2, \lambda\Delta^2+(m_0-1)E_1^2+(m_0+m_1-2)E_2)$ is not log canonical at $P_2$.
Applying Theorem~\ref{theorem:adjunction} to this pair and the curve $E_1^2$, we get
$$
\mathrm{mult}_{P}(\Delta)-m_1=\Delta^2\cdot E_1^2>3-m_0-m_1,%
$$
which implies that $2\mathrm{mult}_{P}(\Delta)+a_1>3$.
The latter is impossible, because we already proved that $2\mathrm{mult}_{P}(\Delta)+a_1\leqslant 3$.
\end{proof}

Thus, the log pair $(S_2, \lambda D^2+(\lambda(m_0+m_1)-2)E_2)$ is not log canonical at $P_2$. Then Lemma~\ref{lemma:Skoda} gives

\begin{equation}
\label{equation:proof-Skoda-S-2}
m_0+m_1+m_2>\frac{3}{\lambda}.
\end{equation}

Denote by $T_P^2$ the proper transform of the curve $T_P$ on the
surface $S^2$. Then
$$
T_P^2+E_1^2\sim (f_{1}\circ f_2)^*(\mathcal{O}_{S}(1))-f_2^*(E_1)-E_2,%
$$
because $T_P^1\sim f_{1}^*(\mathcal{O}_{S}(1))-2E_1$ by
Lemma~\ref{lemma:proof-Tp-mult-2}, and $P_1\not\in T_P^1$ by
Lemma~\ref{lemma:proof-P1-Tp}.

\begin{lemma}
\label{lemma:proof-pencil} The linear system $|T_P^2+E_1^2|$ is a
pencil that does not have base points in $E_2$.
\end{lemma}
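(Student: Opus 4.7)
The plan is to realize $|T_P^2+E_1^2|$ as the proper transform on $S_2$ of a pencil of hyperplane sections of $S$. Let $L\subset T_PS\subset\mathbb{P}^3$ be the tangent line at $P$ corresponding to the direction $P_1\in E_1$ (that is, the line in the tangent plane whose strict transform on $S_1$ meets $E_1$ at $P_1$). The hyperplanes of $\mathbb{P}^3$ containing $L$ form a pencil $\Pi\cong\mathbb{P}^1$, which cuts out on $S$ a one-parameter family $\mathcal{P}$ of hyperplane sections through $P$ having tangent line $L$ at $P$.

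I first check that $\mathcal{P}$ lands in $|T_P^2+E_1^2|$ and accounts for all of it. When $H'=T_PS\in\Pi$, the section $T_P$ has $\mathrm{mult}_PT_P=2$ by Lemma~\ref{lemma:proof-Tp-mult-2} and $P_1\notin T_P^1$ by Lemma~\ref{lemma:proof-P1-Tp}, so $(f_1\circ f_2)^*T_P = T_P^2+2E_1^2+2E_2$, which produces the distinguished element $T_P^2+E_1^2$. For $H'\neq T_PS$, the section $C=H'\cap S$ is smooth at $P$ with tangent $L$, so $m_C=1$ and $C^1\cdot E_1=1$ with intersection $\{P_1\}$ transverse; consequently $(f_1\circ f_2)^*C = C^2+E_1^2+2E_2$ and hence $C^2\sim T_P^2+E_1^2$. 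Distinct hyperplanes yield distinct divisors, so $\Pi$ injects into $|T_P^2+E_1^2|$. For the reverse inclusion, the relation $T_P^2+E_1^2\sim(f_1\circ f_2)^*\mathcal{O}_S(1)-E_1^2-2E_2$ together with the projection formula identifies $H^0(S_2,\mathcal{O}(T_P^2+E_1^2))$ with the space of linear forms on $\mathbb{P}^3$ that vanish at $P$ and whose differential at $P$ vanishes along $L$, i.e., hyperplanes containing $L$. This is 2-dimensional, so $\dim|T_P^2+E_1^2|=1$ and $|T_P^2+E_1^2|$ is exactly $\mathcal{P}$.

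For base-point-freeness on $E_2$, I show every point of $E_2$ lies on some element of the pencil. The distinguished divisor $T_P^2+E_1^2$ meets $E_2$ only at $E_1^2\cap E_2$, because $T_P^2\cdot E_2=0$ (as $P_1\notin T_P^1$) and $E_1^2\cdot E_2=1$ transversally. For $C^2$ with $C\in\mathcal{P}\setminus\{T_P\}$, the transverse intersection of $C^1$ with $E_1$ at $P_1$ forces $C^2$ to meet $E_2$ in a single point $q_C\neq E_1^2\cap E_2$, determined by the tangent direction of $C^1$ at $P_1$ in $S_1$. As $H'$ varies in $\Pi\setminus\{T_PS\}$, this point $q_C$ sweeps all of $E_2\setminus\{E_1^2\cap E_2\}$: concretely, writing $S$ locally as $z=\phi(x,y)$ near $P$ with $L$ corresponding to the direction $(1:t_1)$, a direct computation gives $q_C$ as an affine function of the pencil parameter with nonzero slope equal to the quadratic form $\phi_{xx}(0)+2\phi_{xy}(0)t_1+\phi_{yy}(0)t_1^2$, and this is precisely the quantity whose nonvanishing expresses $P_1\notin T_P^1$. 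Combined with the point covered by $T_P^2+E_1^2$, every point of $E_2$ is covered, so $|T_P^2+E_1^2|$ has no base point on $E_2$. The principal obstacle is the dimension computation, handled by the pushforward argument above.
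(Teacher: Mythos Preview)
Your proof is correct and rests on the same geometric picture as the paper's: the pencil $|T_P^2+E_1^2|$ is precisely the pencil of hyperplane sections of $S$ through the tangent line $L$ at $P$ in the direction $P_1$, and the crucial input is $P_1\notin T_P^1$ (Lemma~\ref{lemma:proof-P1-Tp}). The execution, however, differs in two places. For the dimension count, the paper simply observes that $|T_P^1+E_1|$ on $S_1$ is a two-dimensional base-point-free linear system (it is $|f_1^*H_S-E_1|$, the hyperplane sections through $P$), so imposing passage through $P_1$ drops the dimension by one; you instead identify $H^0$ directly via the projection formula, which is fine but heavier. For base-point-freeness on $E_2$, the paper takes the minimal route: it exhibits two members of the pencil, namely $T_P^2+E_1^2$ and the proper transform $C^2$ of any other hyperplane section $C$ through $L$, and notes that $T_P^2+E_1^2$ meets $E_2$ at $E_1^2\cap E_2$ while $C^2$, being the transform of a curve meeting $E_1$ transversally at $P_1$, meets $E_2$ elsewhere. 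Your local-coordinate computation showing the map $\Pi\to E_2$ has nonzero slope equal to the Hessian form evaluated at $t_1$ is correct (and pleasantly makes the role of $P_1\notin T_P^1$ explicit), but it proves more than is needed: two members meeting $E_2$ at distinct points already suffices, since $(T_P^2+E_1^2)\cdot E_2=1$.
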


\begin{proof}
Since $|T_P^1+E_1|$ is a two-dimensional linear system that does
not have base points, $|T_P^2+E_1^2|$ is a pencil. Let $C$ be a
curve in $|T_P^1+E_1|$ that passes through $P_1$ and is different
from $T_P^1+E_1$. Then $C$ is smooth at $P$, since $P\in f_1(C)$
and $f_1(C)$ is a hyperplane section of the surface $S$ that is
different from $T_P$. Since $C\cdot E_1=1$, we see that $T_P^1+E_1$
and $C$ intersect transversally at $P_1$. Thus, the proper
transform  of the curve $C$ on the surface $S_2$ is contained in
$|T_P^1+E_1|$ and have no common points with $T_P^2+E_1^2$ in
$E_2$. This shows that the pencil $|T_P^1+E_1|$ does not have base
points in $E_2$.
\end{proof}

Let $Z^2$ be the curve in $|T_P^2+E_2|$ that passes through the point $P_2$.
Then
$$
Z^2\ne T_P^2+E_1^2,
$$
because $P_2\ne E_1^2\cap
E_2$ by Lemma~\ref{lemma:proof-E1-E2}. Then $Z_2$ is smooth at
$P_2$. Put $Z=f_1\circ f_2(Z^2)$ and $Z^1=f_2(Z^2)$. Then $P\in Z$
and $P_1\in Z^1$. Moreover, the curve $Z$ is smooth at $P$, and
the curve $Z_1$ is smooth at $P_1$. Furthermore, the curve $Z$ is
reduced by Lemma~\ref{lemma:Pukhlikov}.

The log pair $(S,\lambda Z)$ is log canonical at $P$, because $Z$ is smooth at $P$. 
Note that 
$$
Z\sim_{\mathbb{Q}} D.
$$ 
Thus, we may assume that $\mathrm{Supp}(D)$ does not contain at least one irreducible component of the curve $Z$ by Remark~\ref{remark:convexity}. 
Denote this irreducible component by $\overline{Z}$, and denote its degree in $\mathbb{P}^3$ by $\bar{d}$.
Then $\bar{d}\leqslant d$.

\begin{lemma}
\label{lemma:proof-bar-Z-does-not-contain-P} One has
$P\not\in\overline{Z}$.
\end{lemma}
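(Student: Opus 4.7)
The plan is to suppose for contradiction that $P\in\bar{Z}$ and extract enough intersection-theoretic information on $S_2$ to violate Corollary~\ref{corollary:proof-Skoda-S-2}. Since $Z$ is smooth at $P$ (Remark~\ref{remark:proof-smooth-curves}) and reduced (Lemma~\ref{lemma:Pukhlikov}), only one irreducible component of $Z$ can pass through $P$; if $P\in\bar{Z}$, that component must be $\bar{Z}$ itself, and it is smooth at $P$ with $\mathrm{mult}_P(\bar{Z})=1$. Propagating up the tower $S_2\to S_1\to S$, the facts that $Z^1$ is smooth at $P_1$ and $Z^2$ is smooth at $P_2$ (again by Remark~\ref{remark:proof-smooth-curves}) force $P_1\in\bar{Z}^1$ and $P_2\in\bar{Z}^2$, each with multiplicity $1$; the other components of $Z$ miss $P$ entirely, so their strict transforms contribute nothing at $P_1$ or $P_2$.

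Next I would compute intersection numbers. Since $\bar{Z}\not\subset\mathrm{Supp}(D)$ and $D\sim_{\mathbb{Q}}H_S$, one has $\bar{Z}\cdot D=\bar{d}$ on $S$. Using $\mathrm{mult}_P(\bar{Z})=\mathrm{mult}_{P_1}(\bar{Z}^1)=1$, pulling back through $f_1$ and $f_2$ gives
$$
\bar{Z}^2\cdot D^2=\bar{d}-m_0-m_1.
$$
On the other hand, since $P_2\in\bar{Z}^2$ with $\mathrm{mult}_{P_2}(\bar{Z}^2)=1$, I have $\bar{Z}^2\cdot D^2\geqslant m_2$. Combining,
$$
m_0+m_1+m_2\leqslant\bar{d}.
$$

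The contradiction then comes by combining this bound with the lower bound from Corollary~\ref{corollary:proof-Skoda-S-2}. Indeed, Lemma~\ref{lemma:proof-inequalities}(vi) gives $\lambda\leqslant\frac{3}{d}$, so $\frac{3}{\lambda}\geqslant d$, while $\bar{d}<d$ forces $\bar{d}\leqslant d-1$. Hence
$$
m_0+m_1+m_2\leqslant\bar{d}\leqslant d-1<d\leqslant\frac{3}{\lambda},
$$
which contradicts $m_0+m_1+m_2>\frac{3}{\lambda}$ from Corollary~\ref{corollary:proof-Skoda-S-2}. The only real subtlety, and the step I would scrutinize most carefully, is verifying that the infinitely near points $P_1,P_2$ really lie on $\bar{Z}^1,\bar{Z}^2$ respectively with multiplicity exactly $1$; but this is guaranteed by the smoothness of $Z,Z^1,Z^2$ at $P,P_1,P_2$, which was already established in the construction of $Z^2$ as a smooth member of the pencil $|T_P^1+E_1|$ through $P_2$.
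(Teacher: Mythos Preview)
Your proposal is correct and follows essentially the same approach as the paper: assume $P\in\bar{Z}$, use smoothness of $Z$ at $P,P_1,P_2$ to conclude that $\bar{Z}^2$ passes through $P_2$, compute $\bar{Z}^2\cdot D^2=\bar{d}-m_0-m_1\geqslant m_2$, and contradict Corollary~\ref{corollary:proof-Skoda-S-2} via $\lambda\leqslant\frac{3}{d}$ from Lemma~\ref{lemma:proof-inequalities}(vi). The paper's proof is terser, simply writing $d-m_0-m_1>\bar{d}-m_0-m_1=\bar{Z}^2\cdot D^2\geqslant m_2$ and deducing $\lambda>\frac{3}{d}$, while you spell out the multiplicity-tracking and the bound $\bar{d}\leqslant d-1$ more explicitly; the content is the same.
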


\begin{proof}
Suppose that $P\in\overline{Z}$. Let us seek for a contradiction.
Denote by $\overline{Z}^2$ the proper transform of the curve $\overline{Z}$
on the surface $S_2$. Then
$$
d-m_0-m_1\geqslant\bar{d}-m_0-m_1=\overline{Z}^2\cdot D^2\geqslant m_2,
$$
which implies that $m_0+m_1+m_2\leqslant d$. One the other hand,
$m_0+m_1+m_2>\frac{3}{\lambda}$ by
\eqref{equation:proof-Skoda-S-2}.  This gives
$\lambda>\frac{3}{d}$, which is impossible by
Lemma~\ref{lemma:proof-inequalities}(vi).
\end{proof}

In particular, the curve $Z$ is reducible.
Denote by $\widehat{Z}$ its irreducible component that passes through $P$,
denote its proper transform on the surface $S_1$ by $\widehat{Z}^1$,
and denote its proper transform on the surface $S_2$ by $\widehat{Z}^2$.
Then $\overline{Z}\ne\widehat{Z}$,
$P_1\in\widehat{Z}^1$ and $P_2\in\widehat{Z}^2$. Denote by $\hat{d}$ the
degree of the curve $\widehat{Z}$ in $\mathbb{P}^3$. Then
$\hat{d}+\bar{d}\leqslant d$. Moreover, the intersection form of
the curves $\widehat{Z}$ and $\overline{Z}$ on the surface $S$ is given by

\begin{lemma}
\label{lemma:proof-Z-intersections} One has
$\overline{Z}\cdot\overline{Z}=-\bar{d}(d-\bar{d}-1)$,
$\widehat{Z}\cdot\widehat{Z}=-\hat{d}(d-\hat{d}-1)$ and
$\overline{Z}\cdot\widehat{Z}=\bar{d}\hat{d}$.
\end{lemma}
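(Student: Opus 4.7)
The plan is to mimic the proof of Lemma~\ref{lemma:proof-T-intersections} verbatim, once I observe that $Z$ is itself a hyperplane section of the surface $S$. Since $\mathrm{mult}_P(T_P) = 2$ by Lemma~\ref{lemma:proof-Tp-mult-2}, the proper transform satisfies $T_P^1 \sim f_1^*(H_S) - 2E_1$, hence $T_P^1 + E_1 \sim f_1^*(H_S) - E_1$. If $C$ is any hyperplane section of $S$ passing through $P$ and smooth at $P$, then its proper transform $C^1$ on $S_1$ satisfies $C^1 \sim f_1^*(H_S) - E_1 \sim T_P^1 + E_1$, which identifies $|T_P^1 + E_1|$ with the linear system of proper transforms of such hyperplane sections. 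Therefore $Z = f_1 \circ f_2(Z^2)$ is the hyperplane section of $S$ cut out by some plane $H \subset \mathbb{P}^3$.

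Since $H \cong \mathbb{P}^2$ and $Z$ lies in $H$, every irreducible component of $Z$ lies in $H$. In particular, $\bar{Z}$ and $\hat{Z}$ lie in $H$, and the intersection numbers on $S$ of any two distinct components agree with the intersection numbers in $H$, which are computed by B\'ezout in $\mathbb{P}^2$. This immediately yields
$$
\bar{Z}\cdot\hat{Z} = (\bar{Z}\cdot\hat{Z})_{H} = \bar{d}\,\hat{d}.
$$

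To obtain the self-intersections, write $Z = \bar{Z} + R$, where $R$ is the sum of all irreducible components of $Z$ other than $\bar{Z}$, so $\deg(R) = d - \bar{d}$. Using that each irreducible component $W$ of $R$ is distinct from $\bar{Z}$ and lies in $H$, B\'ezout gives $\bar{Z}\cdot R = \bar{d}(d-\bar{d})$. Combining this with
$$
\bar{d} = H_{S}\cdot \bar{Z} = Z\cdot\bar{Z} = \bar{Z}^{2} + \bar{Z}\cdot R,
$$
I conclude $\bar{Z}^{2} = \bar{d} - \bar{d}(d-\bar{d}) = -\bar{d}(d-\bar{d}-1)$. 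An identical calculation applied to $\hat{Z}$ gives $\hat{Z}^{2} = -\hat{d}(d-\hat{d}-1)$.

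The only non-routine point is the identification of $Z$ as a genuine hyperplane section of $S$, and this is already implicit in Remark~\ref{remark:proof-smooth-curves} (where $f_1\circ f_2(C)$ is called a hyperplane section through $P$ for any $C\in|T_P^1+E_1|$). I do not expect any real obstacle: once this identification is in place, the computation is purely formal and mirrors the proof of Lemma~\ref{lemma:proof-T-intersections} with the decomposition of $T_P$ replaced by the decomposition of $Z$.
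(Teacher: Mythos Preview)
Your proposal is correct and is exactly the approach the paper intends: the paper's own proof is simply ``See the proof of Lemma~\ref{lemma:proof-T-intersections}'', and you have carried out that argument with $T_P$ replaced by the hyperplane section $Z$. Your extra care in justifying that $Z$ is a genuine hyperplane section (via Remark~\ref{remark:proof-smooth-curves}) is a welcome clarification, but the substance is identical.
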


\begin{proof}
See the proof of Lemma~\ref{lemma:proof-T-intersections}.
\end{proof}

Put $D=a\widehat{Z}+\Omega$, where $a$ is a positive rational number,
and $\Omega$ is an effective $\mathbb{Q}$-divisor on the surface
$S$ whose support does not contain the curve $\widehat{Z}$. Denote by
$\Omega^1$ the proper transform of the divisor $\Omega$ on the
surface $S_1$, and denote by $\Omega^2$ the proper transform of
the divisor $\Omega$ on the surface $S_2$. Put
$n_0=\mathrm{mult}_{P}(\Omega)$,
$n_1=\mathrm{mult}_{P_1}(\Omega^1)$ and
$n_2=\mathrm{mult}_{P_2}(\Omega^2)$. Then $m_0=n_0+a$, $m_1=n_1+a$
and $m_2=n_2+a$. Then the log pair $(S_2, \lambda
a\widehat{Z}^2+\lambda\Omega^2+(\lambda(n_0+n_1+2a)-2)E_2)$ is not log
canonical at $P_2$, because  $(S_2, \lambda
D^2+(\lambda(m_0+m_1)-2)E_2)$ is not log canonical at $P_2$. Thus,
applying Theorem~\ref{theorem:adjunction}, we see that
$$
\lambda\Big(\Omega\cdot\widehat{Z}-n_0-n_1\Big)=\lambda\Omega^2\cdot
Z^2>1-\Big(\lambda\big(n_0+n_1+2a\big)-2\Big)=3-\lambda\big(n_0+n_1+2a\big),
$$
which implies that
\begin{equation}
\label{equation:proof-end-1}
\Omega\cdot\widehat{Z}>\frac{3}{\lambda}-2a.
\end{equation}
On the other hand, we have
$$
\bar{d}=D\cdot\overline{Z}=\Big(a\widehat{Z}+\Omega\Big)\cdot\overline{Z}\geqslant a\widehat{Z}\cdot\overline{Z}=a\hat{d}\bar{d}%
$$
by
Lemma~\ref{lemma:proof-Z-intersections}. This gives
\begin{equation}
\label{equation:proof-end-2} a\leqslant\frac{1}{\hat{d}}.
\end{equation}
Thus, it follows from \eqref{equation:proof-end-1},
\eqref{equation:proof-end-2} and
Lemma~\ref{lemma:proof-Z-intersections} that
$$
\frac{3}{\lambda}-2\leqslant\frac{3}{\lambda}-2a<\Omega\cdot\widehat{Z}=\hat{d}+a\hat{d}\Big(d-\hat{d}-1\Big)\leqslant d-1,%
$$
which implies that $\lambda>\frac{3}{d+1}$. Then $d\leqslant 4$ by
Lemma~\ref{lemma:proof-inequalities}(ii).

\begin{lemma}
\label{lemma:proof-end-quartic} One has $d\ne 4$.
\end{lemma}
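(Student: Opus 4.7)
The plan is to assume $d=4$ and derive a contradiction, continuing the case analysis on the degree $\hat{d}$ of $\hat{Z}$. Since $d=4$ forces $\lambda = \frac{5}{8}$ and $\frac{3}{\lambda} = \frac{24}{5}$, substituting into \eqref{equation:proof-end-1} gives $\hat{d} + a\hat{d}(3-\hat{d}) > \frac{24}{5} - 2a$. Combined with \eqref{equation:proof-end-2}, this instantly kills $\hat{d} = 2$ (which would need $a > \frac{7}{10}$ but $a \leqslant \frac{1}{2}$) and $\hat{d} = 3$ (which would need $a > \frac{9}{10}$ but $a \leqslant \frac{1}{3}$). Thus $\hat{d} = 1$, meaning $\hat{Z}$ is a line in $S$ through $P$ with $a \in (\tfrac{19}{20}, 1]$.

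Next I will establish that $P_1 \in \hat{Z}^1$. Since $Z$ is reducible but smooth at $P$, the curve $\hat{Z}$ is the unique irreducible component of $Z$ through $P$; the remaining components of $Z$ do not pass through $P$ and hence do not meet $E_1$ on $S_1$. Because $Z^2$ passes through $P_2 \in E_2$, the curve $Z^1 = f_2(Z^2)$ must meet $E_1$ at $P_1$, which forces $P_1 \in \hat{Z}^1$. In particular, $\hat{Z}^1$ and $E_1$ are both smooth at $P_1$ and intersect transversally there, and neither is contained in $\mathrm{Supp}(\Omega^1)$.

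The main step is then to apply Theorem~\ref{theorem:Trento} to the log pair $(S_1,\, \lambda a \hat{Z}^1 + (\lambda m_0 - 1) E_1 + \lambda \Omega^1)$ at $P_1$, which fails to be log canonical there. The multiplicity hypothesis $\lambda n_1 \leqslant 1$ will be verified from $n_1 \leqslant n_0 \leqslant m_0 - a$: Lemma~\ref{lemma:proof-mult-quartic-11-5} combined with $a > \frac{19}{20}$ gives $n_1 \leqslant \frac{11}{5} - \frac{19}{20} = \frac{5}{4} < \frac{8}{5} = \frac{1}{\lambda}$. Using $\hat{Z} \cdot \hat{Z} = -2$ and $\mathrm{mult}_P(\hat{Z}) = 1$, the relevant intersection numbers on $S_1$ are $\Omega^1 \cdot \hat{Z}^1 = 1 + 2a - n_0$ and $\Omega^1 \cdot E_1 = n_0$.

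Theorem~\ref{theorem:Trento} yields two alternatives. The first, $\lambda(1+2a-n_0) > 2(1-(\lambda m_0-1)) = 4 - 2\lambda(a+n_0)$, rearranges to $4a + n_0 > \frac{27}{5}$; this contradicts $4a + n_0 \leqslant 3a + m_0 \leqslant 3 + \frac{11}{5} = \frac{26}{5}$ (and likewise $\leqslant \frac{25}{5}$ in the remaining cases of Lemma~\ref{lemma:proof-mult-quartic-11-5}). The second, $\lambda n_0 > 2(1 - \lambda a)$, gives $n_0 > \frac{16}{5} - 2a$; combined with $n_0 \leqslant m_0 - a \leqslant \frac{11}{5} - a$ and $a \leqslant 1$, this forces $a > 1$, again a contradiction. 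The only delicate point is correctly assembling the log pair on $S_1$ and checking the multiplicity bound so that Theorem~\ref{theorem:Trento} applies; once that is in place, the rest is numerical bookkeeping.
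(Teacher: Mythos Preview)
Your argument is correct, and your treatment of $\hat{d}=2$ and $\hat{d}=3$ matches the paper exactly. Where you diverge is the case $\hat{d}=1$: the paper disposes of it in one line by invoking Lemma~\ref{lemma:proof-P1-Tp}. Any line on $S$ through $P$ is necessarily an irreducible component of $T_P$, so if $\hat{Z}$ were a line then $\hat{Z}^1$ would be a component of $T_P^1$; but $P_1\in\hat{Z}^1$ was already established before the lemma, and Lemma~\ref{lemma:proof-P1-Tp} says $P_1\notin T_P^1$, an immediate contradiction. Your route instead pushes the $\hat{d}=1$ case through a full application of Theorem~\ref{theorem:Trento} on $S_1$, together with the numerical bound of Lemma~\ref{lemma:proof-mult-quartic-11-5}. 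This works (your verification of the hypothesis $\lambda n_1\leqslant 1$ via $n_1\leqslant n_0\leqslant m_0-a$ and the two Trento branches are both sound), but it is considerably more labor than the paper's observation, and the fact $P_1\in\hat{Z}^1$ that you take the trouble to re-derive is already recorded in the setup preceding the lemma. In short: same destination, but you missed the shortcut that Lemma~\ref{lemma:proof-P1-Tp} already rules out $\hat{Z}$ being a line.
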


\begin{proof}
Suppose that $d=4$. Then $\lambda=\frac{5}{8}$ and $\hat{d}\leqslant 3$. By
Lemma~\ref{lemma:proof-P1-Tp}, $\widehat{Z}$ is not a line, since
every line passing through $P$ must be an irreducible component of
the curve $T_P$. Thus, either $\widehat{Z}$ is a conic or $\widehat{Z}$ is
a plane cubic curve. If $\widehat{Z}$ is a conic, then $\widehat{Z}^2=-2$
and $a\leqslant\frac{1}{2}$ by \eqref{equation:proof-end-2}. Thus,
if $\widehat{Z}$ is a conic, then
$$
2+2a=\Omega\cdot\widehat{Z}>\frac{3}{\lambda}-2a=\frac{24}{5}-2a,
$$
which implies that $\frac{1}{2}\geqslant a>\frac{7}{10}$. This
shows that $\widehat{Z}$ is a plane cubic curve. Then $\widehat{Z}^2=0$.
Since $a\leqslant\frac{1}{3}$ by \eqref{equation:proof-end-2}, we have
$$
3=\Omega\cdot\widehat{Z}>\frac{3}{\lambda}-2a=\frac{24}{5}-2a\geqslant \frac{24}{5}-\frac{2}{3}=\frac{62}{15},%
$$ which is absurd.
\end{proof}

Thus, we see that $d=3$. Then $\widehat{Z}$ us either a line or a conic.
But every line passing through $P$ must be an irreducible component of $T_P$. Since
$\widehat{Z}$ is not an irreducible component of $T_P$ by
Lemma~\ref{lemma:proof-P1-Tp}, the curve $\widehat{Z}$ must be a
conic. Then $\widehat{Z}^2=0$.
Therefore, it follows from \eqref{equation:proof-end-1} that
$$
3-2a=\frac{3}{\lambda}-2a<\Omega\cdot\widehat{Z}=\hat{d}+a\hat{d}\Big(d-\hat{d}-1\Big)=\hat{d}=2,%
$$
which implies that $a>\frac{1}{2}$. But
$a\leqslant\frac{1}{\hat{d}}=\frac{1}{2}$ by
\eqref{equation:proof-end-2}. The obtained contradiction completes
the proof of Theorem~\ref{theorem:main}.

\end{document}